\newtheorem{theorem}{Theorem}[section]
\newtheorem{lemma}[theorem]{Lemma}
\newtheorem{proposition}[theorem]{Proposition}
\newtheorem{corollary}[theorem]{Corollary}
\theoremstyle{definition}
\newtheorem{definition}[theorem]{Definition}
\newtheorem{example}[theorem]{Example}
\theoremstyle{remark}
\numberwithin{equation}{section}
\newcommand{\R}{{\mathbb R}}
\newcommand{\Z}{{\mathbb Z}}
\newcommand{\C}{{\mathbb C}}
\newcommand{\N}{{\mathbb N}}
\newcommand{\T}{{\mathbb T}}
\newcommand{\supp}{\operatorname{supp}}
\newcommand{\ord}{\operatorname{ord}}
\newcommand{\WF}{\operatorname{WF}}
\newcommand{\inj}{\operatorname{inj}}
\newcommand{\esssupp}{\operatorname{ess\, supp}}
\newcommand{\coker}{\operatorname{coker}}
\newcommand{\area}{\operatorname{area}}
\newcommand{\vol}{\operatorname{vol}}
\title{Triangles and triple products of Laplace eigenfunctions}
\author{Emmett L. Wyman}
\thanks{This work is supported in part by NSF Grant DMS-1502632.}
\begin{document}

\maketitle


\begin{abstract}
Consider an $L^2$-normalized Laplace-Beltrami eigenfunction $e_\lambda$ on a compact, boundary-less Riemannian manifold with $\Delta e_\lambda = -\lambda^2 e_\lambda$. We study eigenfunction triple products
\[
	\langle e_\lambda e_\mu, e_\nu \rangle = \int e_\lambda e_\mu \overline{e_\nu} \, dV.
\]
We show the overall $\ell^2$-concentration of these triple products is determined by the measure of some set of configurations of triangles with side lengths equal to the frequencies $\lambda,\mu,$ and $\nu$. A rapidly vanishing proportion of this mass lies in the `classically forbidden' regime where $\lambda, \mu,$ and $\nu$ fail to satisfy the triangle inequality. As a consequence, we refine one result in a paper by Lu, Sogge, and Steinerberger \cite{LuSoggeSteinerberger}.
\end{abstract}


\section{Introduction} 

\subsection{The problem}

In what follows, $M$ will be a compact Riemannian manifold without boundary, and $e_1, e_2, \ldots$ will constitute an orthonormal basis of Laplace-Beltrami eigenfunctions with
\[
	\Delta e_j = -\lambda_j^2 e_j.
\]
We refer to $\lambda_j$, rather than $-\lambda_j^2$, as the eigenvalue or frequency of $e_j$.

We are interested in the general behavior of the product of two eigenfunctions. Rarely is the product of two eigenfunctions again an eigenfunction, but the product does admit a harmonic expansion
\begin{equation}\label{harmonic expansion}
	e_i e_j = \sum_k \langle e_i e_j, e_k \rangle e_k
\end{equation}
as a sum of eigenfunctions $e_k$. We are concerned with the following broad question: \emph{At what frequencies is the bulk of the $L^2$ mass of the product of two eigenfunctions located?} In other words, we want to determine for which frequencies $\lambda_k$ the coefficients $\langle e_i e_j, e_k \rangle$ must be large or small. 

Like many spectral problems, a form of this question was originally studied in number theory. There, the manifold is compact (or finite area cusped) and has constant sectional curvature $-1$. The problem was to show the coefficients $\langle e_j^2, e_k \rangle$ decay exponentially to order $\exp(-\pi \lambda_k/2)$, for fixed $e_j$. Such bounds are related to the Lindel\"of hypothesis for Rankin-Selberg zeta functions. The desired decay was obtained by Sarnak \cite{Sarnak-Integrals} up to a power loss, and optimal decay was later obtained by Bernstein and Reznikov \cite{Bernstein-Reznikov} and Kr\"otz and Stanton \cite{Krotz-Stanton}. Sarnak's result was extended to real-analytic manifolds by Zelditch \cite{Zpluri}. 

This question has gained some recent interest in the (non-analytic) Riemannian setting \cite{Lu-Steinerberger, LuSoggeSteinerberger, Steinerberger}. Recent work is motivated in part by numerical applications, particularly to fast algorithms for electronic structure computing \cite{LuYing} which require most of the $L^2$ mass of $e_i e_j$ to be spanned by comparatively few basis eigenfunctions. This is quantified in work by Lu, Sogge, and Steinerberger \cite{Lu-Steinerberger,LuSoggeSteinerberger}, which among other things yields the following: For all $\epsilon > 0$ and integers $N \geq 1$, there exist constants $C_{\epsilon,N}$ for which
\begin{equation}\label{Lu Sogge Steinerberger}
	\sum_{\lambda_k \geq \lambda^{1 + \epsilon}} |\langle e_i e_j, e_k \rangle|^2 \leq C_{\epsilon, N} \lambda^{-N} \qquad \text{ for all $i$, $j$ for which $\lambda_i, \lambda_j \leq \lambda$.}
\end{equation}
This result shows only a rapidly-vanishing proportion of the $L^2$ mass of the product $e_i e_j$ lies at frequencies above $\lambda^{1 + \epsilon}$ if $\lambda_i, \lambda_j \leq \lambda$. In the present paper, we are able to replace $\lambda^{1 + \epsilon}$ under the sum with $(2 + \epsilon) \lambda$ and obtain the same bounds (see Corollary \ref{corollary}). Furthermore, the $(2 + \epsilon) \lambda$ factor is nearly optimal in the sense that there are examples for which the bound fails if the sum is instead over $\lambda_k \geq (2 - \epsilon) \lambda$ (see Example \ref{torus example}).

In a recent paper \cite{Steinerberger}, Steinerberger introduces a local correlation functional which quantifies the degree to which the oscillations of two eigenfunctions are parallel or transversal. He provides a beautiful identity between the local correlation functional and the heat evolution of the product $e_i e_j$, and uses this relationship to deduce information about where the spectral mass of $e_i e_j$ is concentrated. His results are quite general and also apply to graph Laplacians.


\subsection{Triple products and counting triangles}

Instead of taking the triple products $\langle e_i e_j, e_k \rangle$ individually, we study the sums
\begin{equation}\label{single sums}
	\sum_{(\lambda_i, \lambda_j, \lambda_k) = (\lambda, \mu, \nu)} |\langle e_i e_j, e_k \rangle|^2
\end{equation}
for given eigenvalues $\lambda, \mu$, and $\nu$. These sums are independent of the choice of eigenbasis and hence have behavior that is determined solely by the underlying manifold and its metric.

The main point of the present paper is to show the sum \eqref{single sums} can be estimated by counting configurations of triangles with side lengths $\lambda$, $\mu$, and $\nu$. This is best illustrated in the example on the torus.

\begin{example}\label{torus example}
Let $\T^n = \R^n/2\pi \Z^n$ denote the flat torus. The standard basis of eigenfunctions consists of Fourier exponentials of the form
\[
	(2\pi)^{-n/2} e^{i\langle x, m \rangle} \qquad \text{ for $m \in \Z^n$}
\]
with corresponding eigenvalue $|m|$. For $m, j, k \in \Z^n$, the triple product reads
\[
	\langle e_m e_j, e_k \rangle = (2\pi)^{-3n/2} \int_{\R^n/2\pi \Z^n} e^{i\langle x, m + j - k \rangle} \, dx =
	\begin{cases}
		(2\pi)^{-n/2} & \text{if } m + j - k = 0, \\
		0 & \text{otherwise.}
	\end{cases}
\]
The sum \eqref{single sums} is then
\[
	(2\pi)^{-n/2} \#\{ (m,j) \in \Z^{n + n} : |m| = \lambda, \ |j| = \mu, \ |m + j| = \nu \}.
\]
Each element in this set corresponds to a triangle with one vertex at the origin, the other two vertices at integer lattice points, and with side lengths $\lambda$, $\mu$, and $\nu$ (see Figure \ref{torusfigure}). If, for example, $\lambda, \mu,$ and $\nu$ fail to satisfy the triangle inequality, there are no triangles to count and \eqref{single sums} is zero. Obtaining optimal estimates for \eqref{single sums} in this example is highly nontrivial and closely tied to problems in geometric combinatorics and number theory (e.g. \cite{Ziegler}).
\end{example}

\begin{figure}
\includegraphics[width=0.5\textwidth]{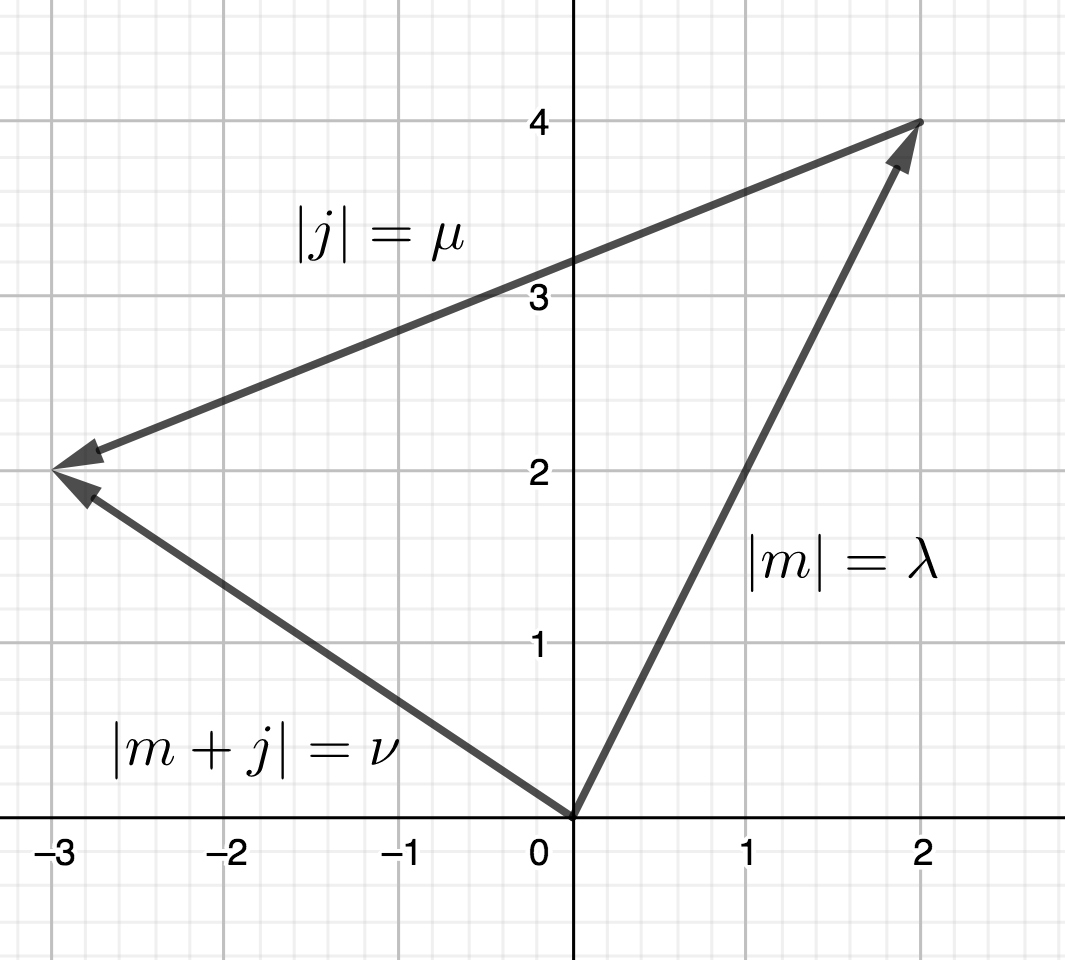}
\caption{An example of one of the triangular configurations in Example \ref{torus example}.}
\label{torusfigure}
\end{figure}

The natural next question is this: Is there a relationship between the sum \eqref{single sums} and the count of  configurations of triangles for manifolds in general? We answer this question by applying the theory of Fourier integral operators to an averaged version of \eqref{single sums}, where an estimation of the count of such configurations of triangles will arise in the asymptotics. Our argument also shows that \eqref{single sums} vanishes rapidly in the `classically forbidden' regime where $\lambda$, $\mu$, and $\nu$ fail to satisfy the triangle inequality.


\subsection{Statement of results} \label{STATEMENT OF RESULTS}

As before, $M$ denotes a compact Riemannian manifold without boundary, and $e_1, e_2, \ldots$ will constitute an orthonormal basis of Laplace-Beltrami eigenfunctions. Furthermore, we use $n$ to denote the dimension of $M$. Here $n$ will always be assumed to be $2$ or more, since the one-dimensional case is well-understood.

Our main theorems concern the general distribution of a joint spectral measure
\begin{equation}\label{joint spectral measure}
	\mu = \sum_{i,j,k} |\langle e_i e_j, e_k\rangle|^2 \delta_{(\lambda_i,\lambda_j,\lambda_k)}
\end{equation}
on $\R^3$ weighted by the norm-squares of the coefficients $\langle e_i e_j, e_k\rangle$. For the sake of organization, we introduce some terminology.

\begin{definition} \label{triangle-good and trangle-bad} Let $\Gamma$ be a closed cone in $\R^3 \setminus 0$. We say that $\Gamma$ is \emph{triangle-good} if for each $(a,b,c) \in \Gamma$, we have
\[
	a < b + c, \quad b < a + c, \quad \text{ and } \quad c < a + b.
\]
We say $\Gamma$ is \emph{triangle-bad} if for each $(a,b,c) \in \Gamma$, at least one of
\[
	a > b + c, \quad b > a + c, \quad \text{ or } \quad c > a + b
\]
holds.
\end{definition}

Triangle-good cones contain points whose coordinates are realizable as the side lengths of a triangle in the plane, and triangle-bad cones contain no such points. Note, a triangle-good cone necessarily lies in the positive octant of $\R^3 \setminus 0$. Furthermore, both definitions for triangle-good and triangle-bad cones exclude points $(a,b,c)$ specifying the side lengths of a degenerate triangle, i.e. when $a = b + c$ or some permutation thereof holds. These degenerate cases are geometrically problematic, so we cut them away.

Our first theorem shows that all but a rapidly-vanishing proportion of the mass of $\mu$ occurs at points $(\lambda_i,\lambda_j,\lambda_k)$ whose coordinates are realizable as the sidelengths of a triangle.

\begin{theorem}\label{main theorem triangle-bad}
	Let $\Gamma$ be a triangle-bad cone. Then for every integer $N \geq 0$, there exists a constant $C_{\Gamma,N}$ for which
	\[
		\mu(\tau + [0,1]^3) = \sum_{(\lambda_i,\lambda_j,\lambda_k) \in \tau + [0,1]^3} |\langle e_i e_j, e_k \rangle|^2 \leq C_{\Gamma,N} |\tau|^{-N} \qquad \text{ for $\tau \in \Gamma$},
	\]
	where $\tau + [0,1]^3$ denotes the translated unit cube $[\tau_1, \tau_1 + 1] \times [\tau_2, \tau_2 + 1] \times [\tau_3, \tau_3 + 1]$.
\end{theorem}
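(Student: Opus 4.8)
The plan is to realize the sum $\mu(\tau+[0,1]^3)$ as a trace of a composition of Fourier integral operators built from spectral cutoffs, and then to use a non-stationary phase argument exploiting the fact that on a triangle-bad cone the three frequencies cannot close up into a geodesic triangle. First I would fix a smooth bump $\chi \in C_c^\infty(\mathbb{R})$ with $\chi \geq 0$, $\chi \equiv 1$ on $[0,1]$, and $\widehat{\chi}$ supported in a small neighborhood of the origin, and form, for a point $\tau = (\tau_1,\tau_2,\tau_3)$, the operators $\chi(\tau_\ell - \sqrt{-\Delta})$ via the spectral theorem. The key observation is that
\[
	\sum_{(\lambda_i,\lambda_j,\lambda_k)\in\tau+[0,1]^3} |\langle e_i e_j, e_k\rangle|^2 \leq \sum_{i,j,k} \chi(\tau_1-\lambda_i)\chi(\tau_2-\lambda_j)\chi(\tau_3-\lambda_k)\,|\langle e_i e_j, e_k\rangle|^2,
\]
and the right-hand side can be written as an integral over $M\times M$ of the Schwartz kernels of the three spectral multipliers, paired against each other; concretely it becomes
\[
	\iint_{M\times M} \overline{K_1(x,y)}\,K_2(x,y)\,\big(\text{kernel built from }\chi(\tau_3-\sqrt{-\Delta})\big)(x,x)\cdots
\]
after expanding each $\chi(\tau_\ell - \sqrt{-\Delta})$ by Fourier inversion as $\int \widehat{\chi}(t_\ell) e^{i\tau_\ell t_\ell} e^{-it_\ell\sqrt{-\Delta}}\,dt_\ell$, so that the whole expression is an oscillatory integral in $(t_1,t_2,t_3)$ with the half-wave propagators $e^{-it_\ell \sqrt{-\Delta}}$ appearing. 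I would then invoke the Lax–Hörmander parametrix for $e^{-it\sqrt{-\Delta}}$, valid for $|t|$ small (which is all we need since $\widehat\chi$ is supported near $0$), writing each propagator as an FIO with phase $\phi_\ell(t_\ell,x,y,\xi)$ homogeneous of degree one in $\xi$ and satisfying the eikonal equation, with $d_y\phi_\ell = -\xi$ and $d_x\phi_\ell$ a unit covector generating the geodesic flow.

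Next I would collect all the phase functions and all the frequency variables into a single large oscillatory integral whose total phase $\Psi$ depends on the spatial variables on $M\times M$, the time variables $t_1,t_2,t_3$, and three cotangent frequency variables $\xi,\eta,\zeta$, together with the linear terms $\tau_1 t_1 + \tau_2 t_2 + \tau_3 t_3$. The stationary points in the spatial and time variables force a geometric configuration: differentiating in the spatial variables makes the three unit covectors emanating from the common point sum to zero (this is exactly the condition that there is a geodesic triangle, degenerating at $t=0$ to three covectors summing to zero), and differentiating in $t_\ell$ equates $\tau_\ell$ with $|\xi|$, $|\eta|$, $|\zeta|$ respectively — i.e. forces the homogeneity parameters to equal the components of $\tau$. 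Therefore at any stationary point we would need $\tau_1,\tau_2,\tau_3$ to be the lengths of three vectors summing to zero, which for $\tau$ in a triangle-bad cone is impossible: the closed cone condition $a>b+c$ (or a permutation) persists, so the gradient of $\Psi$ is bounded below by $c|\tau|$ on the support of the amplitude. Then repeated integration by parts in the appropriate variable — here I would use the standard non-stationary phase lemma with the large parameter $|\tau|$, taking care that the amplitude's symbol bounds grow only polynomially under differentiation — yields the gain of $|\tau|^{-N}$ for every $N$, with constants depending on $\Gamma$ through the lower bound $c = c(\Gamma)$ and on $N$.

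The main obstacle, and the step requiring the most care, is the bookkeeping around the frequency variables: the propagators are FIOs whose symbols are of order zero but the composition involves an integral over three copies of $\mathbb{R}^n$ in $\xi,\eta,\zeta$ that does not converge absolutely, so the non-stationary phase argument must be run \emph{before} or \emph{simultaneously with} taming these integrals — typically by first integrating by parts in the \emph{spatial/time} variables to extract all the decay, while controlling the $\xi,\eta,\zeta$ integrals by the symbol calculus (each integration by parts lowers the symbol order, and after enough steps the integrals converge and one is left with an explicitly $O(|\tau|^{-N})$ bound). A clean way to organize this is to rescale $\xi = |\tau|\,\tilde\xi$ etc.\ so that $|\tau|$ becomes a genuine large parameter multiplying the entire phase, reducing everything to a uniform stationary phase estimate on a compact set of $(\tilde\xi,\tilde\eta,\tilde\zeta)$ after a dyadic decomposition in the frequency magnitudes; the triangle-bad hypothesis guarantees the rescaled phase has no critical points on the relevant region, uniformly, and the estimate follows. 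One should also confirm that the contribution of frequencies $|\xi|,|\eta|,|\zeta|$ far from the scale $|\tau|$ is negligible, which again follows from the $t_\ell$-derivative of the phase being large there.
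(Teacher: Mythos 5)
Your proposal captures the same geometric mechanism that drives the paper's argument: the only configurations producing a stationary contribution are those where the three cotangent vectors sum to zero, hence where the lengths $\tau_1,\tau_2,\tau_3$ satisfy the (non-strict) triangle inequality, and the triangle-bad hypothesis forbids this with a quantitative gap. But the execution is genuinely different. The paper never writes down an explicit phase: it applies a pseudodifferential cutoff $A$ microsupported in a triangle-bad cone to one copy of $\delta_\Delta$, and observes that since $\WF(\delta_\Delta) \subset \dot N^*\Delta = \{\xi_1 + \xi_2 + \xi_3 = 0\}$ — a set whose image under $(x,\xi)\mapsto(|\xi_1|,|\xi_2|,|\xi_3|)$ satisfies the triangle inequality — the intersection $\esssupp A \cap \WF(\delta_\Delta)$ is empty, so $A\delta_\Delta$ is smooth; the wavefront-set composition calculus then gives smoothness of $\check\mu_a$ essentially for free. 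Your route unpacks this soft argument into an explicit oscillatory integral built from the three half-wave parametrices and runs non-stationary phase directly, paying for that concreteness with the rescaling and dyadic-localization bookkeeping you describe. Both are valid. The paper's version is shorter because the hard analysis is absorbed into the FIO machinery (and it delivers smoothness, hence $O(|\tau|^{-\infty})$, without tracking constants); your version is more elementary and self-contained for a reader not invested in the clean-composition calculus, but you would need to actually carry out the symbol-order accounting you gesture at, which the paper's abstract formulation avoids.

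There is one genuine error in the setup that must be fixed before the first inequality in your chain is true. A function $\chi \in C_c^\infty(\R)$ with $\chi \equiv 1$ on $[0,1]$ cannot have $\widehat\chi$ compactly supported: by Paley--Wiener, compact support of $\chi$ makes $\widehat\chi$ entire, and compact support of $\widehat\chi$ makes $\chi$ entire, so the only function with both properties is zero. What you want is $\chi$ Schwartz-class (not compactly supported) with $\widehat\chi$ compactly supported near the origin, $\chi \geq 0$, and $\chi \geq 1$ on the relevant interval; note also the sign issue, since with the convention $\chi(\tau_\ell - \lambda_j)$ and $\lambda_j \in [\tau_\ell, \tau_\ell + 1]$ the argument lives in $[-1,0]$, so you need $\chi \geq 1$ there rather than $\chi \equiv 1$ on $[0,1]$. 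Such $\chi$ exists and this is precisely the $\rho$ the paper chooses. The remainder of your plan does not depend on the impossible properties, so this is a repair of hypotheses rather than a collapse of the argument, but as written the domination step fails.
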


For the torus (Example \ref{torus example}), all terms in the sum are zero for $|\tau|$ large enough, and so Theorem \ref{main theorem triangle-bad} holds trivially. What is interesting is that this behavior for the torus persists for general manifolds up to the presence of a thin tail. In some cases, e.g. compact hyperbolic manifolds \cite{Sarnak-Integrals}, these tails are present.

As a corollary to Theorem \ref{main theorem triangle-bad}, we are able to obtain a refinement of the bound in \eqref{Lu Sogge Steinerberger}.

\begin{corollary}\label{corollary}
	For each $\epsilon > 0$ and integer $N > 0$, there exist constants $C_{\epsilon,N}$ for which
	\[
		\sum_{\lambda_k \geq (1 + \epsilon)(\tau_1 + \tau_2)} |\langle e_i e_j, e_k \rangle|^2 \leq C_{\epsilon,N} (\tau_1 + \tau_2)^{-N} \qquad \text{ for } \lambda_i \leq \tau_1 \text{ and } \lambda_j \leq \tau_2. 
	\]
\end{corollary}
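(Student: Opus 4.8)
The plan is to derive Corollary~\ref{corollary} from Theorem~\ref{main theorem triangle-bad} by covering the region of frequency space implicated in the sum with finitely many triangle-bad cones and then summing the resulting estimates. The key observation is that if $\lambda_i \leq \tau_1$, $\lambda_j \leq \tau_2$, and $\lambda_k \geq (1+\epsilon)(\tau_1 + \tau_2)$, then the triple $(\lambda_i,\lambda_j,\lambda_k)$ violates the triangle inequality in the third slot: indeed $\lambda_i + \lambda_j \leq \tau_1 + \tau_2 < (1 + \epsilon)(\tau_1+\tau_2) \leq \lambda_k$. So the entire sum is supported on triples lying in the set $\{(a,b,c) : 0 \le a \le \tau_1,\ 0 \le b \le \tau_2,\ c \ge (1+\epsilon)(\tau_1+\tau_2)\}$, and every such triple has $c > a + b$, which is exactly the triangle-bad condition.

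The first step is to reduce to a cone. Fix $\epsilon > 0$. I would introduce the closed cone
\[
	\Gamma_\epsilon = \{ (a,b,c) \in \R^3 \setminus 0 : a \ge 0,\ b \ge 0,\ c \ge (1 + \tfrac{\epsilon}{2})(a + b),\ c \le 100(a+b) \}
\]
(the upper bound $c \le 100(a+b)$, or any fixed linear bound, makes $\Gamma_\epsilon$ a genuine cone away from the degenerate boundary; one checks it is closed in $\R^3\setminus 0$). For any $(a,b,c) \in \Gamma_\epsilon$ we have $c \ge (1+\tfrac\epsilon2)(a+b) > a + b$, so $\Gamma_\epsilon$ is triangle-bad. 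The one subtlety is that the constraint $\lambda_k \ge (1+\epsilon)(\tau_1+\tau_2)$ does not a priori bound $\lambda_k$ from above, so a single translated unit cube based in $\Gamma_\epsilon$ does not capture the whole tail of large $\lambda_k$. I would handle this by slicing: write the sum over $\lambda_k \ge (1+\epsilon)(\tau_1+\tau_2)$ as a sum over dyadic (or unit) shells in $\lambda_k$, apply Theorem~\ref{main theorem triangle-bad} on each shell with a cone tailored to that shell, and sum a convergent series. Concretely, for each integer $m \ge 0$ consider the contribution from $\lambda_k \in [(1+\epsilon)(\tau_1+\tau_2) + m,\ (1+\epsilon)(\tau_1+\tau_2)+m+1)$ together with $\lambda_i \in [0,\tau_1]$, $\lambda_j \in [0,\tau_2]$. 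Cover this slab by $O((\tau_1+1)(\tau_2+1))$ translated unit cubes $\tau^{(\ell)} + [0,1]^3$ with each base point $\tau^{(\ell)}$ lying in a fixed triangle-bad cone $\Gamma_\epsilon$ (enlarging $\Gamma_\epsilon$ if necessary so that all these base points, whose third coordinate ranges up to $(1+\epsilon)(\tau_1+\tau_2)+m$ while the first two are at most $\tau_1,\tau_2$, actually lie in it — this is where the linear upper bound in the cone would need to be replaced by allowing $c$ arbitrarily large relative to $a+b$, which is still fine since $c > a+b$ is all that is needed for triangle-badness, and one checks the cone $\{a,b\ge 0,\ c \ge (1+\tfrac\epsilon2)(a+b)\}\setminus 0$ is already closed and triangle-bad). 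Then $|\tau^{(\ell)}| \gtrsim (1+\epsilon)(\tau_1+\tau_2) + m \gtrsim (\tau_1+\tau_2) + m$, so Theorem~\ref{main theorem triangle-bad} with exponent $N' = N + 3$ gives each cube a bound $C ((\tau_1+\tau_2)+m)^{-N-3}$.

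The final step is to assemble the pieces: summing over the $O((\tau_1+1)(\tau_2+1))$ cubes in a fixed slab and then over $m \ge 0$,
\[
	\sum_{\lambda_k \ge (1+\epsilon)(\tau_1+\tau_2)} |\langle e_i e_j, e_k\rangle|^2 \le C (\tau_1+1)(\tau_2+1) \sum_{m \ge 0} ((\tau_1+\tau_2) + m)^{-N-3} \le C' (\tau_1+\tau_2)^{-N},
\]
where the $m$-series converges and contributes a factor $O((\tau_1+\tau_2)^{-N-2})$, which absorbs the $(\tau_1+1)(\tau_2+1) = O((\tau_1+\tau_2)^2)$ prefactor. (One should also dispose of the trivial range where $\tau_1 + \tau_2$ is bounded, where the estimate holds by enlarging the constant, since $\mu$ is locally finite.) The main obstacle I anticipate is bookkeeping rather than anything deep: one must be careful that the constant in Theorem~\ref{main theorem triangle-bad} depends only on the cone and $N$, and that a single triangle-bad cone can be chosen to contain all the unit-cube base points arising for every $\tau_1,\tau_2$ and every shell $m$ — this works precisely because the condition $\lambda_k > \lambda_i + \lambda_j$ is scale-invariant and defines an honest closed cone once the degenerate boundary is excluded, so the loss from $(1+\epsilon)$ in the hypothesis to $(1+\tfrac\epsilon2)$ in the cone gives the needed room.
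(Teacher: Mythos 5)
Your proposal is correct and follows essentially the same strategy as the paper: pick a triangle-bad cone of the form $\{c \geq (1+\delta)(a+b)\}$, slice the tail in $\lambda_k$ into unit shells, apply Theorem~\ref{main theorem triangle-bad} on each shell, and sum the resulting rapidly-decaying bounds. The one place you overcomplicate is the covering of each shell by $O((\tau_1+1)(\tau_2+1))$ unit cubes and the consequent bump to $N' = N+3$: since the indices $i$ and $j$ in the corollary are \emph{fixed}, the pair $(\lambda_i,\lambda_j)$ is a single point, and the entire shell contribution $\sum_{\lambda_k \in [T_m,T_m+1)} |\langle e_i e_j, e_k\rangle|^2$ is already captured by the single cube with base point $(\lambda_i,\lambda_j,T_m)$, which lies in the cone because $T_m \geq (1+\epsilon)(\tau_1+\tau_2) \geq (1+\epsilon)(\lambda_i+\lambda_j)$ and has $|\,(\lambda_i,\lambda_j,T_m)\,| \geq \tau_1+\tau_2+m$. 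With one cube per shell the $m$-sum of $(\tau_1+\tau_2+m)^{-N}$ already gains a power, there is no slab-covering prefactor to absorb, and you recover the paper's shorter computation verbatim.
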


The proof is very short, so we include it here.

\begin{proof}[Proof of Corollary \ref{corollary}] We apply Theorem \ref{main theorem triangle-bad} to obtain bounds
\[
	\sum_{(\lambda_i,\lambda_j,\lambda_k) \in \tau + [0,1]^3} |\langle e_i e_j, e_k \rangle|^2 \leq C_{\Gamma_\epsilon, N} |\tau|^{-N} \qquad \text{ for } \tau \in \Gamma_\epsilon
\]
for the triangle-bad cone
\[
	\Gamma_\epsilon = \{(\tau_1,\tau_2,\tau_3) : \tau_3 \geq (1 + \epsilon)(\tau_1 + \tau_2)\}.
\]
Set $N \geq 2$ and $T_m = (1 + \epsilon)(\tau_1 + \tau_2) + m$. The sum in the corollary is then bounded above by
\begin{align*}
	\sum_{m = 0}^\infty \sum_{\lambda_k \in [T_m,T_m+1]} |\langle e_i e_j, e_k \rangle|^2 &\leq \sum_{m = 0}^\infty C_{\Gamma_\epsilon, N} (\tau_1 + \tau_2 + m)^{-N}\\
	&\leq \frac{1}{N-1} C_{\Gamma_\epsilon,N} (\tau_1 + \tau_2 - 1)^{-N+1}.
\end{align*}
The corollary follows.
\end{proof}

Our second theorem describes the mass of $\mu$ which falls within triangle-good cones, and this is where we will see a count of configurations of triangles appear. Before stating the result, we recall the definition of the Leray density and some of its properties. Let $F : \R^n \to \R^d$ be a smooth function and let $y \in \R^d$ be such that $dF$ has full rank at each point in the preimage $F^{-1}(y)$. By the implicit function theorem, $F^{-1}(y)$ is a smooth $(n-d)$-dimensional manifold and admits a local parametrization by $z$ in a neighborhood of $\R^{n-d}$. Complete $z$ to a coordinate chart $(z,z')$ of a neighborhood in $\R^n$. The \emph{Leray density} on $F^{-1}(y)$ is given by
\begin{equation}\label{Leray density definition}
	d_F = \frac{|dx_1 \wedge \cdots \wedge dx_n|}{|dF_1 \wedge \cdots \wedge dF_d|}  = \left|\det \frac{\partial F}{\partial z'}\right|^{-1} \, dz,
\end{equation}
and is independent of the choice of the complementary coordinates $z'$. If $f$ is continuous on a neighborhood of $F^{-1}(y)$, we can identify $d_F$ with the distribution $\delta(F(x) - y)$ supported on $F^{-1}(y)$ by writing
\[
	\int_{F^{-1}(y)} f \, d_F = \int_{\R^n} f(x) \delta(F(x) - y) \, dx.
\]
The \emph{Leray measure} of $F^{-1}(y)$ is defined as
\[
	\vol F^{-1}(y) = \int_{F^{-1}(y)} d_F.
\]
Note, the Leray measure need not coincide with the restriction of the Euclidean measure.

Here and throughout the rest of the paper, we set $F : \R^{n+n} \to \R^3$ as
\begin{equation}\label{def F}
	 F(\xi,\eta) = (|\xi|,|\eta|,|\xi + \eta|).
\end{equation}
The level set $F^{-1}(\tau)$ is the set of pairs $(\xi,\eta)$ which specify a triangle of side lengths $\tau_1,\tau_2$ and $\tau_3$, much like in Example \ref{torus example}. Finally, we use the normalization
\[
	\hat f(\xi) = \int_{\R^n} e^{-i\langle x, \xi \rangle} f(x) \, dx \qquad \text{ and } \qquad f(x) = (2\pi)^{-n} \int_{\R^n} e^{i\langle x, \xi \rangle} \hat f(\xi) \, d\xi
\]
for the Fourier transform and Fourier inversion formula, and also use $\check f$ to denote the Fourier inversion of $f$. The next two statements provide a description of the concentration of $\mu$ in triangle-good cones.

\begin{theorem}\label{main theorem triangle-good}
Let $\rho$ be a Schwartz-class function on $\R^3$ with $\int \rho = 1$ and for which $\supp \hat \rho$ is contained in the open cube $(-\inj M, \inj M)^3$, and let $\Gamma$ be a triangle-good cone. Then for $\tau = (\tau_1, \tau_2, \tau_3) \in \Gamma$,
\[
	\rho * \mu(\tau) = (2\pi)^{-2n} \vol M \vol F^{-1}(\tau) + O(|\tau|^{2n-4}),
\]
where the constants implicit in the big-$O$ notation depend on $M$, $\Gamma$, and $\rho$ but not $\tau$.
\end{theorem}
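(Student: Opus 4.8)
The plan is to realize $\rho * \mu(\tau)$ as a quantity we can compute with the semiclassical/Fourier integral calculus by writing the coefficients $\langle e_i e_j, e_k\rangle$ in terms of the spectral projectors of $M$ and inserting suitable frequency cutoffs. Writing $\rho * \mu(\tau) = \sum_{i,j,k} |\langle e_i e_j, e_k\rangle|^2 \rho(\tau - (\lambda_i,\lambda_j,\lambda_k))$, I would use $\hat\rho$ supported in $(-\inj M,\inj M)^3$ to express $\rho(\tau - (\lambda_i,\lambda_j,\lambda_k))$ as an oscillatory integral over $t = (t_1,t_2,t_3)$ with $|t_i| < \inj M$, turning the sum into a triple composition of half-wave operators $e^{it_1\sqrt{-\Delta}}$, $e^{it_2\sqrt{-\Delta}}$, $e^{it_3\sqrt{-\Delta}}$ acting on the appropriate factors; the point of the injectivity-radius condition is that on that time interval the wave kernels are given by single-phase Lax parametrices with no conjugate-point complications. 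Expanding $|\langle e_i e_j,e_k\rangle|^2 = \langle e_i e_j, e_k\rangle\overline{\langle e_i e_j, e_k\rangle}$ and summing over the orthonormal basis collapses the sums over $i,j,k$ into kernels, leaving an integral over $M$ (and copies of $M$ appearing from the two factors of the inner product, which coincide on the diagonal after the $t$-integration forces this) of a product of wave kernels, integrated against $\hat\rho(t)\, e^{i\langle t,\tau\rangle}$.

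Next I would pass to the cotangent bundle: near a point $x \in M$, in geodesic normal coordinates, the half-wave kernels are Fourier integral operators, and after composing them the phase variables $\xi, \eta$ live in $\R^n \oplus \R^n$, with the stationary phase in the $t$-variable pinning $|\xi| = $ (frequency of the first factor), $|\eta| = $ (frequency of the second factor), and $|\xi+\eta| = $ (frequency of the third factor) — exactly the map $F$ of \eqref{def F}. Thus the leading term becomes an integral over $x \in M$ and over $(\xi,\eta)$ of an amplitude that, to leading order, is the constant coming from the principal symbols of the Lax parametrices (each contributing $1$ after normalization, with an overall $(2\pi)^{-2n}$ from the two Fourier inversions in the $2n$ phase variables) against $\delta(F(\xi,\eta) - \tau)$. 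Integrating $\delta(F(\xi,\eta)-\tau)$ over $(\xi,\eta) \in \R^{2n}$ gives precisely $\vol F^{-1}(\tau)$ by the definition of the Leray measure, and integrating the constant symbol over $M$ gives $\vol M$; this produces the main term $(2\pi)^{-2n}\vol M\,\vol F^{-1}(\tau)$. The triangle-good hypothesis on $\Gamma$ guarantees that for $\tau \in \Gamma$ the level set $F^{-1}(\tau)$ is a compact submanifold of $\R^{2n}\setminus\{\xi = 0\ \text{or}\ \eta = 0\ \text{or}\ \xi+\eta=0\}$ on which $dF$ has full rank (the degenerate configurations, where some $\xi,\eta,\xi+\eta$ vanish or are parallel, are exactly where $dF$ drops rank, and these are excluded), so the stationary phase analysis is clean and uniform for $\tau/|\tau|$ in a compact subset of the sphere.

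For the error term, I would track homogeneity: $\vol F^{-1}(\tau)$ scales like $|\tau|^{2n-3}$ (it is a $(2n-3)$-dimensional Leray measure), the main term is of size $|\tau|^{2n-3}$, and each successive term in the stationary phase / symbol expansion gains a factor of $|\tau|^{-1}$, so the first correction is $O(|\tau|^{2n-4})$; one also needs that the remainder in the Lax parametrix and the non-stationary contributions in $t$ are $O(|\tau|^{-\infty})$, which follows from integration by parts in $t$ away from the stationary set and from the standard remainder estimates for the wave parametrix on $|t| < \inj M$. The main obstacle, and the step requiring the most care, is the bookkeeping of the composition of the three half-wave FIOs and the correct identification of the phase function and the Jacobian factors so that the principal symbol really does reduce to the Leray density of $F$ with the stated constant — in particular making sure the two copies of the inner product combine to give a single factor of $\vol M$ rather than an integral over $M \times M$, which is where the stationary phase in $t$ (forcing the two spatial points to coincide) does the essential work. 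A secondary technical point is ensuring all estimates are uniform in $\tau$ ranging over the cone $\Gamma$, which is handled by working with $\tau = |\tau|\omega$ for $\omega$ in a compact subset of the sphere and rescaling.
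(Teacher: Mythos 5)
Your plan is the right conceptual approach, and it matches the paper's strategy in spirit: write $\rho*\mu(\tau)$ via Fourier inversion of $\rho$ as a $t$-integral against a product of three half-wave kernels restricted to two copies of the diagonal, substitute Lax parametrices on $|t_j| < \inj M$, and locate the critical set of the resulting phase. Your observations that the critical equations pin $|\xi_j| = \tau_j$ and force $x = y$ and $t = 0$ are correct (the paper encodes exactly this in the computations leading up to Proposition \ref{main composition prop}), and your identification of the main term as $(2\pi)^{-2n}\vol M \cdot \vol F^{-1}(\tau)$ via the Leray density of $F$ is on target.

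However, the step you describe as a ``clean and uniform'' stationary phase is precisely where the paper spends almost all of its effort, and your sketch does not engage with the central difficulty. The critical set of the total phase is a submanifold of positive dimension, parametrized by $x_1 \in M$ together with $(\xi_2,\xi_3) \in F^{-1}(\tau)$, hence of dimension $n + (2n-3) = 3n-3$, and the Hessian of the phase is degenerate along it. The nondegenerate stationary-phase lemma does not apply; one must use stationary phase over a critical manifold, which is exactly the Duistermaat--Guillemin clean composition calculus with excess $e=3n-3$, and one must carry out the half-density bookkeeping to see that the symbol of the composition really is the Leray density of $F$ with the stated constant. That bookkeeping, the content of Lemma \ref{symbol composition lem}, Proposition \ref{main composition prop}, and Appendices \ref{DENSITIES}--\ref{SYMBOL CALCULUS}, is the substantive argument, not a routine remainder estimate. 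Two further items your sketch omits: (i) the composition $U\circ(\delta_\Delta\otimes\delta_\Delta)$ is not a priori a Lagrangian distribution, because the wavefront set of the tensor-product wave operator contains `planes' and `axes' lying over the zero section; the paper inserts a zeroth-order pseudodifferential cutoff $A$ with triangle-good conic support to excise these (Proposition \ref{Q of A is an FIO}), and you give no mechanism for doing so; (ii) the overall complex unit (the Maslov factor $\zeta$) is left undetermined by your computation, whereas the paper fixes $\zeta = 1$ by a positivity argument after choosing $a\geq 0$ and $\rho\geq 0$.
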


\begin{proposition}\label{prop leray measure}
Let $\tau = (\tau_1, \tau_2, \tau_3)$ specify the side lengths of a nondegenerate triangle and let $\area(\tau)$ denote its area. Then,
\[
	\vol F^{-1}(\tau) = \vol S^{n-1} \vol S^{n-2} \tau_1 \tau_2 \tau_3 (2 \area(\tau))^{n-3}.
\]
\end{proposition}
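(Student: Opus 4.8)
The plan is to compute $\vol F^{-1}(\tau)$ directly from the definition of the Leray density for $F(\xi,\eta) = (|\xi|,|\eta|,|\xi+\eta|)$, reducing the $2n$-dimensional integral to an integral over the sphere $S^{n-1}$ (for $\xi$) and then over an intermediate sphere (for $\eta$) by peeling off the constraints one at a time. First I would fix $|\xi| = \tau_1$, parametrizing that sphere of radius $\tau_1$ by $\xi = \tau_1\omega$ with $\omega \in S^{n-1}$, which contributes a factor $\tau_1^{n-1}\,d\omega$ and, after dividing by $|\nabla_\xi |\xi|| = 1$, accounts for the first delta. By rotational symmetry, the remaining integral in $\eta$ is independent of $\omega$, so this step just produces the factor $\vol S^{n-1}\,\tau_1^{n-1}$. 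What remains is to compute $\int_{\R^n} \delta(|\eta| - \tau_2)\,\delta(|\xi+\eta| - \tau_3)\,d\eta$ for a fixed $\xi$ with $|\xi| = \tau_1$.

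For that inner integral I would again use polar coordinates $\eta = r\theta$, $r = |\eta|$, $\theta \in S^{n-1}$, so $d\eta = r^{n-1}\,dr\,d\theta$. The first delta sets $r = \tau_2$, leaving $\tau_2^{n-1}\int_{S^{n-1}} \delta(|\xi + \tau_2\theta| - \tau_3)\,d\theta$. Writing $|\xi + \tau_2\theta|^2 = \tau_1^2 + \tau_2^2 + 2\tau_1\tau_2\langle \hat\xi, \theta\rangle$, the constraint $|\xi+\tau_2\theta| = \tau_3$ pins down only the angle $\phi$ between $\theta$ and $\hat\xi$; decomposing $S^{n-1}$ into the latitude circles $S^{n-2}$ at angle $\phi$ gives $\int_{S^{n-1}} g(\langle\hat\xi,\theta\rangle)\,d\theta = \vol S^{n-2}\int_0^\pi g(\cos\phi)\sin^{n-2}\phi\,d\phi$. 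Then I change variables from $\phi$ to $t = |\xi+\tau_2\theta|$, compute the Jacobian $\frac{dt}{d\phi}$, and evaluate the delta at $t = \tau_3$. A short computation gives $\sin^{n-2}\phi$ and the Jacobian factor combining into $(2\area(\tau))^{n-3}$ times $\tau_3$ up to constants, using the half-angle/Heron-type identity $16\,\area(\tau)^2 = 2\tau_1^2\tau_2^2 + 2\tau_2^2\tau_3^2 + 2\tau_3^2\tau_1^2 - \tau_1^4 - \tau_2^4 - \tau_3^4$, which is exactly $(2\tau_1\tau_2\sin\phi_0)^2$ where $\phi_0$ is the relevant angle.

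Collecting the factors: $\vol S^{n-1}\tau_1^{n-1}$ from the $\xi$-integration, $\tau_2^{n-1}$ from the radial $\eta$-integration, $\vol S^{n-2}$ from the latitude decomposition, and the Jacobian/Heron factor from the final change of variables. Matching powers, the $\tau_1^{n-1}\tau_2^{n-1}$ should reorganize as $\tau_1\tau_2\tau_3 \cdot (\tau_1\tau_2)^{n-2}\tau_3^{-1}$, and the sine power $\sin^{n-2}\phi_0 = (2\area(\tau)/(\tau_1\tau_2))^{n-2}$ cancels the $(\tau_1\tau_2)^{n-2}$, while the Jacobian $dt/d\phi$ at $\phi_0$ equals $\tau_1\tau_2\sin\phi_0 \cdot \tau_2/\tau_3 \cdot(\cdots)$—the precise bookkeeping here is the one step I would do carefully—to land on $\vol S^{n-1}\vol S^{n-2}\,\tau_1\tau_2\tau_3\,(2\area(\tau))^{n-3}$.

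The main obstacle is purely bookkeeping: tracking the exact powers of $\tau_1,\tau_2,\tau_3$ and the Jacobian of the substitution $\phi \mapsto |\xi+\tau_2\theta|$ so that everything collapses to the clean Heron-area expression, and checking the nondegeneracy hypothesis guarantees $dF$ has full rank on $F^{-1}(\tau)$ (equivalently $\sin\phi_0 \ne 0$), so that the Leray density is well-defined and the change of variables is nonsingular. There is no conceptual difficulty beyond organizing these elementary computations; the case $n = 3$, where $(2\area(\tau))^0 = 1$, is a useful sanity check, as is dimensional analysis in $\tau$.
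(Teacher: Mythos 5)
Your proposal follows essentially the same route as the paper's proof: peel off $\delta(|\xi|-\tau_1)$ using rotational invariance to produce $\vol S^{n-1}\tau_1^{n-1}$, polar coordinates for $\eta$ to produce $\tau_2^{n-1}$, latitude decomposition of $S^{n-1}$ to produce $\vol S^{n-2}\sin^{n-2}\phi\,d\phi$, then resolve the last delta via the substitution $\phi\mapsto|\xi+\tau_2\theta|$ whose Jacobian is $\tau_1\tau_2\sin\phi/\tau_3$, combining with $\tau_1\tau_2\sin\phi_0 = 2\area(\tau)$ to yield the answer. The bookkeeping you flagged indeed works out (the net sine power is $n-3$, not $n-2$, because of the extra $1/\sin\phi_0$ from the Jacobian), and your nondegeneracy observation that $\sin\phi_0\neq 0$ guarantees full rank of $dF$ is the same check the paper makes by insisting $0<c<a+b$.
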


Proposition \ref{prop leray measure} shows that the main term in Theorem \ref{main theorem triangle-good} is positive-homogeneous of order $2n-3$ and is indeed one order higher than the remainder. It is possible to obtain asymptotics for $\mu(\Omega)$ where $\Omega$ belongs to a suitable class of triangle-good regions using a Tauberian theorem like the one in \cite{WXZ}.

We have described how much of the mass of $\mu$ lies in both the triangle-good and triangle-bad regions of $\R^3$. What remains to be answered is how much of this mass can hide in the interface between, i.e. at points specifying the side lengths of degenerate triangles. The following corollary of the previous theorems shows only a vanishing proportion of the mass of $\mu$ may lie in this interface, though this can likely be improved. Recall our global assumption $n \geq 2$.

\begin{corollary}\label{main theorem triangle-degenerate}
Let $\chi$ be a Schwartz-class function on $\R$ with $\int \chi = 1$ and with Fourier support in $(-\inj M, \inj M)$ and let $\rho = \chi \otimes \chi \otimes \chi$. Then for $\tau_1,\tau_2 > 0$,
\begin{multline*}
	\int_{-\infty}^\infty \rho * \mu(\tau_1,\tau_2,\tau_3) \, d\tau_3 = (2\pi)^{-2n} (\vol M)  \int_{|\tau_1 - \tau_2|}^{\tau_1 + \tau_2} \vol F^{-1}(\tau_1,\tau_2,\tau_3) \, d\tau_3 \\ + O((\tau_1 + \tau_2)^{2n-3}).
\end{multline*}
\end{corollary}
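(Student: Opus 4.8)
The plan is to integrate the asymptotic in Theorem~\ref{main theorem triangle-good} in the $\tau_3$ variable and to control the contribution of the two regions where that theorem does not directly apply, namely the triangle-bad region and a small neighborhood of the degenerate locus $\{\tau_3 = |\tau_1-\tau_2|\} \cup \{\tau_3 = \tau_1+\tau_2\}$. Write $\tau_1+\tau_2 = T$ and fix $\tau_1,\tau_2>0$; all estimates will be uniform once we restrict, say, to $\tau_1,\tau_2$ comparable, the general case following by the homogeneity built into the statements (or by noting the claimed error $O(T^{2n-3})$ dominates any discrepancy). Since $\rho = \chi\otimes\chi\otimes\chi$ has $\supp\hat\rho \subset (-\inj M,\inj M)^3$ and $\int\rho = 1$, $\rho$ is a legitimate choice in Theorem~\ref{main theorem triangle-good}, and by Proposition~\ref{prop leray measure} the main term $\vol F^{-1}(\tau_1,\tau_2,\tau_3)$ is continuous and bounded by $C\,T^{2n-3}$ on the relevant range of $\tau_3\in[|\tau_1-\tau_2|,\tau_1+\tau_2]$, an interval of length $\le 2\min(\tau_1,\tau_2)\le T$.

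First I would split the $\tau_3$-integral into three pieces. \emph{(i)} For $\tau_3$ in a triangle-bad cone $\Gamma = \{\tau_3 \ge (1+\epsilon)(\tau_1+\tau_2)\} \cup \{\tau_3 \le (1-\epsilon)|\tau_1-\tau_2|\}$ — more precisely its two connected pieces, each of which is triangle-bad by Definition~\ref{triangle-good and trangle-bad} — apply Theorem~\ref{main theorem triangle-bad} to get $\mu(\tau+[0,1]^3) \le C_N(\tau_1+\tau_2+m)^{-N}$ summing over unit shifts in $\tau_3$; since $\rho$ is Schwartz, $\rho*\mu(\tau_1,\tau_2,\tau_3)$ is likewise rapidly decaying there, and its integral over the bad range is $O(T^{-N})$ for every $N$, hence negligible. \emph{(ii)} For $\tau_3$ in the compact ``core'' triangle-good cone $\Gamma_\epsilon = \{(1-\epsilon)|\tau_1-\tau_2| \le \tau_3 \le (1+\epsilon)(\tau_1+\tau_2)\} \cap \{\text{bounded away from the degenerate rays}\}$, apply Theorem~\ref{main theorem triangle-good} pointwise: $\rho*\mu(\tau) = (2\pi)^{-2n}(\vol M)\vol F^{-1}(\tau) + O(T^{2n-4})$, and integrate in $\tau_3$ over an interval of length $O(T)$ to pick up $(2\pi)^{-2n}(\vol M)\int \vol F^{-1} \,d\tau_3 + O(T^{2n-3})$. \emph{(iii)} The leftover is a neighborhood of the degenerate locus of $\tau_3$-width $O(\epsilon T)$; here Theorem~\ref{main theorem triangle-good} is not available, so I bound $\rho*\mu$ from above by a cruder estimate and use the smallness of the interval, then let $\epsilon\to 0$ at the end.

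The main obstacle is step \emph{(iii)}: one needs an a priori upper bound $\rho*\mu(\tau_1,\tau_2,\tau_3) = O(T^{2n-3})$, uniform for \emph{all} $\tau_3 > 0$, including the degenerate values. I expect this to come from the same Fourier-integral-operator machinery that proves Theorem~\ref{main theorem triangle-good}, but used only for an upper bound: $\rho*\mu(\tau)$ is, up to constants, $\int \hat\rho(t)\, \mathrm{tr}\big(\text{(composition of half-wave operators)}\big)\, e^{i t\cdot\tau}\,dt$, whose integrand is $O(T^{2n-3})$ uniformly because the relevant FIO has a kernel of the expected order regardless of whether $\tau$ is in the classically allowed region — the degenerate directions only cause the \emph{lower}-order main term to degenerate, not the leading size. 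Granting this uniform bound, the contribution of the $O(\epsilon T)$-wide degenerate neighborhood to the integral is $O(\epsilon T)\cdot O(T^{2n-3}) = O(\epsilon\, T^{2n-2})$, which is not yet acceptable; so I would instead integrate the pointwise bound against the rapidly-decaying tail more carefully, or — cleaner — observe that on the degenerate neighborhood one still has $\vol F^{-1}(\tau) = O(T^{2n-3})$ by Proposition~\ref{prop leray measure} (the factor $(2\area(\tau))^{n-3}$ is the only singular one, and for $n\ge 3$ it vanishes there while for $n=2$ it is integrable in $\tau_3$), so it suffices to prove $\rho*\mu(\tau) - (2\pi)^{-2n}(\vol M)\vol F^{-1}(\tau) = O(T^{2n-3})$ \emph{uniformly up to the degenerate locus}, a statement slightly stronger than Theorem~\ref{main theorem triangle-good} but provable by the same stationary-phase analysis with the degeneracy tracked explicitly. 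Once the three pieces are assembled and $\epsilon\to 0$, the limits of integration converge to $|\tau_1-\tau_2|$ and $\tau_1+\tau_2$ and the claimed identity follows; the $n=2$ case requires only the observation that $\int_{|\tau_1-\tau_2|}^{\tau_1+\tau_2}\vol F^{-1}\,d\tau_3$ converges, which it does since the integrand is $\asymp \tau_1\tau_2\tau_3/(2\area(\tau))$ and $\area(\tau)^{-1}$ is integrable across the degenerate endpoints.
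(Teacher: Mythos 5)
Your approach is genuinely different from the paper's, and it has a real gap you've already half-noticed yourself.

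The paper's proof is much shorter and completely bypasses the degenerate-locus difficulty. Integrating in $\tau_3$ with $\int\chi = 1$ removes the $\chi(\tau_3 - \lambda_k)$ factor; by Parseval, $\sum_k |\langle e_i e_j, e_k\rangle|^2 = \|e_i e_j\|_{L^2(M)}^2$; and the whole left-hand side collapses to
\[
\int_M \Bigl(\sum_i \chi(\tau_1 - \lambda_i) |e_i(x)|^2\Bigr)\Bigl(\sum_j \chi(\tau_2-\lambda_j)|e_j(x)|^2\Bigr) dV_M(x).
\]
Each bracketed factor is a localized pointwise Weyl sum with known uniform asymptotics $(2\pi)^{-n}(\vol S^{n-1})\tau_\ell^{n-1} + O(\tau_\ell^{n-2})$, and then the main term is matched with $(2\pi)^{-2n}(\vol M)\int_{|\tau_1-\tau_2|}^{\tau_1+\tau_2}\vol F^{-1}\,d\tau_3$ by an explicit integral of the formula in Proposition~\ref{prop leray measure}. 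No pointwise information about $\rho*\mu$ near the degenerate rays is ever needed.

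Your route requires exactly what the paper never proves: a uniform control on $\rho*\mu(\tau)$ as $\tau$ approaches the degenerate locus. You correctly flag that this would be a statement ``slightly stronger than Theorem~\ref{main theorem triangle-good},'' but it is not merely slightly stronger — it fails to follow from the same method. The proof of Proposition~\ref{main composition prop} relies on the clean composition hypothesis that the span of the Hamilton vectors $H_p(x_1,\xi_\ell)$ on $T^*M^3$ intersects $T\dot N^*\Delta$ trivially, and that hypothesis is verified precisely by using that $\xi_1, \xi_2, \xi_3$ span a nondegenerate triangle (``any two of them are linearly independent''). At the degenerate rays the $\xi_\ell$ become collinear, the composition ceases to be clean, and the symbol of $S \circ \delta_\Delta$ would need a caustic/Airy-type normal form that the paper does not develop. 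Relatedly, you cannot recover the result by sending $\epsilon \to 0$ at the end: the step-(iii) bound you obtain is $O(\epsilon T^{2n-2})$, which beats the target $O(T^{2n-3})$ only for $\epsilon \lesssim T^{-1}$, and nothing guarantees the implicit constant in step (ii) stays bounded as the ``core'' triangle-good cone is allowed to hug the degenerate rays (the remainder constant in Theorem~\ref{main theorem triangle-good} depends on $\Gamma$). So as written, the proposal does not close.
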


The proof of is very short but requires pointwise asymptotics for eigenfunctions in addition to the asymptotics of Theorem \ref{main theorem triangle-good}.

\subsection*{Acknowledgements} The author would like to thank Alex Iosevich for helpful conversations and for his invaluable input on an early draft of this paper, and Steve Zelditch for helpful conversations and for his guidance and support as my postdoctoral mentor.


\section{Setup for Theorems \ref{main theorem triangle-bad} and \ref{main theorem triangle-good}}

We now prepare our problem for the theory of Fourier integral operators
(see Duistermaat's book \cite{DuistermaatFIOs} and H\"ormander's paper \cite{HormanderPaper} for background). 
In Section \ref{COMPOSITION FORMULA}, we use Duistermaat and Guillemin's \cite{DG} clean composition calculus to compute the symbol of the composition of a Fourier integral operator and a Lagrangian distribution. The author hopes that the presentation in Section \ref{COMPOSITION FORMULA} and Appendices \ref{DENSITIES} and \ref{SYMBOL CALCULUS} here will provide a helpful reference for those learning the clean composition calculus.

We begin by replacing $\langle e_i e_j, e_k \rangle$ by $\langle e_i e_j, \overline e_k \rangle$ in the definition of $\mu$ \eqref{joint spectral measure} and writing
\[
	\mu = \sum_{i,j,k} \left|\int_M e_i e_j e_k \, dV_M \right|^2 \delta_{(\lambda_i,\lambda_j,\lambda_k)}.
\]
This switch leaves $\mu$ unchanged and will reduce notation later. Note $\mu$ is tempered by standard sup-norm bounds on eigenfunctions (see e.g. \cite{HormanderIV, HangzhouLectures}).

In what follows, we use $P$ to denote the elliptic, first-order pseudodifferential operator $\sqrt{-\Delta}$ on $M$. To simplify notation, we set $m = (i,j,k) \in \N^3$ and take the smooth half-density
\[
	\phi_m = e_i \otimes e_j \otimes e_k |dV_{M^3}|^{1/2}
\]
on $M^3$ (see Appendix \ref{DENSITIES}). The collection $\phi_m$ for $m \in \N^3$ forms a Hilbert basis for the intrinsic $L^2$ space on $M^3$ consisting of joint eigen-half-densities of the commuting operators $P \otimes I \otimes I$, $I \otimes P \otimes I$, and $I \otimes I \otimes P$. We write its joint eigenvalues as $\lambda_m = (\lambda_i, \lambda_j, \lambda_k)$. Let $\Delta = \{(x,x,x) : x \in M\} \subset M^3$ denote the diagonal of the threefold product $M^3$, and let $\delta_\Delta$ to be the corresponding half-density distribution
\[
	(\delta_\Delta, f |dV_{M^3}|^{1/2}) = \int_M f(x,x,x) \, dV_M(x) \qquad \text{ for all $f \in C^\infty(M^3)$.}
\]
We then realize
\[
	\int_M e_i e_j e_k \, dV_M = (\delta_\Delta, \phi_m).
\]
The Fourier inversion of $\mu$ yields
\[
	\check \mu(t) = \frac{1}{(2\pi)^3} \sum_{m} |(\delta_\Delta, \phi_m)|^2 e^{i\langle t, \lambda_m \rangle}
\]
where here $t = (t_1,t_2,t_3) \in \R^3$. Now we write
\begin{align*}
	\check \mu(t) |dt|^{1/2} &= \frac{1}{(2\pi)^3} \sum_{m} (\delta_\Delta \otimes \delta_\Delta, \phi_m \otimes \overline \phi_m) e^{i\langle t, \lambda_m \rangle} |dt|^{1/2} \\
	&= \frac{1}{(2\pi)^3} U \circ \delta_{\Delta \times \Delta}(t)
\end{align*}
where here $U$ is the operator taking test half-densities on $M^3 \times M^3$ to half-density distributions on $\R^3$ with kernel
\[
	U(t,x,y) = e^{it_1P}(x_1,y_1) e^{it_2P}(x_2,y_2) e^{it_3 P}(x_3,y_3)
\]
for $x = (x_1,x_2,x_3) \in M^3$ and $y = (y_1,y_2,y_3) \in M^3$.

If we were to try to compute the symbolic data of the composition $U \circ \delta_{\Delta \times \Delta}$ right away, we quickly run into problems, namely that the composition is is not clean in the sense of Duistermaat and Guillemin \cite{DG}. We can circumvent this issue by applying a pseudodifferential operator $A \in \Psi^0_{\text{phg}}(M^3)$ to one of the factors of $\delta_\Delta$ to cut away problematic parts. We define $A$ on $M^3$ spectrally by
\begin{equation} \label{def A pdo}
	A\phi_m = a(\lambda_m) \phi_m,
\end{equation}
where $a$ is real, smooth, and positive-homogeneous of order $0$ outside of, say, the ball $B$ of radius $1$. By the conical support of $a$, we mean the smallest closed cone $\Gamma$ in $\R^3 \setminus 0$ for which $\supp a \setminus B \subset \Gamma$. We will need to make some decisions about the conical support of $a$ later in the argument, but for now we examine its effect on the calculation of $\check \mu(t) |dt|^{1/2}$. If we define an altered measure
\[
	\mu_a = \sum_m a(\lambda_m) |(\delta_\Delta, \phi_m)|^2 \delta_{\lambda_m},
\]
tracing back through the computations above yields
\begin{equation}\label{strategy main composition}
	\check \mu_a(t) |dt|^{1/2} = \frac{1}{(2\pi)^3} U \circ (\delta_\Delta \otimes A \delta_\Delta)(t).
\end{equation}
Note that if $\Omega$ is some Borel measurable subset of $\R^3$ on which $a \equiv 1$, then $\mu_a(\Omega) = \mu(\Omega)$.

The core of the argument is the computation of the symbolic data of the composition \eqref{strategy main composition}. Instead of performing the composition in one step, we break it into two. First, we consider the operator $Q : C^\infty(M^3) \to C^\infty(\R^3 \times M^3)$ with the same distribution kernel as $U$ and let $S : C^\infty(M^3) \to C^\infty(\R^3)$ be the operator with distribution kernel
\begin{equation}\label{strategy first composition}
	(Q \circ A) \circ \delta_\Delta.
\end{equation}
It follows that
\begin{equation}\label{strategy second composition}
	U \circ (\delta_\Delta \otimes A\delta_\Delta) = S \circ \delta_\Delta.
\end{equation}
The benefit of breaking the computation up is twofold: The composition \eqref{strategy first composition} is transversal and easy to compute, and the composition on the right hand side of \eqref{strategy second composition} involves fewer variables than $U \circ (\delta_\Delta \otimes A\delta_\Delta)$.

We now record the symbolic data of both $Q \circ A$ and $\delta_{\Delta}$ in preparation for the first composition. We begin with the latter. In what follows, $g$ will denote the Riemannian metric on $M$, and $|g(x)|^{1/2} \, dx$ the local form of the Riemannian volume density. For a general manifold $X$, we use $\dot T^*X$ to denote the punctured cotangent bundle $T^*X \setminus 0$. We use this `dot' notation to similarly denote punctured conormal bundles.

\begin{proposition}\label{delta symbol}
	$\delta_{\Delta} \in I^{n/4}(M^3, \dot N^*\Delta)$ where, with respect to canonical local coordinates,
	\[
		\dot N^*\Delta = \{(x,-\xi_2-\xi_3, x, \xi_2, x, \xi_3) : x \in M, \ \xi \in \dot T_{(x,x,x)}^*M^3 \}
	\]
	with a principal symbol with half-density part
	\[
		(2\pi)^{-n/4} \frac{1}{|g(x)|^{1/4}} |dx \, d\xi_2 \, d\xi_3|^{1/2}.
	\]
\end{proposition}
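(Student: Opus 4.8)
The plan is to identify $\delta_\Delta$ as a Lagrangian distribution associated with the conormal bundle of the diagonal submanifold $\Delta \subset M^3$, compute its order, and then extract the principal symbol by an explicit stationary-phase or Fourier-representation computation in local coordinates. First I would recall the general fact that if $Y \subset X$ is a smooth embedded submanifold of codimension $d$ in an $N$-manifold, then the delta-distribution $\delta_Y$ (as a half-density, paired against $f|dV|^{1/2}$ by integrating $f$ over $Y$ against the induced density) is a conormal distribution in $I^{(2d - N)/4 + \text{(correction)}}(X, \dot N^*Y)$; here $X = M^3$ has dimension $N = 3n$, $Y = \Delta$ has codimension $d = 2n$, so the order should come out to $(2\cdot 2n - 3n)/4 = n/4$ after accounting for the half-density normalization conventions. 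I would pin down the exact order by writing $\delta_\Delta$ via an oscillatory integral: in local coordinates $(x^{(1)}, x^{(2)}, x^{(3)})$ near a diagonal point, use that the diagonal is cut out by $x^{(2)} - x^{(1)} = 0$ and $x^{(3)} - x^{(1)} = 0$, so that
\[
	\delta_\Delta \sim \int_{\R^{2n}} e^{i\langle x^{(2)} - x^{(1)}, \theta_2\rangle + i\langle x^{(3)} - x^{(1)}, \theta_3 \rangle} \, a(x)\, d\theta_2 \, d\theta_3
\]
for an appropriate amplitude $a$ incorporating the Riemannian volume density, times the relevant half-densities. Counting: phase variables $\theta = (\theta_2, \theta_3) \in \R^{2n}$, base $\R^{3n}$, amplitude of order $0$, gives order $0 + 2n/2 - 3n/4 = n/4$, matching the claim.

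Next I would read off the Lagrangian. The phase $\Phi(x, \theta) = \langle x^{(2)} - x^{(1)}, \theta_2 \rangle + \langle x^{(3)} - x^{(1)}, \theta_3 \rangle$ is nondegenerate, its critical set $\{d_\theta \Phi = 0\}$ is exactly the diagonal, and the map to $T^*M^3$ via $d_x \Phi$ sends a diagonal point $x$ with covector parameters $(\theta_2, \theta_3)$ to $(x, -\theta_2 - \theta_3, x, \theta_2, x, \theta_3)$. Relabeling $\theta_2 = \xi_2$, $\theta_3 = \xi_3$ gives precisely the stated description of $\dot N^*\Delta$, so that step is essentially bookkeeping once the oscillatory-integral representation is in hand.

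For the principal symbol, the plan is to use the standard formula for the symbol of a conormal distribution defined by a nondegenerate phase: the symbol is the amplitude (at the critical point), times a factor from the Hessian of $\Phi$ in the phase variables, times the appropriate density/half-density factors (the $d$-density on the fibers of $N^*Y$, here a $|d\xi_2 \, d\xi_3|^{1/2}$ piece, combined with a density on $Y$, here $|dx|^{1/2}$ weighted by the Riemannian volume). The Hessian $\partial^2_\theta \Phi$ vanishes here (the phase is linear in $\theta$), so there is no Hessian determinant contribution beyond the count already used for the order; the only nontrivial input is matching the amplitude against the Riemannian volume density that appears in the definition of $\delta_\Delta$ as a half-density on $M^3$. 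Writing $dV_M = |g(x)|^{1/2}\, dx$ locally, the pairing $(\delta_\Delta, f|dV_{M^3}|^{1/2}) = \int f(x,x,x) |g(x)|^{1/2}\, dx$ forces the amplitude in the oscillatory integral to be $(2\pi)^{-2n}|g(x)|^{1/2}$ relative to flat density, and after dividing by the half-density $|g|^{3/4}$ hidden in $|dV_{M^3}|^{1/2}$ and normalizing the $(2\pi)$ powers per the symbol conventions in Appendix \ref{SYMBOL CALCULUS}, one lands on the half-density part
\[
	(2\pi)^{-n/4}\, |g(x)|^{-1/4}\, |dx \, d\xi_2 \, d\xi_3|^{1/2}.
\]
I expect the main obstacle to be purely notational rather than conceptual: keeping the $(2\pi)$-powers and the half-density factors (the $|g|$-powers distributed among $M^3$, the fiber of $N^*\Delta$, and the Maslov/symbol normalization) consistent with the conventions fixed in the appendices, since a conventions mismatch is the easiest way to get the power of $|g|$ or of $2\pi$ wrong. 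The cleanest way to avoid errors is to test the formula against the simplest case — e.g.\ $M = \T^n$ with $g \equiv \text{Id}$, where $\delta_\Delta$ has an explicit Fourier-series expression — and confirm the constant there.
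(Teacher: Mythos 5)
Your proposal follows essentially the same route as the paper: write $\delta_\Delta$ as an oscillatory integral with the linear phase cutting out the diagonal, verify nondegeneracy, read off the order by counting phase and base variables, read off $\dot N^*\Delta$ from $d_x\Phi$ on the critical set, and match amplitudes (tracking the $|g|$- and $(2\pi)$-powers) to get the half-density symbol. Your sign convention in the phase is the negation of the paper's, which neatly avoids the paper's final substitution $(\xi_2,\xi_3)\mapsto(-\xi_2,-\xi_3)$; that is a cosmetic difference.

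One point worth tightening: you say the symbol carries ``a factor from the Hessian of $\Phi$ in the phase variables,'' and that since $\partial_\theta^2\Phi = 0$ there is ``no Hessian determinant contribution.'' This is not the right object. The relevant factor in the principal-symbol formula is the Leray density $d_{\Phi'_\theta}$ on the critical set $C_\Phi=\{\Phi'_\theta=0\}$, which in the parametrization by $(x,\theta)$ is
\[
	d_{\Phi'_\theta} = \left| \det \frac{\partial(\lambda,\Phi'_\theta)}{\partial(x,\theta)} \right|^{-1} |d\lambda|,
\]
and it is the mixed block $\partial_x \Phi'_\theta$ (not $\partial^2_\theta\Phi$) that makes this Jacobian nonsingular — if only the $\theta$-Hessian mattered, its vanishing would make the density degenerate, not trivial. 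Here $\Phi'_\theta = (x^{(2)}-x^{(1)},\,x^{(3)}-x^{(1)})$ is linear in $x$ with full-rank differential, so the Jacobian is $1$ and $d_{\Phi'_\theta} = |dx\,d\xi_2\,d\xi_3|$, which is the content behind your correct conclusion. With that correction the rest of your amplitude bookkeeping, $(2\pi)^{-2n}|g|^{1/2}$ divided by $|g|^{3/4}$ and re-normalized to the $(2\pi)^{-(n+2N)/4}$ convention, lands on $(2\pi)^{-n/4}|g(x)|^{-1/4}|dx\,d\xi_2\,d\xi_3|^{1/2}$ as stated.
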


In the proof below, we use the definition of the principal symbol via oscillatory integrals with nondegenerate phase functions. To review, let $X$ be an open subset of $\R^n$ and let $\varphi(x,\theta)$ of $(x,\theta) \in X \times (\R^N \setminus 0)$ be a nondegenerate phase function, which requires $\varphi$ be positive-homogeneous in $\theta$, $d\varphi$ be nonvanishing, and $d\varphi_\theta'$ be full-rank wherever $\varphi_\theta' = 0$. We define a half-density distribution $u$ on $X$ by the oscillatory integral
\[
	u(x) = (2\pi)^{-(n + 2N)/4} \left( \int_{\R^N} e^{i\varphi(x,\theta)} a(x,\theta) \, d\theta \right) |dx|^{1/2},
\]
where $a(x,\theta)$ belongs to symbol class $S^m_{\text{phg}}(X \times (\R^N \setminus 0))$. The phase function defines a critical set
\[
	C_\varphi = \{(x,\theta) : \varphi_\theta'(x,\theta) = 0 \}.
\]
The critical set is a smooth conic manifold in $X \times (\R^N \times 0)$, and is diffeomorphic to a conic Lagrangian submanifold $\Lambda_\varphi \subset \dot T^*X$ via the map $(x,\theta) \mapsto (x,d_x \varphi(x,\theta))$. We say $u$ is a Lagrangian distribution associated to $\Lambda_\varphi$ of order
\[
	\ord u = m + (2N - n)/4,
\]
and write $u \in I^{m + (2N - n)/4}(X, \Lambda_\varphi)$. To $u$ we associate a homogeneous half-density on $\Lambda_\varphi$. This together with the Maslov index comprise the principal symbol of $u$. We only describe the half-density part, since that is the only part we need for our arguments. Given a parametrization of $C_\varphi$ by $\lambda$ in a subset of $\R^n$, the half-density part of the principal symbol is given by the transport of
\[
	a_0(\lambda) \sqrt{d_{\varphi_\theta'}} = a_0(\lambda) \left| \frac{\partial(\lambda, \varphi_\theta')}{\partial(x,\theta)} \right|^{-1/2} |d\lambda|^{1/2}
\]
to $\Lambda_\varphi$ via the map $C_\varphi \to \Lambda_\varphi$, where $a_0$ denotes the top-order term of $a$ and $d_{\varphi_\theta'}$ is the Leray density on $C_\varphi = (\varphi'_\theta)^{-1}(0)$.

\begin{proof}[Proof of Proposition \ref{delta symbol}] Let $x$ denote local coordinates of $M$, and $(x,y,z)$ the three-fold product of these coordinates. Note, $(x,y,z)$ parametrizes a neighborhood of $M^3$ intersecting the diagonal. Suppose $f$ is smooth and supported in such a coordinate patch of $M^3$. We have
\begin{multline*}
	(\delta_\Delta, f |dV_{M^3}|^{1/2}) = \int_{\R^n} f(x,x,x) |g(x)|^{1/2} \, dx \\
	= (2\pi)^{-2n} \idotsint e^{i(\langle x - y, \xi_2 \rangle + \langle x - z, \xi_3 \rangle)} f(x,y,z) |g(x)|^{1/2} \, dx \, dy \, dz \, d\xi_2 \, d\xi_3.
\end{multline*}
Recall, $f|dV_{M^3}|^{1/2}$ is written as $f(x,y,z) |g(x)|^{1/4}|g(y)|^{1/4}|g(z)|^{1/4} |dx \, dy \, dz|^{1/2}$ in local coordinates, and hence we write
\begin{align*}
	\delta_\Delta &= (2\pi)^{-2n} \left( \int_{\R^n} \int_{\R^n} e^{i(\langle x - y, \xi_2 \rangle + \langle x - z, \xi_3 \rangle)} \frac{|g(x)|^{1/4}}{|g(y)|^{1/4}|g(z)|^{1/4}} \, d\xi_2 \, d\xi_3 \right) |dx \, dy \, dz|^{1/2} \\
	&= (2\pi)^{-7n/4} \left( \int_{\R^n} \int_{\R^n} e^{i(\langle x - y, \xi_2 \rangle + \langle x - z, \xi_3 \rangle)} (2\pi)^{-n/4} \frac{|g(x)|^{1/4}}{|g(y)|^{1/4}|g(z)|^{1/4}} \, d\xi_2 \, d\xi_3 \right) |dx \, dy \, dz|^{1/2}.
\end{align*}
Here, we have pulled a factor of $(2\pi)^{-n/4}$ into the symbol so that the correct normalization appears in front.

We now check that $\phi(x,y,z,\xi_2,\xi_3) = \langle x - y, \xi_2 \rangle + \langle x - z, \xi_3 \rangle$ is a nondegenerate phase function. First, we note $\phi$ is positive homogeneous in the frequency variables $(\xi_2,\xi_3)$. Second, we have
\[
	\phi_{x,y,z}' =
		\begin{bmatrix}
		\xi_2 + \xi_3 \\
		-\xi_2 \\
		-\xi_3
		\end{bmatrix}
	\qquad \text{ and } \qquad
	\phi_\xi' =
		\begin{bmatrix}
		x - y \\
	 	x - z
		\end{bmatrix}
\]
and hence $d\phi \neq 0$ whenever $(\xi_2,\xi_3) \neq (0,0)$. Finally, $\phi$ has critical set
\[
	C_\phi = \{(x,x,x,\xi_2,\xi_3) : x \in \R^n, \ (\xi_2, \xi_3) \in \R^{2n} \setminus 0\},
\]
on which
\[
	d\phi_\xi' = \begin{bmatrix}
		I & -I & 0 & 0 & 0 \\
		I & 0 & -I & 0 & 0
	\end{bmatrix}
\]
has full rank. This means $\delta_\Delta$ is a Lagrangian distribution of order
\[
	\ord(\delta_\Delta) = -(3n - 4n)/4 = n/4.
\]
associated to the image of $\mathcal C_\phi$ through the map $(x,y,z,\xi_2,\xi_3) \mapsto (x,y,z,\phi_{x,y,z}'(x,y,z,\xi_2,\xi_3))$, namely
\[
	\{(x,x,x; \xi_2 + \xi_3, -\xi_2, -\xi_3) \in \dot T^* M^3 : x \in M, \ \xi_2, \xi_3 \in T_x^*M \},
\]
which is indeed the punctured conormal bundle $\dot N^*\Delta$. The half-density part of the symbol is the transport of the half-density
\[
	(2\pi)^{-n/4} |g(x)|^{-1/4} \sqrt{d_{\phi_{\xi}'}}
\]
via the parametrization of $\dot N^*\Delta$ by $x, \xi_2,$ and $\xi_3$, where $d_\phi$ is the Leray density
\[
	d_{\phi_{\xi}'} = \left|\frac{\partial(x,\xi_2,\xi_3, \phi_\xi')}{\partial(x,y,z,\xi_2,\xi_3)}\right|^{-1} \, dx \, d\xi_2 \, d\xi_3 = dx \, d\xi_2 \, d\xi_3.
\]
Hence, we have obtained the half-density symbol
\[
	(2\pi)^{-n/4} |g(x)|^{-1/4} |dx \, d\xi_2 \, d\xi_3|^{1/2}
\]
modulo a Maslov factor. The proposition follows after negating $\xi_2$ and $\xi_3$.
\end{proof}

Next, we describe the symbolic data of $Q \circ A$, for which we will need the Hamilton flow. In general, for a smooth manifold $X$ and a smooth function $p$ on $\dot T^*X$, the Hamilton vector field $H_p$ is defined locally by
\begin{equation}\label{local hamiltonian}
	H_p = \sum_j \left( \frac{\partial p}{\partial \xi_j} \frac{\partial}{\partial x_j} - \frac{\partial p}{\partial x_j} \frac{\partial}{\partial \xi_j} \right),
\end{equation}
where $(x_1,\ldots, x_n, \xi_1,\ldots, \xi_n)$ are canonical local coordinates of $T^*X$. $H_p$ can also be defined in a coordinate-invariant way as the vector field for which $\omega(H_p, v) = dp(v)$ for all vector fields $v$ on $\dot T^*X$, where $\omega = d\xi \wedge dx$ is the symplectic $2$-form on $T^*X$. The flow $\exp(tH_p)$ is called the Hamilton flow of $p$. Note, if $p$ is positive-homogeneous of order $1$, then its Hamilton flow is homogeneous in the sense that
\[
	\exp(tH_p)(x,\lambda \xi) = \lambda \exp(tH_p)(x,\xi) \qquad \text{ for $\lambda > 0$.}
\]
For further reading on Hamilton vector fields, flows, and the basics of symplectic geometry as it applies to Fourier integral operators, see \cite[\S21.1]{HormanderIII}, \cite[Chapter 3]{DuistermaatFIOs}, and \cite[\S4.1]{SoggeFIOs}.

Returning to our problem, we let $(x,\xi) = (x_1,\xi_1,x_2,\xi_2,x_3,\xi_3) \in T^*M^3$ for which $\xi_1,\xi_2,\xi_3$ are each nonzero and write
\[
	G^t(x,\xi) = ( \exp(t_1H_p) (x_1,\xi_1), \exp(t_2H_p) (x_2,\xi_2), \exp(t_3H_p) (x_3,\xi_3) ),
\]
where $\exp(t_i H_p)$ denotes the time-$t_i$ Hamiltonian flow with respect to the principal symbol $p$ of $P = \sqrt{-\Delta}$. By an abuse of notation, we also write
\[
	p(x,\xi) = (p(x_1,\xi_1), p(x_2,\xi_2), p(x_3,\xi_3)) \in \R^3,
\]
so that we may pretend $G^t$ is the Hamiltonian flow of an $\R^3$-valued symbol $p$ by a triple of times $t = (t_1,t_2,t_3)$.

\begin{proposition}\label{Q of A is an FIO} The following are true.
\begin{enumerate}
\item If the conical support of $a$ is triangle-good, then $Q \circ A$ of \eqref{strategy first composition} is a Fourier integral operator in $I^{-3/4}(\R^3 \times M^3 \times M^3, \mathcal C')$ associated to canonical relation
	\[
		\mathcal C = \{ (t, p(x,\xi), G^{-t}(x, \xi); x, \xi ) : t \in \R^3, \ (x,\xi) \in \esssupp A \}
	\]
	having principal symbol with half-density part
	\[
		(2\pi)^{3/4} a(p(x,\xi)) |dt \, dx \, d\xi|^{1/2}.
	\]
\item If the conical support of $a$ is triangle-bad, then $(Q \circ A) \circ \delta_\Delta$ is smooth.
\end{enumerate}
\end{proposition}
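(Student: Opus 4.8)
The plan is to treat the two compositions separately, but in both cases the strategy is the same: identify each factor as a Lagrangian distribution / FIO with known canonical relation and symbol, verify that the composition is transversal (or clean), and then invoke the standard composition calculus for Fourier integral operators. For part (1), I would begin by writing the kernel of $Q$ — which is the same as the kernel $U(t,x,y) = e^{it_1P}(x_1,y_1)e^{it_2P}(x_2,y_2)e^{it_3P}(x_3,y_3)$ of $U$, but now regarded as a kernel on $\R^3 \times M^3 \times M^3$. Since each $e^{it_iP}$ is a half-density FIO associated to the graph of the time-$t_i$ geodesic flow $\exp(t_iH_p)$ — more precisely, the half-wave operator $e^{itP}$ has canonical relation $\{(t,\pm p(y,\eta); x,\xi; y,\eta) : (x,\xi) = \exp(tH_p)(y,\eta)\}$ of order $-1/4$ on $\R \times M \times M$ with unit symbol in the right normalization — the operator $Q$ is (up to bookkeeping with the $(2\pi)$ powers and the orders) the exterior tensor product of three copies of this, an FIO in $I^{-3/4}(\R^3 \times M^3 \times M^3, \mathcal{C}_Q')$ where $\mathcal{C}_Q = \{(t, p(x,\xi), G^{-t}(x,\xi); x,\xi)\}$, with half-density symbol a constant times $|dt\,dx\,d\xi|^{1/2}$. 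Composing on the right with the pseudodifferential operator $A \in \Psi^0_{\mathrm{phg}}(M^3)$ does not change the canonical relation (pseudodifferential composition preserves the wavefront relation) but multiplies the symbol by the principal symbol of $A$ pulled back appropriately; since $A$ acts spectrally by $a(\lambda_m)$, its principal symbol is $a(p(x,\xi))$, and restricting to $\mathcal C_Q$ means restricting to $\esssupp A$, i.e. to the conical support of $a$. This gives the stated canonical relation $\mathcal C$, order $-3/4$, and half-density symbol $(2\pi)^{3/4}a(p(x,\xi))\,|dt\,dx\,d\xi|^{1/2}$ — where I would track the $(2\pi)$-powers carefully against the normalization conventions fixed in the text.

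The one genuine analytic point in part (1) is that the composition $(Q\circ A)\circ\delta_\Delta$ — or equivalently the operator $S$ of \eqref{strategy first composition} — is a legitimate composition of FIOs, which is where the triangle-good hypothesis enters. I would check that the composition of $\mathcal C$ with $\dot N^*\Delta$ is transversal: points of $\dot N^*\Delta$ have the form $(x,x,x;\xi_2+\xi_3,-\xi_2,-\xi_3)$, and composing with $\mathcal C$ forces $(x,\xi) \in \esssupp A$ with $\xi = (\xi_2+\xi_3, -\xi_2, -\xi_3)$, i.e. $\xi_1 + \xi_2 + \xi_3 = 0$ as covectors at a single point $x \in M$; then $p(x,\xi) = (|\xi_1|,|\xi_2|,|\xi_3|)$ with $\xi_1 = -\xi_2-\xi_3$ — exactly the condition that $(p_1,p_2,p_3)$ be the side lengths of a (possibly degenerate) triangle realized by the covectors $\xi_1,\xi_2,\xi_3$ at $x$. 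The transversality of the intersection of the two Lagrangians fails precisely when the triangle degenerates (the $\xi_i$ become collinear and the map $(\xi_2,\xi_3)\mapsto(|\xi_2|,|\xi_3|,|\xi_2+\xi_3|)$ drops rank); the triangle-good condition on the conical support of $a$ excludes those degenerate covectors, so the composition is transversal and the standard calculus applies, yielding $S \in I^{\cdot}(\R^3 \times M^3, \cdot)$ with a symbol obtained by integrating the product of the two symbols against the Leray density on the fiber product. I expect this transversality check — and the accompanying verification that the excess/fiber dimension is as claimed so the composed order comes out right — to be the main obstacle, and the place where the hypotheses are actually used.

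For part (2), the argument is the contrapositive of the transversality picture: if the conical support of $a$ is triangle-bad, then on $\esssupp A$ every point has $(|\xi_1|,|\xi_2|,|\xi_3|)$ violating a triangle inequality, say $|\xi_1| > |\xi_2| + |\xi_3|$. But a point of $(Q\circ A)\circ\delta_\Delta$ in the wavefront set requires, as above, $\xi_1 = -\xi_2 - \xi_3$ as covectors at a common point, whence $|\xi_1| = |\xi_2+\xi_3| \le |\xi_2| + |\xi_3|$ by the triangle inequality in $T_x^*M$ — a contradiction. Hence $\esssupp A$ contains no covector that can contribute, the wavefront relation is empty, and $(Q\circ A)\circ\delta_\Delta$ is a smooth half-density. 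I would phrase this cleanly in terms of $\WF'$ of the composition being contained in $\mathcal C \circ \dot N^*\Delta = \emptyset$, using Hörmander's wavefront-set bound for compositions, so that no delicate symbol computation is needed for this half.
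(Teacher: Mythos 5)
Your part (1) has a genuine gap at its very first step. You claim that $Q$ ``is an FIO in $I^{-3/4}(\R^3 \times M^3 \times M^3, \mathcal C_Q')$'' by virtue of being the threefold exterior tensor product of half-wave kernels. That claim is false, and it is exactly the issue the paper's proof is built to handle. The tensor product of Lagrangian distributions is not a Lagrangian distribution: by the tensor-product wavefront estimate (Duistermaat, Proposition~1.3.5), $\WF(u\otimes u\otimes u)$ contains, beyond the product $\WF(u)\times\WF(u)\times\WF(u)$, the ``planes'' and ``axes'' in which one or two of the factors are replaced by (support)$\times\{0\}$. These extra components lie in $\dot T^*(\R^3\times M^3\times M^3)$ (since only \emph{some} of the fiber variables vanish), so they are genuine singularities that prevent the kernel of $Q$ from lying in a single $I^\mu(\cdot,\mathcal C_Q')$ class. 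The whole point of composing with $A$ is not, as you say, that it ``does not change the canonical relation'' — rather, $A$ \emph{cuts away} the planes and axes. The triangle-good hypothesis is what makes this work: if $(x,\xi)\in\esssupp A$ then $p(x,\xi)$ lies in the open positive octant, so each $\xi_1,\xi_2,\xi_3$ is nonzero and $(x,\xi)$ cannot lie over any of the bad components. Your argument, as written, has nothing at all for $A$ to do and never uses the hypothesis on $a$ in the place where it actually matters.

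Related to this, your ``one genuine analytic point'' — that the triangle-good hypothesis enters in the transversality of $\mathcal C\circ\dot N^*\Delta$ — is a misidentification. The transversality of that composition is the content of Proposition~\ref{prop first composition} and Lemma~\ref{symbol composition lem trans}, not of the present statement; Proposition~\ref{Q of A is an FIO}(1) is only about $Q\circ A$ itself being an FIO, and the hypothesis is consumed there in the wavefront-pruning step above.

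For part (2), your underlying observation is correct: on $\dot N^*\Delta$ one has $\xi_1+\xi_2+\xi_3=0$, so the fiber triangle inequality forces $p(x,\xi)$ into the (closed) triangle-satisfied region, disjoint from any triangle-bad cone — this is exactly the paper's remark that $A\delta_\Delta$ is smooth. But your proposal to ``phrase this cleanly in terms of $\WF'$ of the composition being contained in $\mathcal C\circ\dot N^*\Delta=\emptyset$'' slides past two things. First, the canonical relation $\mathcal C$ of $Q\circ A$ as an FIO is only established in part (1) under the triangle-good hypothesis; in the triangle-bad case you have no right to $\WF'(Q\circ A)\subset\mathcal C$ and must instead work with the full estimate $\WF'(Q\circ A)\subset\WF'(Q)$, including the planes and axes. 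Second, the composition bound for wavefront sets (Duistermaat, Corollary~1.3.8) has an extra ``constant wavefront'' term: even when the relation applied to $\WF(\delta_\Delta)$ is empty, you must separately verify there is no $(t,\tau,y,\eta,x,0)\in\WF(Q)$. The paper does this explicitly — none of the factors $\WF(u)$ contains a point with vanishing $y$-fiber covector, since that would force the $t$-fiber covector to vanish too — and that check cannot be omitted.
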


Before proceeding, we discuss how (2) implies Theorem \ref{main theorem triangle-bad}.

\begin{proof}[Proof of Theorem \ref{main theorem triangle-bad}] Let $a \equiv 1$ on $\Gamma$ and $a \equiv 0$ outside of a larger triangle-bad cone which contains $\Gamma$ in its interior. Let $\rho$ be a nonnegative Schwartz-class function on $\R^3$ with compact Fourier support and with $\rho(0) \geq 1$ on the cube $[0,1]^3$. The theorem is proved if $\rho * \mu_a(\tau) = O(|\tau|^{-\infty})$, i.e. if $\check \rho \check \mu_a$ is smooth. It suffices just to show $\check \mu_a$ is smooth, which follows if $S \circ \delta_\Delta$ is smooth, which follows if $S$ has a smooth kernel, which follows from (2) in the proposition. 
\end{proof}

We will need the the calculus of wavefront sets in the proof of Proposition \ref{Q of A is an FIO}.
We refer the reader to Chapter 1 of Duistermaat's book \cite{DuistermaatFIOs} for a clear and detailed presentation.

\begin{proof}[Proof of Proposition \ref{Q of A is an FIO}]
Recall (e.g. from \cite[\S 29.1]{HormanderIV}) the symbolic data of the half-wave operator. We will briefly depart from our notation and take $t \in \R$, $x \in M$, and $\xi \in T_x^* M$. The half-wave operator $e^{-itP} : C^\infty(M) \to C^\infty(\R \times M)$ is a Fourier integral operator of order $-1/4$ with canonical relation
\[
	\{(t,-p(x,\xi), \exp(tH_p)(x,\xi), x, \xi) : t \in \R, \ (x,\xi) \in \dot T^*M \}
\]
with a principal symbol with half-density part
\[
	(2\pi)^{1/4} |dt \, dx \, d\xi|^{1/2}.
\]
By a change of variables $t \mapsto -t$, $e^{itP}$ is similar except that its canonical relation is instead
\[
	\{(t, p(x,\xi), \exp(-tH_p)(x,\xi); x, \xi) : t \in \R, \ (x,\xi) \in \dot T^*M \}.
\]

Let $u$ denote the distribution kernel of $e^{itP}$ on $\R \times M \times M$. Now we switch our notation back to $t \in \R^3$ and $(x,\xi) \in T^*M^3$. The kernel of $Q$ is, up to a permutation of the variables, the threefold tensor product $u \otimes u \otimes u$. Hence by \cite[Proposition 1.3.5]{DuistermaatFIOs}, the wavefront set of the kernel of $Q$ is contained in the union of
\begin{equation}\label{U lagrangian submanifold}
	\WF(u) \times \WF(u) \times \WF(u) \simeq \{(t, p(x,\xi), G^{-t}(x,\xi), x, -\xi) : t \in \R^3, \ (x,\xi) \in \dot T^*M^3 \}
\end{equation}
along with the three `planes'
\begin{align*}
	&0 \times \WF(u) \times \WF(u), \\
	&\WF(u) \times 0 \times \WF(u), \text{ and} \\
	&\WF(u) \times \WF(u) \times 0
\end{align*}
and the three `axes'
\begin{align*}
	&\WF(u) \times 0 \times 0, \\
	&0 \times \WF(u) \times 0, \text{ and }\\
	&0 \times 0 \times \WF(u),
\end{align*}
where the $0$'s occurring in the products denote the zero section of $T^*(\R \times M \times M)$.

If not for the planes and axes, the kernel of $Q$ would be a genuine Langrangian distribution associated with \eqref{U lagrangian submanifold}. To prove (1), it suffices to show $A$ cuts away these bad planes and axes. Since $p(x,\xi)$ lies in the positive octant for any $(x,\xi) \in \esssupp A$, each of $\xi_1, \xi_2$, and $\xi_3$ is nonzero, hence $(x,\xi)$ must not belong to any of the problematic planes or axes. We conclude
\[
	\WF'(Q \circ B) \subset \{(t, p(x,\xi), G^{-t}(x,\xi); x, \xi) : t \in \R^3, \ (x,\xi) \in \esssupp A \}.
\]
The principal symbol on this canonical relation is then the product of the symbols on the half-wave factors and the principal symbol of $A$, namely
\[
	(2\pi)^{3/4} a(p(x,\xi)) |dt \, dx \, d\xi|^{1/2}
\]
up to a Maslov factor.

To prove (2), we employ the composition calculus of wavefront sets \cite[Corollary 1.3.8]{DuistermaatFIOs}. Note $A \delta_\Delta$ is smooth, and so
\[
	\WF(Q \circ A \delta_\Delta) \subset \{(t,\tau,y,\eta) : (t,\tau,y,\eta,x,0) \in \WF(Q) \text{ for some $x \in M^3$}\}.
\]
So, suppose $(t,\tau,y,\eta,x,0) \in \WF(Q)$. Clearly, this point cannot lie in the main component \eqref{U lagrangian submanifold}, so it must lie in one of the troublesome planes or axes. However, none of the factors $\WF(u)$ contain elements of the form $(t_1,\tau_1,y_1,\eta_1,\xi_1,0)$ either, since then both $\eta_1 = 0$ and $\tau_1 = 0$ as well. Hence, the wavefront set of $Q \circ A \delta_\Delta$ is empty and we have (2).
\end{proof}


We now state the symbolic data of the composition $(Q \circ A) \circ \delta_\Delta$ in the case where the conical support of $a$ is triangle-good. We defer the proof until Section \ref{SYMBOLS} after we have built some helpful formulas in Section \ref{COMPOSITION FORMULA}.

\begin{proposition} \label{prop first composition}
	Suppose the conical support of $a$ is triangle-good. Then, $(Q \circ A) \circ \delta_\Delta$ belongs to class $I^{(n-3)/4}(\R^3 \times M^3; \Lambda)$ where
	\[
		\Lambda = \mathcal C \circ \dot N^*\Delta = \{ (t,p(x,\xi), G^{-t}(x,\xi))  \in \dot T^*(\R^3 \times M^3) : 
		 (x,\xi) \in \dot T^* M^3 \cap \esssupp A \}
	\]
	with principal symbol with half-density part
	\[
		(2\pi)^{-(n-3)/4} a(p(x,\xi)) |g(x_1)|^{-1/4} |dt \, dx_1 \, d\xi_2 \, d\xi_3|^{1/2},
	\]
	via the parametrization $(x,\xi) = (x_1,x_1,x_1,-\xi_2-\xi_3,\xi_2,\xi_3)$.
\end{proposition}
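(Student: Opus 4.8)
The plan is to obtain Proposition \ref{prop first composition} as a transversal composition of the Fourier integral operator $Q\circ A$ of Proposition \ref{Q of A is an FIO}(1) with the conormal distribution $\delta_\Delta$ of Proposition \ref{delta symbol}. Under the triangle-good hypothesis, Proposition \ref{Q of A is an FIO}(1) provides a genuine Fourier integral operator $Q\circ A\in I^{-3/4}(\R^3\times M^3\times M^3;\mathcal C')$ with canonical relation $\mathcal C=\{(t,p(x,\xi),G^{-t}(x,\xi);x,\xi):t\in\R^3,\ (x,\xi)\in\esssupp A\}$ and half-density symbol $(2\pi)^{3/4}a(p(x,\xi))\,|dt\,dx\,d\xi|^{1/2}$ in the parametrization by $(t,x,\xi)$, while Proposition \ref{delta symbol} provides $\delta_\Delta\in I^{n/4}(M^3;\dot N^*\Delta)$ with half-density symbol $(2\pi)^{-n/4}|g(x_1)|^{-1/4}\,|dx_1\,d\xi_2\,d\xi_3|^{1/2}$ in the parametrization of $\dot N^*\Delta$ by $(x_1,\xi_2,\xi_3)$. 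It then remains to feed these into the transversal composition theorem for Fourier integral operators (see \cite[Ch.~25]{HormanderIV}, \cite{DG}, or the symbol calculus recalled in Appendix \ref{SYMBOL CALCULUS}) and to do the bookkeeping.

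First I would confirm that the composition \eqref{strategy first composition} is transversal, as was asserted just after \eqref{strategy second composition}; this is immediate. In the parametrization by $(t,x,\xi)$, the projection of $\mathcal C$ onto its right-hand (``input'') factor $\dot T^*M^3$ is simply $(t,x,\xi)\mapsto(x,\xi)$, a submersion onto the open conic set $\esssupp A$, so the fibered product $F=\mathcal C\times_{\dot T^*M^3}\dot N^*\Delta$ is a transverse pullback of the inclusion $\dot N^*\Delta\hookrightarrow\dot T^*M^3$ and the composition has excess zero. Consequently $(Q\circ A)\circ\delta_\Delta$ is a Lagrangian distribution of order $-3/4+n/4=(n-3)/4$ associated to $\Lambda=\mathcal C\circ\dot N^*\Delta$, and a point of $\mathcal C$ survives the composition exactly when its input covector satisfies $(x,\xi)=(x_1,x_1,x_1,-\xi_2-\xi_3,\xi_2,\xi_3)$, which yields both the stated $\Lambda$ and its parametrization by $(t,x_1,\xi_2,\xi_3)$. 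The triangle-good hypothesis is used once more here to keep $\Lambda$ inside $\dot T^*(\R^3\times M^3)$: it confines $\esssupp A$ to the positive octant, so $p(x,\xi)$ and every covector component of $(x,\xi)$ --- hence of $G^{-t}(x,\xi)$, since the Hamilton flow of $p$ preserves $p\ne 0$ --- is nonzero.

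The one substantive step is the principal symbol. I would compute it from the transversal composition formula for half-densities: exterior-tensor the half-density symbols of $Q\circ A$ and $\delta_\Delta$, restrict to $F$, and contract against the Liouville half-density of the glued copy of $\dot T^*M^3$ --- the device that, through the exact sequence $0\to TF\to T\mathcal C\oplus T\dot N^*\Delta\to T\dot T^*M^3\to 0$ induced by transversality, turns a half-density on $\mathcal C\times\dot N^*\Delta$ into one on $F\cong\Lambda$ --- then transport to $\Lambda$ and correct by the composed Maslov factor. In coordinates the exterior product is $(2\pi)^{(3-n)/4}a(p(x,\xi))|g(x_1)|^{-1/4}\,|dt\,dx\,d\xi\,dx_1\,d\xi_2\,d\xi_3|^{1/2}$; writing $(x,\xi)$ as its value $(x_1,x_1,x_1,-\xi_2-\xi_3,\xi_2,\xi_3)$ on $F$ plus a transverse displacement $(w,\zeta)$ --- which spans a copy of $T_\bullet\dot T^*M^3$ with Liouville density $|dw\,d\zeta|$ --- factors the density as $|dt\,dx_1\,d\xi_2\,d\xi_3|\,|dw\,d\zeta|$ up to sign, so dividing out $|dw\,d\zeta|^{1/2}$ and transporting (which is the identity in the common parametrization by $(t,x_1,\xi_2,\xi_3)$) leaves precisely $(2\pi)^{-(n-3)/4}a(p(x,\xi))|g(x_1)|^{-1/4}\,|dt\,dx_1\,d\xi_2\,d\xi_3|^{1/2}$. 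The main obstacle is thus nothing deep --- only this careful half-density bookkeeping once the conventions of Appendix \ref{SYMBOL CALCULUS} are in place --- and the exact agreement of the scalar prefactor $(2\pi)^{3/4}\cdot(2\pi)^{-n/4}=(2\pi)^{-(n-3)/4}$ with the claim is a reassuring consistency check.
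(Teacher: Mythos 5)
Your argument is correct and follows essentially the same route as the paper: both treat \eqref{strategy first composition} as a transversal composition, feed in the symbolic data of Propositions \ref{Q of A is an FIO} and \ref{delta symbol}, and run the half-density bookkeeping of Appendix \ref{SYMBOL CALCULUS}. The only difference is organizational --- the paper's proof of Proposition \ref{prop first composition} is a one-line citation of the general-purpose Lemma \ref{symbol composition lem trans} (together with the remark that triangle-goodness keeps $p$ smooth), whereas you rederive that lemma's content inline for this specific $\mathcal C$ and $\dot N^*\Delta$; your verification of transversality via the submersion onto the right factor and your factoring of $|dt\,dx\,d\xi\,dx_1\,d\xi_2\,d\xi_3|^{1/2}$ against the Liouville half-density are exactly the steps the paper packages into that lemma. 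One small caution on phrasing: the ``transverse displacement $(w,\zeta)$'' should be understood as the full $6n$-dimensional displacement of the $\mathcal C$-parametrizing variable $(x,\xi)$ from its $\dot N^*\Delta$-value (so that $\beta$ becomes the coordinate projection onto $(w,\zeta)$ and $|dw\,d\zeta|$ is genuinely the Liouville density on a copy of $T\dot T^*M^3$), not a $3n$-dimensional normal displacement to $\dot N^*\Delta$ inside $T^*M^3$, which would break the dimension count.
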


Recall from \eqref{strategy first composition} and \eqref{strategy second composition} that $S : C^\infty(M^3) \to C^\infty(\R^3)$ is the operator with distribution kernel $(Q \circ A) \circ \delta_\Delta$. That is, $S \in I^{(n-3)/4}(\R^3 \times M^3; \mathcal C')$ where
\begin{multline} \label{S canonical relation}
	\mathcal C = \{(t,p(x,\xi); G^{t}(x,\xi)) \in \dot T^*R^3 \times \dot T^*M^3: \\
	x = (x_1,x_1,x_1) \in M^3, \ \xi = (-\xi_2-\xi_3,\xi_2,\xi_3) \in \dot T^*_x M^3, \\
	(x,\xi) \in \esssupp A \}
\end{multline}
with the same symbol as in the proposition. Note, $\mathcal C$ is indeed a subset of $\dot T^*R^3 \times \dot T^*M^3$ provided $\esssupp A$ is triangle-good or triangle-bad. The wavefront set of $S \circ \delta_\Delta$ can now be computed with the standard calculus \cite[Chapter 1]{DuistermaatFIOs}, and in doing so the following configurations will appear.

\begin{definition}\label{def geodesic triple} Let $x_1 \in M$ and $\xi_1, \xi_2, \xi_3$ be three covectors over $x_1$, none of which are zero, for which $\xi_1 + \xi_2 + \xi_3 = 0$. Let $t_1, t_2, t_3$ be three times for which the $H_p$ flow of $(x_1,\xi_1), (x_1,\xi_2),$ and $(x_1,\xi_3)$ by times $t_1,t_2,$ and $t_3$, respectively, all lie over a common point in $M$ and
\[
	\exp(t_1 H_p)(x_1,\xi_1) + \exp(t_2 H_p)(x_1,\xi_2) + \exp(t_3 H_p)(x_1,\xi_3) = 0.
\]
We call such a configuration a \emph{geodesic triple} with data $(t,x_1,\xi_2,\xi_3)$.
\end{definition}

Given $t \in \R^3$, $x = (x_1,x_1,x_1) \in M^3$, and $\xi = (-\xi_2-\xi_3,\xi_2,\xi_3) \in \dot N^*_x\Delta$, we will allow ourselves to write the data $(t,x_1,\xi_2,\xi_3)$ of a geodesic triple as $(t,x,\xi)$. This will be convenient for expressing the wavefront set of the composition $S \circ \delta_\Delta$, summarized below.

\begin{proposition}\label{wavefront set prop}
	Suppose the conical support of $a$ is triangle-good. The composition $S \circ \delta_{\Delta}$ is a tempered distribution on $\R^3$ with
	\begin{multline*}
		\WF(S \circ \delta_{\Delta}) \subset \{(t,p(x,\xi)) \in \dot T^*\R^3 : \\
		(t,x,\xi) \text{ is the data of a geodesic triple and } (x,\xi) \in \esssupp A \}.
	\end{multline*}
\end{proposition}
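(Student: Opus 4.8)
The plan is to read off $\WF(S\circ\delta_\Delta)$ from data already in hand: $S$ is a Fourier integral operator whose kernel has (twisted) wavefront set $\mathcal C$, the canonical relation in \eqref{S canonical relation}, while $\delta_\Delta$ is a distribution on $M^3$ with $\WF(\delta_\Delta)=\dot N^*\Delta$ by Proposition \ref{delta symbol}. The tool is the calculus of wavefront sets under products and pushforwards, equivalently the composition formula \cite[Corollary 1.3.8]{DuistermaatFIOs}: all that is needed is to carry out $\mathcal C\circ\dot N^*\Delta$ and recognize the output. Temperedness comes for free: combining \eqref{strategy main composition} and \eqref{strategy second composition} gives $S\circ\delta_\Delta(t)=(2\pi)^3\,\check\mu_a(t)\,|dt|^{1/2}$, and $\mu_a=\sum_m a(\lambda_m)|(\delta_\Delta,\phi_m)|^2\,\delta_{\lambda_m}$ is a tempered measure since $a$ is bounded and $\mu$ is tempered, so its inverse Fourier transform is tempered.

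Next I would check that the composition is genuinely defined. Writing $K_S$ for the kernel of $S$ on $\R^3\times M^3$, view $S\circ\delta_\Delta$ as the pushforward along $(t,y)\mapsto t$ of the product $K_S\cdot(1\otimes\delta_\Delta)$. Compactness of $M^3$ makes the pushforward proper, so one only needs the product to be defined, i.e.\ that $\WF(K_S)$ has no covector of the form $((t,0),(y,\eta))$ pairing badly with $\WF(\delta_\Delta)$. But on the triangle-good cone $\esssupp A$ all three covector slots of any $(x,\xi)$ are nonzero, so every point of $\mathcal C$ has $\R^3$-component $p(x,\xi)=(p(x_1,\xi_1),p(x_1,\xi_2),p(x_1,\xi_3))$ in the open positive octant, in particular never $0$ (this is also why $\mathcal C\subset\dot T^*\R^3\times\dot T^*M^3$, as observed after \eqref{S canonical relation}). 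Hence the obstruction never arises and the same positivity prevents the zero section of $T^*M^3$ from contributing, so
\[
	\WF(S\circ\delta_\Delta)\subset\mathcal C\circ\dot N^*\Delta:=\{(t,\tau):((t,\tau),(y,\eta))\in\mathcal C\text{ for some }(y,\eta)\in\dot N^*\Delta\},
\]
where we used $\dot N^*\Delta=-\dot N^*\Delta$ to ignore the sign convention.

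It remains to identify $\mathcal C\circ\dot N^*\Delta$. Take $((t,\tau),(y,\eta))\in\mathcal C$: by \eqref{S canonical relation}, $\tau=p(x,\xi)$ and $(y,\eta)=G^t(x,\xi)$ with $x=(x_1,x_1,x_1)$ and $\xi=(-\xi_2-\xi_3,\xi_2,\xi_3)\in\dot N^*_x\Delta$, $(x,\xi)\in\esssupp A$. Setting $\xi_1=-\xi_2-\xi_3$ and $\exp(t_iH_p)(x_1,\xi_i)=(w_i,\zeta_i)$, we have $(y,\eta)=(w_1,\zeta_1,w_2,\zeta_2,w_3,\zeta_3)$. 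Comparing with the description of $\dot N^*\Delta$ in Proposition \ref{delta symbol}, membership $(y,\eta)\in\dot N^*\Delta$ holds exactly when $w_1=w_2=w_3$ and $\zeta_1+\zeta_2+\zeta_3=0$. Together with $\xi_1+\xi_2+\xi_3=0$ and $\xi_i\neq 0$, which are already built into $\dot N^*_x\Delta$ and triangle-goodness, these are precisely the conditions that $(t,x_1,\xi_2,\xi_3)$ — equivalently $(t,x,\xi)$ — be the data of a geodesic triple in the sense of Definition \ref{def geodesic triple}. The surviving base covector is $\tau=p(x,\xi)$, so $\mathcal C\circ\dot N^*\Delta$ is exactly the set in the statement and the proposition follows.

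The one thing to get right is the bookkeeping of the twist/sign conventions in the composition together with the verification that it is genuinely, not merely formally, defined with no hidden wavefront mass over the zero section — both of which are controlled by the single input $p(x,\xi)\in(0,\infty)^3$ on $\esssupp A$. Everything else is the routine unwinding of the Hamilton-flow description of $\mathcal C$ against the linear-algebra description of $\dot N^*\Delta$, which is precisely the step that converts ``$\mathcal C$ composed with $\dot N^*\Delta$'' into ``geodesic triple.''
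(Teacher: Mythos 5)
Your proof is correct and follows essentially the same route as the paper's: temperedness from $\check\mu_a$ being the inverse Fourier transform of a tempered measure, then the wavefront composition bound via \cite[Corollary 1.3.8]{DuistermaatFIOs} and unwinding $\mathcal C\circ\dot N^*\Delta$ against Definition \ref{def geodesic triple}. You supply more detail than the paper at two places the paper treats as implicit (verifying that the zero-section conditions for the composition are vacuous because $p(x,\xi)$ lies in the open positive octant, and spelling out the Hamilton-flow translation into the geodesic-triple condition), but there is no substantive difference in approach.
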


\begin{proof}
	We are already aware that $S \circ \delta_\Delta$ is a tempered-distribution since it is the inverse Fourier transform of a tempered measure $\mu_a$ on $\R^3$. By \cite[Corollary 1.3.8]{DuistermaatFIOs}, we have
	\begin{align*}
		\WF(S \circ \delta_{\Delta}) &\subset \mathcal C \circ \dot N^*\Delta \\
		&= \{(t,p(x,\xi)) : (x,\xi) \in \esssupp A \cap \dot N^*\Delta \text{ such that } G^t(x,\xi) \in \dot N^*\Delta \}
	\end{align*}
	where $\mathcal C$ is the canonical relation of $S$ from \eqref{S canonical relation}. The proposition follows from the definition of $G^t$ and Definition \ref{def geodesic triple}.
\end{proof} 

Finally, we state the proposition which will comprise the bulk of our argument for Theorem \ref{main theorem triangle-good}.

\begin{proposition}\label{main composition prop} Suppose the conical support of $a$ is triangle-good. The restriction of the composition $S \circ \delta_{\Delta}$ to the open cube $(-\inj M, \inj M)^3$ is a Lagrangian distribution in class $I^{2n - \frac94}((-\inj M, \inj M)^3 ; \dot T_0^* \R^3)$ with principal symbol having half-density part
\[
	(2\pi)^{-2n + \frac94} \vol M a(\tau) \vol F^{-1}(\tau) |d\tau|^{1/2}.
\]
\end{proposition}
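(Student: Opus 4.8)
The plan is to compute the symbolic data of the second composition $S \circ \delta_\Delta$ using the clean composition calculus of Duistermaat and Guillemin, starting from the symbol of $S$ recorded in Proposition \ref{prop first composition} and the symbol of $\delta_\Delta$ recorded in Proposition \ref{delta symbol}. First I would verify that, after restricting to the cube $(-\inj M, \inj M)^3$, the composition $\mathcal{C} \circ \dot N^*\Delta$ lands in $\dot T_0^*\R^3$: for $|t_j| < \inj M$ the only geodesic triples (Definition \ref{def geodesic triple}) that can occur are the ones where each $\exp(t_j H_p)(x_1,\xi_j)$ already sits over $x_1$ itself, which forces $t_j = 0$ (since $|\xi_j| \neq 0$ and $|t_j|$ is below the injectivity radius, the geodesic cannot return), so $G^t(x,\xi) \in \dot N^*\Delta$ with $\xi_1+\xi_2+\xi_3=0$ already at $t=0$; hence the base point in $\R^3$ is free but the covector is $p(x,\xi) = (|\xi_1|,|\xi_2|,|\xi_3|)$ ranging over all of the triangle-good cone, i.e.\ the fiber $\dot T_0^*\R^3$. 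This gives the Lagrangian $\dot T_0^*\R^3$ and, combined with Proposition \ref{wavefront set prop}, shows the restriction is genuinely Lagrangian there.

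Next I would pin down the order. The general clean-composition formula adds the orders of the two factors and corrects by $e/2$, where $e$ is the excess of the (clean) fiber product; the orders here are $(n-3)/4$ for $S$ (Proposition \ref{prop first composition}) and $n/4$ for $\delta_\Delta$ (Proposition \ref{delta symbol}), against a target Lagrangian $\dot T_0^*\R^3$ in $\R^3$ of the expected dimension. Matching against the stated order $2n - \tfrac94$ fixes the excess, and the cleanness of the composition in the triangle-good regime is exactly what Proposition \ref{prop first composition} was set up to guarantee. I would then carry out the symbol computation: parametrize $\dot N^*\Delta$ by $(x_1,\xi_2,\xi_3)$ with half-density symbol $(2\pi)^{-n/4}|g(x_1)|^{-1/4}|dx_1\,d\xi_2\,d\xi_3|^{1/2}$ and $\Lambda = \mathcal{C}\circ\dot N^*\Delta$ by $(t,x_1,\xi_2,\xi_3)$ with half-density symbol $(2\pi)^{-(n-3)/4} a(p)\,|g(x_1)|^{-1/4}|dt\,dx_1\,d\xi_2\,d\xi_3|^{1/2}$; the composed symbol on $\dot T_0^*\R^3$ is obtained by pushing forward the product of these half-densities (with a Leray-type factor coming from the fiber integral over the clean intersection, divided by $|dt|^{1/2}$ since $t$ is set to $0$, and a Maslov contribution I will not track) along the map to $(\tau,0;d_\tau\text{-variable})$. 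The key point is that integrating $|g(x_1)|^{-1/2}\,dx_1$ over $M$ produces $\vol M$ (the intrinsic Riemannian volume, since $|g(x_1)|^{1/2}dx_1$ is the volume density and the two $|g|^{-1/4}$ factors from the two $\delta_\Delta$-type symbols combine with the coordinate Jacobians correctly), while integrating out the fiber variables $(\xi_2,\xi_3)$ subject to $F(\xi_1,\xi_2,\xi_3)=\tau$ with $\xi_1 = -\xi_2-\xi_3$ produces exactly $\vol F^{-1}(\tau)$, the Leray measure of the level set of $F$ from \eqref{def F}. Tracking the powers of $2\pi$ through Fourier inversion and the composition normalizations should land on the stated constant $(2\pi)^{-2n+\frac94}$.

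The main obstacle I anticipate is the bookkeeping in the clean composition: correctly identifying the excess $e$, the orientation/density conventions in Duistermaat–Guillemin's composition formula (which half-densities get divided by which, and how the fiber-density on the clean intersection manifold enters), and confirming that the fiber integral of the two Leray-type densities genuinely reassembles into the single Leray density $d_F$ on $F^{-1}(\tau)$ rather than some Euclidean-measure variant — this is precisely the subtlety flagged in the remark after the definition of Leray measure. I would handle the density calculation by working in the explicit coordinates $(x_1,\xi_2,\xi_3)$ used in Propositions \ref{delta symbol} and \ref{prop first composition}, writing down the defining function whose zero set cuts out the clean intersection (namely $G^t(x,\xi)$ lying in $\dot N^*\Delta$, which on the cube reduces to $t=0$ together with the already-satisfied conormal condition), and reading off the Leray factor as a Jacobian determinant exactly as in the proof of Proposition \ref{delta symbol}; the $\vol F^{-1}(\tau)$ factor then appears because the residual integration over $(\xi_2,\xi_3)$ with $|\xi_j|$ prescribed is, by definition \eqref{Leray density definition}, the Leray integral of $F$. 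The $a(\tau)$ factor simply survives from the symbol of $S$ since $p(x,\xi) = \tau$ on the relevant piece of the canonical relation. Appendices \ref{DENSITIES} and \ref{SYMBOL CALCULUS} and the formulas of Section \ref{COMPOSITION FORMULA} are exactly the tools I would invoke to make each of these steps precise.
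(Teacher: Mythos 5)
Your strategy matches the paper's: apply the clean composition calculus of Duistermaat--Guillemin, parametrize the fiber by $(t, x_1, \xi_2, \xi_3)$, use the injectivity-radius restriction to isolate the $t = 0$ component of $\mathcal C \circ \dot N^*\Delta$ in $\dot T_0^*\R^3$, and extract $\vol M \cdot \vol F^{-1}(\tau)$ by reducing the excess-fiber integral to geodesic normal coordinates.

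However, there is a genuine gap in your cleanness step. You assert that ``the cleanness of the composition in the triangle-good regime is exactly what Proposition \ref{prop first composition} was set up to guarantee,'' but that proposition only identifies the canonical relation and symbol of $S$; it does not establish that $\mathcal C$ composes cleanly with the second copy of $\dot N^*\Delta$. What must actually be checked is the hypothesis of Lemma \ref{symbol composition lem}: the three Hamilton vector fields $H_p(x_1,\xi_1)\oplus 0\oplus 0$, $0\oplus H_p(x_1,\xi_2)\oplus 0$, $0\oplus 0\oplus H_p(x_1,\xi_3)$ must span a subspace of $T(T^*M^3)$ intersecting $T\dot N^*\Delta$ trivially. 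The paper verifies this by pushing forward to the base in geodesic normal coordinates, where the condition forces $t_1'\xi_1/|\xi_1| = t_2'\xi_2/|\xi_2| = t_3'\xi_3/|\xi_3|$; since the $\xi_j$ are the sides of a nondegenerate triangle (this is where triangle-goodness enters the argument), any two are linearly independent and all $t_j'$ vanish. Without this check the clean calculus simply does not apply. Relatedly, you propose to ``match against the stated order to fix the excess,'' which is circular: once cleanness is in hand, Lemma \ref{symbol composition lem}(2) gives the excess directly as $e = \dim M^3 - 3 = 3n - 3$, and then $\ord(S\circ\delta_\Delta) = \tfrac{n-3}{4} + \tfrac{n}{4} + \tfrac{3n-3}{2} = 2n - \tfrac{9}{4}$ comes out of the Duistermaat--Guillemin order formula rather than being read backwards from the answer. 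These two omissions are the same omission wearing different hats, and filling them is the geometric heart of the proof.
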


The proof of this proposition will be the subject of the next two sections and involve the clean composition calculus of Duistermaat and Guillemin \cite{DG}. For now, we show how the proposition implies Theorem \ref{main theorem triangle-good}.

\begin{proof}[Proof of Theorem \ref{main theorem triangle-good}] To distill the asymptotics of Theorem \ref{main theorem triangle-good} from Proposition \ref{main composition prop}, we test $S \circ \delta_\Delta$ against the oscillating half-density
\[
	\phi_\tau(t) = \check \rho(t) e^{-i\langle t, \tau \rangle} |dt|^{1/2},
\]
where $\rho$ is as in the theorem, with $\int \rho(\tau) \, d\tau = 1$ and $\supp \check \rho \subset (-\inj M, \inj M)^3$. On one hand, \eqref{strategy main composition} affords
\[
	(S \circ \delta_\Delta, \phi_\tau) = (2\pi)^3 \int_{\R^3} e^{-i\langle t, \tau \rangle} \check \mu_a(t) \check \rho(t) \, dt = \rho * \mu_a(\tau).
\]
On the other hand, \cite[(25.1.13)]{HormanderIV} and its preceding discussion yields
\[
	\check\rho S \circ \delta_\Delta = (2\pi)^{-9/4} \left(\int_{\R^3} e^{i\langle t,\tau \rangle} \nu(\tau) \, d\tau \right) |dt|^{1/2}
\]
where by Proposition \ref{main composition prop}
\[
	\nu(\tau) \equiv \zeta \check \rho(0) (2\pi)^{-2n + 9/4} \vol M a(\tau) \vol F^{-1}(\tau) \mod S^{2n-4}(\R^3 \setminus 0),
\]
where $S^{2n-4}(\R^3 \setminus 0)$ denotes the symbol class of order $2n-4$ and $\zeta$ is a complex unit coming from the neglected Maslov factors. By Fourier inversion,
\[
	(S \circ \delta_\Delta, \phi_\tau) = (2\pi)^{3-9/4} \nu(\tau) = \zeta \check \rho(0) (2\pi)^{3-2n} \vol M a(\tau) \vol F^{-1}(\tau) + O(|\tau|^{2n-4}).
\]
Next, we argue $\zeta = 1$. Select $a \geq 0$ and $\rho$ nonnegative. Then, $\rho * \mu_a(\tau)$ is real and positive and hence so must be $\zeta$. We also have $\check \rho(0) = (2\pi)^{-3}$ since $\int \rho = 1$. Putting everything together, we have
\[
	\rho * \mu_a(\tau) =(2\pi)^{3-9/4} \nu(\tau) = (2\pi)^{-2n} \vol M a(\tau) \vol F^{-1}(\tau) + O(|\tau|^{2n-4}).
\]
The theorem follows after selecting $a \equiv 1$ on the triangle-good cone $\Gamma$ and $a \equiv 0$ outside of a triangle-good cone containing $\Gamma$ in its interior.
\end{proof}


\section{Two Convenient Composition Formulas}\label{COMPOSITION FORMULA}

We now state and prove two convenient composition formulas in the service of Propositions \ref{prop first composition} and \ref{main composition prop}. This will require Duistermaat and Guillemin's \cite{DG} symbol calculus of Fourier integral operators with cleanly composing canonical relations (see also \cite{HormanderIV}). This and the accompanying half-density formalism is reviewed in Appendices \ref{DENSITIES} and \ref{SYMBOL CALCULUS}.

\subsection{A composition formula for Proposition \ref{prop first composition}} \label{Composition Formula 1}

Let $Y$ be a smooth, compact manifold without boundary with $\dim Y = n$. Fix a conic Lagrangian submanifold $\Lambda \subset T^*Y$. Let $p = (p_1,\ldots,p_k)$ be a list of positive-homogeneous first-order symbols with Hamilton bracket $\{p_i, p_j \} = 0$ for all $i,j = 1,\ldots,k$. Note, each pair of vectors $H_{p_i}$ and $H_{p_j}$ commute since $\{p_i, p_j \} = 0$.

Consider the canonical relation
\[
	\mathcal C = \{(t,p(x,\xi), G^{-t}(x,\xi) ; x,\xi) : t \in \R^k, \ (x,\xi) \in \dot T^*Y \} \subset \dot T^*(\R^k \times Y) \times \dot T^*Y
\]
where
\[
	G^t = \exp(t_1 H_{p_1}) \circ \cdots \circ \exp(t_k H_{p_k}) = \exp(t_1 H_{p_1} + \cdots + t_k H_{p_k}).
\]
In what follows, we consider a parametrization $\lambda \mapsto (x(\lambda), \xi(\lambda))$ of $\Lambda$ by $\lambda = (\lambda_1,\ldots,\lambda_n) \in \R^n$. The composition
\[
	\mathcal C \circ \Lambda = \{(t,p(x, \xi), G^{-t}(x,\xi)) : t \in \R^k, \ (x,\xi) \in \Lambda \}
\]
then admits a parametrization by $t$ and $\lambda$.

\begin{lemma}\label{symbol composition lem trans}
	The composition of $\mathcal C$ and $\Lambda$ is transversal. Furthermore, fix homogeneous half-densities
	\[
		\sigma_{\mathcal C}(t,x,\xi) = a(t,x,\xi) |dt \, dx \, d\xi|^{1/2} \qquad \text{ and } \qquad \sigma_{\Lambda}(\lambda) = b(\lambda) |d\lambda|^{1/2}
	\]
	on $\mathcal C$ and $\Lambda$, respectively. Then the composition $\sigma_{\mathcal C} \circ \sigma_{\Lambda}$ is the half-density
	\[
		 a(t,x(\lambda),\xi(\lambda)) b(\lambda) \, |dt \, d\lambda|^{1/2}
	\]
	via the parametrization of $\mathcal C \circ \Lambda$ by $t$ and $\lambda$.
\end{lemma}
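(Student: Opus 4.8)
The strategy is to reduce the computation to a local, coordinate-based verification using the oscillatory-integral description of Lagrangian distributions reviewed just before Proposition \ref{delta symbol}, and then appeal to the transversal composition formula of Duistermaat--Guillemin (recorded in Appendix \ref{SYMBOL CALCULUS}). First I would verify transversality of the composition $\mathcal C \circ \Lambda$. Recall that $\mathcal C \circ \Lambda$ is transversal if, at each point of the fiber product, the tangent space to $\mathcal C \times \Lambda$ surjects onto the tangent space to $\dot T^*Y \times \dot T^*Y$ under the map $(\text{pr}_{\dot T^*Y}, \text{pr}_{\dot T^*Y})$ collapsing the middle factors. Here this is essentially automatic: the $\dot T^*Y$-factor of $\mathcal C$ is the \emph{graph} of the family of maps $(t,x,\xi) \mapsto (x,\xi) \mapsto G^{-t}(x,\xi)$, so for a fixed $t$ the ``output'' copy of $\dot T^*Y$ in $\mathcal C$ is literally an embedded copy of $\dot T^*Y$ diffeomorphic onto itself via the symplectomorphism $G^{-t}$, while $\Lambda$ is being intersected inside the ``input'' copy. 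Concretely, a point in the fiber product is determined by $(t,(x,\xi))$ with $(x,\xi) \in \Lambda$, and the relevant tangent map sends $\partial_t$, together with tangent vectors to $\Lambda$, onto a complement of the diagonal-type condition; since $G^{-t}$ is a diffeomorphism and the constraint only matches the ``input'' point to $\Lambda$, transversality is immediate, and in particular the excess $e = 0$. This also shows that $(t,\lambda)$ is a genuine coordinate system on $\mathcal C \circ \Lambda$, as asserted.

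\textbf{Main computation.} With transversality in hand, the composed symbol is computed by the formula for transversal compositions (Appendix \ref{SYMBOL CALCULUS}, following \cite{DG} and \cite[\S25.2]{HormanderIV}): on the fiber product, which here is diffeomorphic to $\mathcal C$ itself via $(t,x,\xi) \mapsto (t,x,\xi,x,\xi)$, one pulls back the product half-density $\sigma_{\mathcal C} \boxtimes \sigma_{\Lambda}$ and pushes it forward (integrates out) along the fibers of the projection from the fiber product to $\mathcal C \circ \Lambda$. Since $e = 0$, those fibers are points, so the ``push-forward'' is just a change of variables: no integration is performed, and the symbol is the pull-back of $\sigma_{\mathcal C} \boxtimes \sigma_{\Lambda}$ to $\mathcal C \circ \Lambda$ expressed in the coordinates $(t,\lambda)$. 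The one genuine point to check is the density bookkeeping. The fiber product sits inside $\mathcal C \times \Lambda$ cut out by the $2n$ equations identifying the ``input'' $(x,\xi)$ of $\mathcal C$ with the point of $\Lambda$; parametrizing $\mathcal C$ by $(t,x,\xi)$ and $\Lambda$ by $\mu \in \R^n$, these equations are $(x,\xi) = (x(\mu),\xi(\mu))$, whose Leray factor relative to the coordinates $(t,x,\xi,\mu)$ is $1$ once one pairs each $x_i,\xi_i$ equation against the corresponding variable. Hence the half-density $\sqrt{|dt\, dx\, d\xi|}\,\sqrt{|d\mu|}$, restricted to the fiber product and expressed via the coordinates $(t,\mu)$ (setting $\mu = \lambda$), becomes exactly $|dt\, d\lambda|^{1/2}$, with the amplitudes evaluated at the matched point, i.e. $a(t,x(\lambda),\xi(\lambda)) b(\lambda)$. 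Combined with the vanishing excess (so no extra normalization constants or fiber integrals enter) and the standard fact that the transversal composition introduces no $2\pi$ factors beyond those already absorbed into the Lagrangian normalizations, this yields the stated formula
\[
	(\sigma_{\mathcal C} \circ \sigma_{\Lambda})(t,\lambda) = a(t,x(\lambda),\xi(\lambda))\, b(\lambda)\, |dt\, d\lambda|^{1/2}.
\]

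\textbf{Expected obstacle.} The routine part is the symplectic-geometric transversality check; the only real bookkeeping hazard is making sure the half-density identifications in the clean (here transversal) composition calculus are tracked with the correct normalization, since the Duistermaat--Guillemin prescription involves pulling the product symbol back to the fiber product and then dividing by a canonically defined density on the fibers of the excess. Because the excess is zero, this density is trivial, but writing the argument so that this is manifest---rather than invoking a black-box formula---requires carefully setting up the two Lagrangians in compatible local coordinates and confirming that the natural Leray densities coincide. I would handle this by choosing, near a reference point, coordinates on $Y$ adapted so that $\Lambda$ is parametrized by $\lambda$ and $\mathcal C$ by $(t,x,\xi)$, writing the composition as an oscillatory integral in the obvious phase variables, and observing that stationary phase in the matching variables is exact (Gaussian of rank $2n$ with unit Hessian determinant after the adapted choice), which reproduces the claimed half-density with no residual constant. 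This is exactly the content deferred to Appendix \ref{SYMBOL CALCULUS}, so in the body I would state Lemma \ref{symbol composition lem trans} and refer there for the density verification.
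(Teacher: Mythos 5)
Your proposal is correct and appeals to the same Duistermaat--Guillemin transversal composition calculus that the paper invokes; the transversality verification is morally identical to the paper's brief observation that $F$ is a transverse intersection of $\mathcal C$ with $T^*(\R^k \times Y) \times \Lambda$. Where you genuinely diverge is in the half-density bookkeeping. The paper fixes the coordinate frame $\mathbf e$ on $T_b\Lambda$, completes it to a symplectic basis $(\mathbf e, \mathbf f)$ of $T_b T^*Y$, and explicitly tracks a basis of $T_{(a;b)}F$ through the exact sequence $0 \to F \to \mathcal C \times \Lambda \overset{\beta}{\to} Y \to 0$, verifying at the end that the chosen completion pushes forward under $\beta$ to the symplectic basis, on which the symplectic half-density evaluates to $1$. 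You instead compute a Leray density against the $2n$ defining equations $(x,\xi) = (x(\lambda),\xi(\lambda))$ in the product coordinates $(t,x,\xi,\lambda)$ on $\mathcal C \times \Lambda$, reading off the factor $1$ from the identity Jacobian in $(x,\xi)$. That shortcut is legitimate, but it quietly depends on the fact that in canonical coordinates on $T^*Y$ the Liouville (symplectic) density coincides with the coordinate density $|dx\,d\xi|$, so that the target density in the Leray quotient is the one the DG prescription requires; this is exactly the step the paper's explicit $\mathbf f$-completion makes visible. Both routes are correct: the paper's basis argument is more self-contained and shows where the symplectic normalization enters, while your Leray packaging is more compact once the Liouville identification is spelled out. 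If you adopt your version in the write-up, state explicitly that you are using canonical coordinates and that the symplectic half-density on $T_b T^*Y$ is $|dx\,d\xi|^{1/2}$ there, since omitting that is the one place a reader could lose the normalization.
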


We outline the symbol calculus for the transversally composing case in Appendix \ref{SYMBOL CALCULUS} as a special case of the clean calculus. For a more direct approach, see \cite{DuistermaatFIOs, HormanderPaper}

\begin{proof}
	The fiber product of $\mathcal C$ and $\Lambda$ is
	\[
		F = \{(t,p(x,\xi), G^{-t}(x,\xi); x,\xi) : t \in \R^k, (x,\xi) \in \Lambda\}
	\]
	and can be parametrized by $t$ and $\lambda$ by $(x,\xi) = (x(\lambda),\xi(\lambda))$. One quickly checks that $F$ is the transverse intersection of $\mathcal C$ and $T^*(\R^k \times Y) \times \Lambda$ and hence $\mathcal C$ and $\Lambda$ compose transversally as per Appendix \ref{SYMBOL CALCULUS}.
	
	Next, we compute the half-density part of the symbol. Fix $(a;b) \in F$, where
	\[
		a = (t,p(x,\xi),G^{-t}(x,\xi)) \qquad \text{ and } \qquad b = (x,\xi).
	\]
	It suffices to identify the half-density on $T_{(a;b)} F$ induced by the short exact sequence
	\[
		0 \longrightarrow T_{(a;b)} F \longrightarrow T_{(a;b)}\mathcal C \times T_b \Lambda \overset \beta \longrightarrow T_bT^*Y \longrightarrow 0
	\]
	where $\beta((a';b'),c') = b' - c'$. Then, since $F \to \mathcal C \circ \Lambda$ is a diffeomorphism, $\sigma_{\mathcal C} \circ \sigma_\Lambda$ will be precisely our half density on $F$ up to a reparametrization.
	
	To proceed, we set some bases. Let $\mathbf e = (e_1,\ldots,e_n)$ denote the basis for $T_b \Lambda$ given by the coordinate vector frame
	\[
		e_j = \frac{\partial}{\partial \lambda_j}.
	\]
	Since $\Lambda$ is Lagrangian, we complete $\mathbf e$ to a symplectic basis $(\mathbf e, \mathbf f) = (e_1,\ldots,e_n, f_1, \ldots, f_n)$ of $T_b T^*Y$. Next, let $\mathbf g = (g_1,\ldots,g_k)$ denote the standard basis for $T_t \R^k$. Now, $T_{(a;b)} F$ has basis
	\begin{align*}
		&(g_j,0,*;0) &&j = 1,\ldots,k, \\
		&(0,*,*;e_j) &&j = 1,\ldots,n.
	\end{align*}
	Here, the asterisks denote vectors which are determined by others in the tuple. This basis is mapped to a linearly independent list in $T_{(a;b)} \mathcal C \times T_b \Lambda$, specifically
	\begin{align*}
		&(g_j,0,*;0) \times 0 &&j = 1,\ldots,k, \\
		&(0,*,*;e_j) \times e_j &&j = 1,\ldots,n.
	\end{align*}
	We complete this to a basis of $T_{(a;b)} \mathcal C \times T_b \Lambda$ by adding the vectors
	\begin{align*}
		&(0,*,*;e_j) \times 0 &&j = 1,\ldots,n, \\
		&(0,*,*;f_j) \times 0 &&j = 1,\ldots,n.
	\end{align*}
	After a determinant-$\pm 1$ transformation, we see the evaluation of $\sigma_{\mathcal C} \otimes \sigma_{\Lambda}$ on this basis is
	\[
		a(t,x(\lambda),\xi(\lambda)) b(\lambda).
	\]
	The extension maps forward through $\beta$ to a symplectic basis for $Y$, on which the symplectic half-density simply evaluates to $1$. We conclude that
	\[
		\sigma_{\mathcal C} \circ \sigma_\Lambda(\mathbf g, \mathbf e) = a(t, x(\lambda), \xi(\lambda)) b(\lambda),
	\]
	and hence
	\[
		\sigma_{\mathcal C} \circ \sigma_\Lambda = a(t, x(\lambda), \xi(\lambda)) b(\lambda) |dt \, d\lambda|^{1/2}
	\]
	as desired.
\end{proof}

\subsection{A composition formula for Proposition \ref{main composition prop}}


Let $Y$, $\Lambda$, and $p$ be as in the previous section, but additionally assume the list of Hamilton vector fields
\[
	H_p = (H_{p_1}, \ldots, H_{p_k})
\]
is linearly independent and have span which intersects the fibers of $T\Lambda$ trivially. We will also instead consider the canonical relation
\[
	\mathcal C = \{(t,p(x,\xi); G^t(x,\xi)) : t \in \R^k, \ (x,\xi) \in \Lambda \} \subset \dot T^*\R^k \times \dot T^*Y.
\]

\begin{lemma}\label{symbol composition lem}
	Let $\mathcal C$ and $\Lambda$ be as above. The following are true.
	\begin{enumerate}
	\item There is an isolated component of
	\[
		\mathcal C \circ \Lambda = \{(t,p(x,\xi)) \in \dot T^* \R^k : (x,\xi) \in \Lambda, \ G^t(x,\xi) \in \Lambda \}
	\]
	contained in $\dot T_0^*\R^k$.
	\item At this component, $\mathcal C$ and $\Lambda$ compose cleanly with excess
	\[
		e = \dim Y - k.
	\]
	\item Suppose $\Lambda$ is parametrized by $\lambda = (\lambda_1,\ldots,\lambda_n) \in \R^n$, and that we have homogeneous half-densities
	\[
		\sigma_{\mathcal C} = b(t,\lambda) |dt \, d\lambda|^{1/2} \qquad \text{ and } \qquad \sigma_{\Lambda} = a(\lambda) |d\lambda|^{1/2}
	\]
	on $\mathcal C$ and $\Lambda$, respectively. Then we have
	\[
		\sigma_{\mathcal C} \circ \sigma_\Lambda = \left(\int_{p^{-1}(\tau)} b(0,\lambda) a(\lambda) \, d_p(\lambda) \right) |d\tau|^{1/2}
	\]
	at $(0,\tau) \in \dot T^*\R^k$, where $d_p$ denotes the Leray density on $p^{-1}(\tau)$.
	\end{enumerate}
\end{lemma}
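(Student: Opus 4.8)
I would handle the geometric parts (1) and (2) together by studying the map $\Psi\colon\R^k\times\Lambda\to\dot T^*Y$ given by $\Psi(t,(x,\xi))=G^t(x,\xi)$. At a point of $\{0\}\times\Lambda$ its differential carries $\partial/\partial t_i$ to $H_{p_i}(x,\xi)$ and restricts to the inclusion $T_{(x,\xi)}\Lambda\hookrightarrow T_{(x,\xi)}(\dot T^*Y)$, so its image is $\mathrm{span}\,H_p(x,\xi)\oplus T_{(x,\xi)}\Lambda$, a direct sum of dimension $k+n$ by the hypotheses that $H_p$ is linearly independent and meets the fibers of $T\Lambda$ trivially. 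Since $\dim(\R^k\times\Lambda)=k+n$, the map $\Psi$ is an immersion along $\{0\}\times\Lambda$, hence a diffeomorphism onto a locally embedded submanifold of $\dot T^*Y$ containing $\Lambda$. The fiber product of $\mathcal C$ with $\Lambda$ is $\{(t,(x,\xi))\in\R^k\times\Lambda:G^t(x,\xi)\in\Lambda\}=\Psi^{-1}(\Lambda)$, which therefore coincides with $\{0\}\times\Lambda$ in a neighborhood of $\{0\}\times\Lambda$; upgrading this local statement to a neighborhood of the whole component uses the homogeneity of $p$ and the compactness of $Y$, which confine a level set $\{p=\tau\}\cap\Lambda$ to a compact set. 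Projecting to $\dot T^*\R^k$ produces the component $\{(0,p(x,\xi)):(x,\xi)\in\Lambda\}\subset\dot T_0^*\R^k$, which is thus isolated, proving (1). For (2) one checks at a point $\gamma=(0,p(x,\xi);x,\xi)$ that the fiber product $T_\gamma\mathcal C\times_{T(\dot T^*Y)}T_{(x,\xi)}\Lambda$ equals the tangent of $F=\{0\}\times\Lambda$ — the $\R^k$-component is forced to vanish by $\mathrm{span}\,H_p\cap T\Lambda=0$ — so the composition is clean, and since $\dim F=n$ while a transversal composition would yield a Lagrangian of dimension $k$ in $\dot T^*\R^k$, the excess is $n-k=\dim Y-k$. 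The same hypotheses, via $\ker dp_i=H_{p_i}^{\perp_\omega}$ and $(T\Lambda)^{\perp_\omega}=T\Lambda$, show $p\circ(x,\xi)\colon\Lambda\to\R^k$ is a submersion near the component, so $p^{-1}(\tau)$ and its Leray density $d_p$ in \eqref{Leray density definition} are well-defined.

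For part (3) I would apply the Duistermaat--Guillemin clean composition formula recorded in Appendix \ref{SYMBOL CALCULUS} (cf.\ \cite{DG,HormanderIV}): the composed half-density $\sigma_{\mathcal C}\circ\sigma_\Lambda$ at $(0,\tau)$ is the fiber integral, over the $(n-k)$-dimensional fibers of $F\to\mathcal C\circ\Lambda$, of the object obtained from $\sigma_{\mathcal C}\otimes\sigma_\Lambda$ by dividing out the symplectic half-density on $T(\dot T^*Y)$ and running it through the exact sequences $0\to TF\to T\mathcal C\oplus T\Lambda\overset{\beta}{\to}T(\dot T^*Y)\to\coker\beta\to 0$ and $0\to T(\text{fiber})\to TF\to T(\mathcal C\circ\Lambda)\to 0$. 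Exactly as in the proof of Lemma \ref{symbol composition lem trans}, the symplectic half-density evaluates to $1$ on a symplectic basis and drops out, so the whole computation reduces to linear algebra at a single point of $\Lambda$, which in the parametrization corresponds to $\lambda\in p^{-1}(\tau)$. There I would choose a basis of $T_\lambda\Lambda$ adapted to the submersion $p\circ(x,\xi)$ — the first $n-k$ vectors spanning $T_\lambda(p^{-1}(\tau))$ and the last $k$ mapping under $d(p\circ(x,\xi))$ to the coordinate basis of $T_\tau\R^k$ — complete it to a symplectic basis of $T_\lambda(\dot T^*Y)$, and adjoin the coordinate basis of $T_0\R^k$ (carried by $d\Psi$ to $H_p$). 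Evaluating $\sigma_{\mathcal C}\otimes\sigma_\Lambda$, whose amplitude restricts along $F$ to $b(0,\lambda)a(\lambda)$, against the resulting bases and chasing the exact sequences: the $\partial/\partial t$-directions cancel the $H_p$-directions, the symplectic completion contributes $1$, the $k$ complementary $\lambda$-directions pair with $|d\tau|^{1/2}$, and the leftover is $b(0,\lambda)a(\lambda)$ times $|\det\partial(p\circ(x,\xi))/\partial\lambda'|^{-1}$ times the fiber volume — that is, $b(0,\lambda)a(\lambda)\,d_p(\lambda)$ in the notation of \eqref{Leray density definition}. Integrating over $p^{-1}(\tau)$ yields $\big(\int_{p^{-1}(\tau)}b(0,\lambda)a(\lambda)\,d_p(\lambda)\big)|d\tau|^{1/2}$.

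\textbf{Main obstacle.} Parts (1) and (2) are essentially linear algebra driven by the hypotheses on $H_p$; the delicate step is (3), namely running the half-density/density bookkeeping of the clean calculus without error and, above all, recognizing the density that the Duistermaat--Guillemin construction deposits along the fibers of $F\to\mathcal C\circ\Lambda$ as \emph{exactly} the Leray density $d_p$ — with the Jacobian $|\det\partial p/\partial\lambda'|^{-1}$ appearing to precisely the first power and with no stray factor of $2\pi$ or symplectic volume left over. I expect the verification to rest on the symplectic identification $T_\lambda(p^{-1}(\tau))\cong\big(T_\lambda(\dot T^*Y)/\im\beta\big)^*$ underlying the clean intersection, combined with the characterizing coarea identity $\int_{\R^n}f=\int_{\R^k}\big(\int_{p^{-1}(\tau)}f\,d_p\big)d\tau$ for $d_p$; this is also what forces the adapted choice of basis above.
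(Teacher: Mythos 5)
Your proposal is correct and follows essentially the same route as the paper: parts (1) and (2) are the same linear-algebra argument (you phrase it via the immersion $\Psi(t,(x,\xi))=G^t(x,\xi)$, the paper via a defining function $f$ for $\Lambda$, but the key step — $\mathrm{span}\,H_p\cap T\Lambda=0$ forces $t'=0$ — is identical), and part (3) is the same exact-sequence chase after choosing coordinates on $\Lambda$ adapted to the submersion $p|_\Lambda$ and completing $H_p$ to a symplectic basis, which is exactly what the paper's Lemmas~\ref{lambda coordinates lem} and~\ref{basis lem} set up. The only cosmetic difference is that the paper normalizes the $\lambda$-coordinates so that $dp=[I\ 0]$, absorbing the Jacobian $|\det\partial p/\partial\lambda'|^{-1}$ into the Leray density from the start, whereas you carry it along explicitly.
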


We split the proof into two parts, one deals with parts (1) and (2) and the other deals with part (3) and requires an additional lemma. 

\begin{proof}[Proofs of Lemma \ref{symbol composition lem} (1) and (2)] The fiber product of $\mathcal C$ and $\lambda$ is
\[
	F = \{(t,x,\xi) : (x,\xi) \in \Lambda, \ G^t(x,\xi) \in \Lambda\}
\]
with maps
\begin{align*}
	F \to \mathcal C &: (t,x,\xi) \mapsto (t,p(x,\xi) ; G^t(x,\xi)) \qquad \text{ and } \\
	F \to \Lambda &: (t,x,\xi) \mapsto (x,\xi).
\end{align*}
Let $f : T^*Y \to \R^n$ be a defining function of $\Lambda$, meaning both $f = 0$ and $df$ has full-rank on $\Lambda$. Then, we may rewrite $F$ as
\[
	F = \{(t,x,\xi) : (x,\xi) \in \Lambda, \ f(G^t(x,\xi)) = 0\}.
\]
First, since the vectors in $H_p$ are a basis for a space with trivial intersection with $T_{(x,\xi)} \Lambda = \ker df$, $df(H_p)$ is a linearly independent list in $\R^n$. In particular for fixed $(x,\xi) \in \Lambda$, $t \mapsto f(G^t(x,\xi))$ has injective differential, which implies (1).

To verify (2), we require $T_{(0,x,\xi)} F$ to be the fiber product of $T_{(0,p(x,\xi))} \mathcal C$ and $T_{(x,\xi)} \Lambda$. First, we determine
\[
	T_{(0,p(x,\xi); x,\xi)} \mathcal C = \{(t',dp(x',\xi') ; (x',\xi') + t'H_p) : t' \in T_0 \R^k , \ (x',\xi') \in T_{(x,\xi)} \Lambda \}
\]
where $t'H_p = t_1' H_{p_1} + \cdots + t_k' H_{p_k}$. The fiber product of the tangent spaces is
\[
	F' = \{(t',x',\xi') : t' \in T_0\R^k , \ (x',\xi') \in T_{(x,\xi)} \Lambda, \ (x',\xi') + t'H_p \in T_{(x,\xi)} \Lambda \}.
\]
Since the vectors of $H_p$ are linearly independent and have span which intersects $T_{(x,\xi)} \Lambda$ trivially, we necessarily have $t' = 0$. Hence,
\[
	F' = \{(0,x',\xi') : (x',\xi') \in T_{(x,\xi)} \Lambda\} = T_{(0,x,\xi)} F,
\]
as desired.
\end{proof}


In preparation for the proof of part (3), we select some convenient local coordinates.

\begin{lemma}\label{lambda coordinates lem} There exist local coordinates $\lambda = (\lambda_1,\ldots,\lambda_n)$ about each point in $\Lambda$ with respect to which
\[
	dp = \begin{bmatrix}
		I & 0
	\end{bmatrix},
\]
where $I$ is the $k \times k$ identity matrix and $0$ is the $k \times (n - k)$ zero matrix.
\end{lemma}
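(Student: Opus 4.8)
The plan is to prove that the restriction $p|_\Lambda \colon \Lambda \to \R^k$ is a submersion; granting this, the submersion normal form (equivalently, the constant-rank theorem) produces about any point of $\Lambda$ coordinates $\lambda = (\lambda_1, \dots, \lambda_n)$ in which $p|_\Lambda$ is the linear projection $\lambda \mapsto (\lambda_1, \dots, \lambda_k)$, and then $dp = \begin{bmatrix} I & 0 \end{bmatrix}$ in these coordinates, which is exactly the assertion.

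To show $p|_\Lambda$ is a submersion, fix $\rho \in \Lambda$ and identify the kernel of $dp|_{T_\rho \Lambda}$. Using the coordinate-free characterization $\omega(H_{p_i}, \cdot) = dp_i$ of the Hamilton vector fields, a vector $v \in T_\rho\Lambda$ lies in this kernel precisely when $\omega(H_{p_i}(\rho), v) = 0$ for $i = 1, \dots, k$; that is, when $v \in T_\rho\Lambda \cap V^\omega$, where $V = \operatorname{span}\{H_{p_1}(\rho), \dots, H_{p_k}(\rho)\}$ and $V^\omega$ denotes its symplectic orthogonal complement in $T_\rho(T^*Y)$. By the standing hypothesis the $H_{p_i}$ are linearly independent, so $\dim V = k$ and hence $\dim V^\omega = 2n - k$.

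Now I invoke the usual symplectic linear algebra. Since $\Lambda$ is Lagrangian, $(T_\rho\Lambda)^\omega = T_\rho\Lambda$; since $\omega$ is nondegenerate, $(V^\omega)^\omega = V$. Therefore
\[
	\bigl(T_\rho\Lambda + V^\omega\bigr)^\omega = (T_\rho\Lambda)^\omega \cap (V^\omega)^\omega = T_\rho\Lambda \cap V = 0,
\]
the last equality being exactly the second standing hypothesis, that $\operatorname{span}(H_p)$ meets the fibres of $T\Lambda$ trivially. Hence $T_\rho\Lambda + V^\omega = T_\rho(T^*Y)$, and a dimension count gives $\dim(T_\rho\Lambda \cap V^\omega) = n + (2n - k) - 2n = n - k$. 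So $dp|_{T_\rho\Lambda}$ has rank $k$ at every $\rho \in \Lambda$, and $p|_\Lambda$ is a submersion.

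The argument is essentially formal, so I do not anticipate a genuine obstacle; the only point needing care is the faithful translation of the two hypotheses on $H_p$ into the rank statement, through the orthogonality identities $(A+B)^\omega = A^\omega \cap B^\omega$ and $(V^\omega)^\omega = V$. I would also observe that these are precisely the coordinates in which the Leray density $d_p$ appearing in Lemma \ref{symbol composition lem}(3) becomes $d\lambda_{k+1} \cdots d\lambda_n$, which is presumably why they are set up here.
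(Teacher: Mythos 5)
Your argument is correct and is essentially the paper's: both prove that $p|_\Lambda$ is a submersion using $\omega(H_{p_i},\cdot)=dp_i$, the Lagrangian condition $(T\Lambda)^\omega = T\Lambda$, and the hypothesis that $\operatorname{span}(H_p)\cap T\Lambda=0$, then take the $p_i$ as the first $k$ coordinates on $\Lambda$. The only stylistic difference is that the paper shows surjectivity by proving the rows $dp_i|_{T\Lambda}$ are linearly independent, whereas you compute $\dim\ker(dp|_{T\Lambda}) = n-k$ via symplectic orthogonals; these are equivalent by rank--nullity and rest on the same facts.
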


\begin{proof}
	We first claim the restriction of $p$ to $\Lambda$ has surjective differential, i.e. that the rows of the matrix with entries
	\[
		dp_i\left(\frac{\partial}{\partial \lambda_j}\right) \qquad \text{$i = 1,\ldots,k$ and $j = 1,\ldots,n$}
	\]
	are linearly independent. Given coefficients $t_1,\ldots,t_k$ and the symplectic $2$-form $\omega$ on $Y$,
	\[
		\sum_{i = 1}^k t_i p_i \left(\frac{\partial}{\partial \lambda_j}\right) = \omega\left( \frac{\partial}{\partial \lambda_j}, \sum_{i = 1}^k t_i H_{p_i} \right) \qquad \text{ for each $j = 1,\ldots,n$.}
	\]
	Since $\Lambda$ is Lagrangian, this quantity vanishes if and only if the linear combination lies in $T\Lambda$, which may only ever happen when $t_i = 0$ for each $i$ by hypothesis. This proves our claim.
	
	Next, fix $\tau$ and note $p^{-1}(\tau)$ is a smooth, codimension-$k$ submanifold of $\Lambda$. Select local coordinates $\lambda_{k+1},\ldots,\lambda_n$ of $p^{-1}(\tau)$ and extend them to coordinates
	\[
		p_1,\ldots,p_k,\lambda_{k+1}, \ldots, \lambda_n
	\]
	of $\Lambda$. The lemma follows.
\end{proof}

For each $\tau$, we denote the excess fiber over $(0,\tau) \in \mathcal C \circ \Lambda$ as
\[
	E_\tau = \{(0,x,\xi) \in F : p(x,\xi) = \tau\}.
\]
The clean composition calculus of Duistermaat and Guillemin yields a composite symbol on $\mathcal C \circ \Lambda$ given by
\[
	\sigma_{\mathcal C} \circ \sigma_{\Lambda} = \int_{E_\tau} \sigma_{\mathcal C} \boxtimes \sigma_\Lambda,
\]
where $\sigma_{\mathcal C} \boxtimes \sigma_{\Lambda}$ is a smooth object belonging to $|T_{(0,x,\xi)} E_\tau| \otimes |T_{(0,\tau)} (\mathcal C \circ \Lambda)|^{1/2}$ fiberwise and is determined by the procedure summarized in Appendix \ref{SYMBOL CALCULUS}. 

With respect to the coordinates of the lemma, we have
\[
	d_p(\lambda) = |d\lambda_{k+1} \ldots d\lambda_n|.
\]
Hence to prove (3), it suffices to show that
\[
	\sigma_{\mathcal C} \boxtimes \sigma_\Lambda = b(0,\lambda) a(\lambda) |d\lambda_{k+1} \cdots d\lambda_n| |d\tau|^{1/2},
\]
i.e. $\sigma_{\mathcal C} \boxtimes \sigma_\Lambda$ evaluates to $b(0,\lambda) a(\lambda)$ on the pair of bases,
\[
	\left( \left( \frac{\partial}{\partial \lambda_1}, \ldots, \frac{\partial}{\partial \lambda_n} \right), \left( \frac{\partial}{\partial \tau_1}, \cdots, \frac{\partial}{\partial \tau_k} \right) \right).
\]

Fix $(x,\xi) \in \Lambda$ and $\tau = p(x,\xi)$. We will identify the $t = 0$ component of $F$ with $\Lambda$. Since we will be exclusively working in the linear category, we take $\mathcal C$, $\Lambda$, and $F$ to stand in for their respective tangent spaces $T_{(0,p(x,\xi);x,\xi)} \mathcal C$, $T_{(x,\xi)} \Lambda$, and $T_{(x,\xi)} F$. We also take $Y$ to stand in for the symplectic space $T_{(x,\xi)}T^*Y$ endowed with symplectic $2$-form $\omega$. We let $E$ denote the tangent space $T_{(x,\xi)}E_\tau = \{(x',\xi') \in T_{(x,\xi)} \Lambda : dp(x',\xi') = 0\}$ of the excess fiber.

As summarized in Appendix \ref{SYMBOL CALCULUS}, we have two maps,
\begin{equation}\label{my alpha}
	\alpha : F \to \mathcal C \circ \Lambda \qquad \text{ where } \qquad (x',\xi') \mapsto (0, dp(x',\xi')),
\end{equation}
and
\[
	\beta : \mathcal C \times \Lambda \to Y \qquad \text{ where } \qquad ((a'; b'), c') \mapsto b' - c'.
\]
Associated with these maps are two exact sequences,
\begin{equation}\label{my short exact sequence}
	0 \longrightarrow F \longrightarrow \mathcal C \times \Lambda \overset{\beta}{\longrightarrow} Y \longrightarrow \coker \beta \longrightarrow 0
\end{equation}
and
\begin{equation}\label{my long exact sequence}
	0 \longrightarrow E \longrightarrow F \overset{\alpha}{\longrightarrow} \mathcal C \circ \Lambda \longrightarrow 0.
\end{equation}
These, along with the pairing of $E$ and $\coker \tau$ by the symplectic form on $Y$, yield a linear isomorphism $|\mathcal C \times \Lambda|^{1/2} \simeq |E| \otimes |\mathcal C \circ \Lambda|^{1/2}$ which takes $\sigma_{\mathcal C} \otimes \sigma_\Lambda \mapsto \sigma_{\mathcal C} \boxtimes \sigma_\Lambda$. To make this isomorphism explicit, we write down some bases for the spaces involved.

\begin{lemma}\label{basis lem}
	There exists a basis $\mathbf y = (\mathbf u, \mathbf v, H_p, \mathbf w)$ for $Y$ with the following properties.
	\begin{enumerate}
		\item Given coordinates $(\lambda_1,\ldots,\lambda_n)$ as in Lemma \ref{lambda coordinates lem},
		\[
			\mathbf u = \left( \frac{\partial}{\partial \lambda_1}, \ldots, \frac{\partial}{\partial \lambda_k} \right) \qquad \text{ and } \qquad \mathbf v = \left( \frac{\partial}{\partial \lambda_{k+1}}, \ldots, \frac{\partial}{\partial \lambda_n} \right).
		\]
		Hence, $\mathbf v$ is a basis for $E$ and $(\mathbf u, \mathbf v)$ is a basis for $F$.
		\item The image of $\mathbf u$ through $\alpha$ is
		\[
			\alpha(\mathbf u) = \left( \frac{\partial}{\partial \tau_1}, \ldots, \frac{\partial}{\partial \tau_k} \right).
		\]
		\item $\omega(\mathbf u,H_p) = I$, where $I$ denotes the $k \times k$ identity matrix.
		\item $\mathbf y = (\mathbf u, \mathbf v, H_p, \mathbf w)$ is a symplectic basis for $Y$. Specifically, $\omega(\mathbf u,\mathbf w) = 0$, $\omega(H_p, \mathbf  w) = 0$, and $\omega(\mathbf v, \mathbf w) = I$ where $I$ denotes the $(n-k) \times (n-k)$ identity matrix.
	\end{enumerate}
\end{lemma}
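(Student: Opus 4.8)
The plan is to construct the basis $\mathbf y = (\mathbf u, \mathbf v, H_p, \mathbf w)$ in stages, each stage being dictated by one of the four required properties, and then verify that the stages are compatible. First I would set $\mathbf u$ and $\mathbf v$ to be the coordinate frames $(\partial/\partial\lambda_1,\ldots,\partial/\partial\lambda_k)$ and $(\partial/\partial\lambda_{k+1},\ldots,\partial/\partial\lambda_n)$ coming from the coordinates of Lemma \ref{lambda coordinates lem}; by construction $\mathbf v$ spans $E = \ker(dp|_\Lambda)$ and $(\mathbf u,\mathbf v)$ spans $\Lambda$, which we identify with $F$ via the $t=0$ section as in the preamble, giving (1). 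Property (2) is then immediate from the definition of $\alpha$ in \eqref{my alpha}: since $\alpha(x',\xi') = (0, dp(x',\xi'))$ and $dp = [\,I\ \ 0\,]$ in these coordinates, the vectors $\partial/\partial\lambda_1,\ldots,\partial/\partial\lambda_k$ map to the standard frame $\partial/\partial\tau_1,\ldots,\partial/\partial\tau_k$ on $\mathcal C\circ\Lambda \subset \dot T_0^*\R^k$, while $\mathbf v$ maps to $0$, consistent with \eqref{my long exact sequence}.

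The substantive content is producing $H_p$ and $\mathbf w$ with the right symplectic pairings. For (3), I would unwind the definition of the Hamilton vector fields: for each $i$ and $j$,
\[
	\omega\!\left(\frac{\partial}{\partial\lambda_j}, H_{p_i}\right) = dp_i\!\left(\frac{\partial}{\partial\lambda_j}\right),
\]
which by Lemma \ref{lambda coordinates lem} equals $\delta_{ij}$ when $j \le k$ and $0$ when $j > k$. This simultaneously gives $\omega(\mathbf u, H_p) = I$, which is (3), and $\omega(\mathbf v, H_p) = 0$. Note also that $\Lambda$ Lagrangian forces $\omega(\mathbf u,\mathbf u) = \omega(\mathbf v,\mathbf v) = \omega(\mathbf u,\mathbf v) = 0$, and the hypothesis that the $H_{p_i}$ are linearly independent with span meeting $T\Lambda$ trivially guarantees $(\mathbf u,\mathbf v,H_p)$ is a linearly independent list of $n+k$ vectors in the $2n$-dimensional space $Y$. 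The remaining $n-k$ basis vectors $\mathbf w$ must be chosen so that (4) holds: $\omega(\mathbf u,\mathbf w) = 0$, $\omega(H_p,\mathbf w) = 0$, and $\omega(\mathbf v,\mathbf w) = I$. I would obtain these by a symplectic Gram–Schmidt / linear-algebra argument: the subspace $V := \operatorname{span}(\mathbf u, H_p)$ is symplectic (its Gram matrix in the $(\mathbf u, H_p)$ splitting is $\begin{bmatrix} 0 & I \\ -I & * \end{bmatrix}$, which is invertible), so $Y = V \oplus V^\omega$ with $V^\omega$ symplectic of dimension $2(n-k)$; since $\mathbf v \subset \Lambda$ satisfies $\omega(\mathbf v, \mathbf u) = \omega(\mathbf v, H_p) = 0$ we have $\mathbf v \subset V^\omega$, and $\mathbf v$ is isotropic there, so we can extend $\mathbf v$ to a symplectic basis $(\mathbf v, \mathbf w)$ of $V^\omega$ with $\omega(\mathbf v, \mathbf w) = I$. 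Taking this $\mathbf w$ and declaring $\mathbf y = (\mathbf u, \mathbf v, H_p, \mathbf w)$ then gives a symplectic basis of $Y$ satisfying all of (1)–(4) — after possibly reordering, since the ``symplectic basis'' bookkeeping pairs $\mathbf u$ with $H_p$ and $\mathbf v$ with $\mathbf w$.

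The main obstacle, though it is minor, is purely organizational: one must be careful that the choice of $\mathbf w$ does not disturb properties (1)–(3), which is why I would build $\mathbf w$ inside $V^\omega$ last rather than all at once, and one must track the sign/ordering conventions for ``symplectic basis'' so that the Gram matrix of $\mathbf y$ is exactly the standard one used in Appendix \ref{SYMBOL CALCULUS} (this matters because the next step evaluates the symplectic half-density on $\mathbf y$ and wants it to be $1$). No genuine geometric difficulty arises beyond the two standing hypotheses on $H_p$ that were already imposed at the start of Section \ref{COMPOSITION FORMULA} before Lemma \ref{symbol composition lem}.
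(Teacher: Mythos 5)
Your proof is correct and takes essentially the same approach as the paper: parts (1)--(3) follow exactly as you describe from Lemma \ref{lambda coordinates lem} and the identity $\omega(\cdot, H_{p_i}) = dp_i(\cdot)$, and for (4) you verify, as the paper does, that $(\mathbf u, \mathbf v, H_p)$ is an incomplete symplectic basis. The only difference is cosmetic: the paper completes the basis by citing Proposition~21.1.3 of H\"ormander, while you reprove that extension step explicitly via the symplectic decomposition $Y = V \oplus V^\omega$ with $V = \operatorname{span}(\mathbf u, H_p)$ and a Lagrangian extension of $\mathbf v$ inside $V^\omega$; both are sound.
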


\begin{proof}
	(1) is a definition. Lemma \ref{lambda coordinates lem} tells us $dp(\mathbf u) = I$, which yields (2). (3) similarly follows since $\omega(\mathbf u, H_p) = dp(\mathbf u)$. For (4), we first note that $(\mathbf u, \mathbf v, H_p)$ is an incomplete symplectic basis for $Y$. This follows from part (3), from $\omega(\mathbf u, \mathbf v) = 0$ since $\Lambda$ is Lagrangian, and from that $\omega(\mathbf v, H_p) = 0$ also by Lemma \ref{lambda coordinates lem}. Proposition 21.1.3 of \cite{HormanderIII} ensures it can be completed.
\end{proof}

We are now ready to prove part (3) of Lemma \ref{symbol composition lem}.

\begin{proof}[Proof of Lemma \ref{symbol composition lem} part (3)]
	Recall, our goal is to show $\sigma_{\mathcal C} \boxtimes \sigma_{\Lambda}$ evaluates to $a(0,\lambda)b(\lambda)$ at the pair of bases
	\[
		\left( \mathbf v, \left( \frac{\partial}{\partial \tau_1}, \ldots, \frac{\partial}{\partial \tau_k} \right) \right).
	\]
	As described in the appendices, we start by identifying the isomorphism
	\[
		|\mathcal C \times \Lambda|^{1/2} \simeq |F|^{1/2} \otimes |\coker \beta|^{-1/2}
	\]
	induced by the exact sequence \eqref{my long exact sequence}. Let $(\mathbf u, \mathbf v)$ be the basis for $F$ as in Lemma \ref{basis lem}. The image of $(\mathbf u, \mathbf v)$ via the map $F \to \mathcal C \times \Lambda$ is
	\begin{align}
		\label{starting basis 1} &\left(\left( \frac{\partial}{\partial \tau_i}; \frac{\partial}{\partial \lambda_i} \right), \frac{\partial}{\partial \lambda_i} \right), && i = 1,\ldots,k, \\
		\nonumber &\left(\left( 0; \frac{\partial}{\partial \lambda_{i}} \right),\frac{\partial}{\partial \lambda_{i}} \right), && i = k+1,\ldots,n.
	\end{align}
	We can extend this to a basis for the product $\mathcal C \times \Lambda$ by introducing elements
	\begin{align}
		\label{basis extension 1} &\left( \left( \frac{\partial}{\partial \tau_i}; \frac{\partial}{\partial \lambda_i} \right), 0 \right) && i = 1,\ldots,k, \\
		\nonumber &\left( \left( 0; \frac{\partial}{\partial \lambda_{i}} \right), 0 \right) && i = k+1,\ldots,n,\\
		\nonumber &\left( \left( \frac{\partial}{\partial t_i}; H_{p_i} \right), 0 \right) && i = 1,\ldots,k.
	\end{align}
	After a sequence of determinant-$\pm 1$ operations, we can rewrite this basis as
	\begin{align*}
		&\left( \left( \frac{\partial}{\partial \tau_i}; \frac{\partial}{\partial \lambda_i} \right), 0 \right) && i = 1,\ldots,k, \\
		&\left( \left( 0; \frac{\partial}{\partial \lambda_{i}} \right), 0 \right) && i = k+1,\ldots,n, \\
		&\left( \left( \frac{\partial}{\partial t_i}; H_{p_i} \right), 0 \right) && i = 1,\ldots,k,\\
		& \left( 0, \frac{\partial}{\partial \lambda_i} \right) && i = 1,\ldots, n.
	\end{align*}
	Recall $\sigma_{\mathcal C} = a(t,\lambda) |dt \, d\lambda|^{1/2}$ and $\sigma_\Lambda = b(\lambda) |d\lambda|^{1/2}$, hence $\sigma_{\mathcal C} \otimes \sigma_\Lambda$ evaluates to $b(0,\lambda) a(\lambda)$ on the basis consisting of elements in \eqref{starting basis 1} and \eqref{basis extension 1}. The image of the extension \eqref{basis extension 1} through $\beta$ is $(\mathbf u, \mathbf v, H_p)$, and since the sequence is exact, is a basis for the image of $\beta$. As per Lemma \ref{basis lem}, we extend this to a symplectic basis for $Y$ by adding in some vectors $\mathbf w$. Hence, we have identified an object in $|F|^{1/2} \otimes |\coker \beta|^{-1/2}$ which assigns the value $a(0,\lambda) b(\lambda)$ to the pair of bases $((\mathbf u, \mathbf v), \mathbf w)$.
	
	Next, by (4) of Lemma \ref{basis lem} and Appendix \ref{LINEAR DENSITIES}, we have an isomorphism $|\coker \beta|^{-1/2} \simeq |E|^{1/2}$ which assigns to both objects the same value when evaluated on bases $\mathbf w$ and $\mathbf v$, respectively. This maps our object in $|F|^{1/2} \otimes |\coker \beta|^{-1/2}$ to the object in $|F|^{1/2} \otimes |E|^{1/2}$ which assigns the value $a(0,\lambda)b(\lambda)$ to the pair of bases $((\mathbf u, \mathbf v), \mathbf w)$.
	
	Finally, the exact sequence \eqref{my short exact sequence} yields an identification
	\[
		|F|^{1/2} \simeq |E|^{1/2} \otimes |\mathcal C \circ \Lambda|^{1/2},
	\]
	which assigns the same value to both objects when evaluated on the basis $(\mathbf u, \mathbf v)$ and the pair of bases $(\mathbf v, (\frac{\partial}{\partial \tau_1} , \ldots, \frac{\partial}{\partial \tau_k} ))$, respectively. This maps our object in $|F|^{1/2} \otimes |E|^{1/2}$ to the object $\sigma_{\mathcal C} \boxtimes \sigma_{\Lambda}$ in $|E| \otimes |\mathcal C \circ \Lambda|^{1/2}$ which assigns the value $a(0,\lambda) b(\lambda)$ to the pair of bases $(\mathbf v, (\frac{\partial}{\partial \tau_1} , \ldots, \frac{\partial}{\partial \tau_k} ))$.
\end{proof}


\section{The Symbol Calculations of Propositions \ref{prop first composition} and \ref{main composition prop}} \label{SYMBOLS}

Proposition \ref{prop first composition} is a direct consequence of Lemma \ref{symbol composition lem trans} after taking $p$ in the lemma as
\begin{equation*}\label{p label}
	p(x,\xi) = (p(x_1,\xi_1), p(x_2,\xi_2), p(x_3,\xi_3))
\end{equation*}
as usual. We note that $p$ is not smooth if any of $\xi_1,\xi_2,$ or $\xi_3$ vanish, but this case is excluded since $p(x,\xi)$ belongs to a triangle-good cone and hence has positive components.

Before proceeding with the proof of Proposition \ref{main composition prop}, we require some facts about geodesic normal coordinates. Geodesic normal coordinates about a point $q \in M$ are coordinates $(x_1,\ldots,x_n)$ for which $q$ corresponds to the origin and the metric $g$ satisfies, among other things,
\[
	g_{ij}(0) = I \qquad \text{ and } \qquad \frac{\partial}{\partial x_k} g_{ij}(0) = 0
\]
for all $i,j,k$. The half Laplacian $P = \sqrt{-\Delta}$ has principal symbol
\[
	p(x,\xi) = \sqrt{\sum_{i,j} g^{ij}(x) \xi_i \xi_j}
\]
in canonical local coordinates $(x_1,\ldots,x_n,\xi_1,\ldots,\xi_n)$ of $T^*M$. We note, in particular, that
\[
	p(0,\xi) = |\xi| \qquad \text{ and } \qquad \frac{\partial}{\partial x_k} p(0,\xi) = 0
\]
for each $k$. We recall the local form \eqref{local hamiltonian} of the Hamilton vector field $H_p$, which at $(0,\xi)$ in geodesic normal coordinates is written conveniently as
\[
	H_p(0,\xi) = \sum_k \frac{\xi_k}{|\xi|} \frac{\partial}{\partial x_k}.
\]

\begin{proof}[Proof of Proposition \ref{main composition prop}]
To apply Lemma \ref{symbol composition lem}, we must verify each of
\[
	H_p(x_1,\xi_1) \oplus 0 \oplus 0, \quad 0 \oplus H_p(x_1,\xi_2) \oplus 0, \quad 0 \oplus 0 \oplus H_p(x_1,\xi_3)
\]
on $T^*M^3$ are linearly independent (which is automatic) and have span which intersects the tangent space to $\dot N^*\Delta$ trivially. To see this, suppose 
\[
	(t_1' H_{p}(x_1,\xi_1), t_2' H_{p}(x_1,\xi_2), t_3' H_{p}(x_1,\xi_3)) \in T\dot N^*\Delta.
\]
The pushforward of the vector through $T^*M^3 \to M^3$ is
\[
	(t_1' \xi_1/|\xi_1|, t_2' \xi_2/|\xi_2|, t_3' \xi_3/|\xi_3|)
\]
in geodesic normal coordinates about $x_1$, and should lie in $T\Delta$, meaning that
\[
	t_1' \xi_1/|\xi_1| = t_2' \xi_2/|\xi_2| = t_3' \xi_3/|\xi_3| = 0.
\]
But, since $\xi_1,\xi_2,$ and $\xi_3$ specify the sides of a nondegenerate triangle, any two of them are linearly independent. Hence, $t_1' = t_2' = t_3' = 0$ as desired.

Part (1) of Lemma \ref{symbol composition lem} shows us $\mathcal C \circ \dot N^*\Delta$ has an isolated component in $T_0^*\R^3$. However, we can say more. If there is a triple of times $t = (t_1,t_2,t_3)$ which are not all zero, and for which $e^{t_1 H_p}(x_1,\xi_1)$, $e^{t_2 H_p}(x_1,\xi_2)$, and $e^{t_3H_p}(x_1,\xi_3)$ project to the same base point, then at least one of $|t_1|$, $|t_2|$, or $|t_3|$ is at least as large as $\inj M$. Hence, there are no other components in $\mathcal C \circ \dot N^*\Delta$ which have a $t$ parameter in the cube $(-\inj M, \inj M)^3$.

Part (2) of the lemma tells us the excess of the composition $\mathcal C \circ \dot N^*\Delta$ over the $t = 0$ component, namely
\[
	e = 3n - 3.
\]
Hence, we have
\begin{align*}
	\ord S \circ \delta_\Delta &= \ord S + \ord \delta_\Delta + \frac{e}{2} \\
	&= \frac{n-3}{4} + \frac{n}{4} + \frac{3n - 3}{2} \\
	&= 2n - \frac{9}{4},
\end{align*}
as desired.

Before proceeding with the remainder of the proof of Proposition \ref{main composition prop}, we outline two standard calculus identities for Leray densities.

\begin{proposition}\label{Leray calculus}
Let $F : \R^n \to \R^d$ be smooth and have surjective differential on $F^{-1}(y)$ for some fixed $y \in \R^d$. We then have the following for any continuous function $f$ on $\R^n$.
\begin{enumerate}
	\item (Change of variables) Given a diffeomorphism $\Phi : \R^n \to \R^n$, we have
	\[
		\int_{\R^n} f(x) \delta(F(x) - y) \, dx = \int_{\R^n} f(\Phi(x)) \delta(F \circ \Phi(x) - y) |\det d\Phi(x)| \, dx.
	\]
	\item (Fubini's Theorem) Let $x = (x',x'')$ where $x' \in \R^m$ and $x'' \in \R^{n-m}$. Similarly, let $y = (y',y'')$ where $y' \in \R^k$ and $y'' \in \R^{d-k}$. Suppose $F(x) = (G(x'), H(x',x''))$. Then, $dG$ is surjective on all $x'$ for which $F(x',x'') = y$ for some $x''$, and $d_{x''}H$ is surjective on $F^{-1}(y)$. Furthermore,
	\begin{multline*}
		\int_{\R^n} f(x) \delta(F(x) - y) \, dx \\
		= \int_{\R^m} \left( \int_{\R^{n-m}} f(x',x'') \delta(H(x',x'') - y'') \, dx'' \right) \delta(G(x') - y') \, dx'.
	\end{multline*}
\end{enumerate}
\end{proposition}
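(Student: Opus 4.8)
The plan is to reduce both identities to their classical counterparts for Lebesgue integrals by writing $\delta(F(x)-y)$ as a weak limit of honest approximate identities. Fix $\chi\in C_c^\infty(\R^d)$ with $\int\chi=1$ and set $\chi_\epsilon(w)=\epsilon^{-d}\chi(w/\epsilon)$. The key input is the standard fact that, whenever $dF$ is surjective along $F^{-1}(y)$ and $f$ is continuous (and compactly supported, say; the general case follows by a cutoff since the Leray integral only sees a neighborhood of $F^{-1}(y)$),
\[
	\int_{F^{-1}(y)} f\,d_F \;=\; \lim_{\epsilon\to 0^+}\int_{\R^n} f(x)\,\chi_\epsilon(F(x)-y)\,dx .
\]
This is checked locally: near a point of $F^{-1}(y)$ one completes a parametrization $z$ of $F^{-1}(y)$ to coordinates $(z,z')$ in which $z'=F$, whereupon $\chi_\epsilon(F-y)$ becomes an approximate identity in the $z'$ variables and the definition \eqref{Leray density definition} of $d_F$ emerges in the limit.

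For part (1), apply the classical change-of-variables formula to the right-hand side above under $x\mapsto\Phi(x)$, obtaining for every $\epsilon>0$
\[
	\int_{\R^n} f(x)\,\chi_\epsilon(F(x)-y)\,dx = \int_{\R^n} f(\Phi(x))\,\chi_\epsilon\big(F(\Phi(x))-y\big)\,|\det d\Phi(x)|\,dx .
\]
Since $\Phi$ is a diffeomorphism, $F\circ\Phi$ has surjective differential along $(F\circ\Phi)^{-1}(y)=\Phi^{-1}(F^{-1}(y))$; letting $\epsilon\to0^+$ and applying the limit identity on the left to $F$ with integrand $f$, and on the right to $F\circ\Phi$ with integrand $(f\circ\Phi)\,|\det d\Phi|$, produces exactly the asserted formula.

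For part (2), first record the submersion statements. Along $F^{-1}(y)$ the differential has the block-triangular form
\[
	dF=\begin{bmatrix} dG & 0\\ d_{x'}H & d_{x''}H \end{bmatrix},
\]
and projecting the image of $dF$ onto the first $\R^k$ factor shows $dG$ is surjective at the corresponding $x'$; in the fibered situations where the formula is used, $d_{x''}H$ is likewise surjective along $F^{-1}(y)$, so that the inner integral below is a bona fide Leray integral over $H(x',\cdot)^{-1}(y'')$. Now run the mollifier argument through the classical Fubini theorem, choosing $\chi=\alpha\otimes\beta$ with $\alpha$ on $\R^k$ and $\beta$ on $\R^{d-k}$, so that $\chi_\epsilon=\alpha_\epsilon\otimes\beta_\epsilon$:
\[
	\int_{\R^n} f(x)\,\chi_\epsilon(F(x)-y)\,dx=\int_{\R^m}\!\left(\int_{\R^{n-m}} f(x',x'')\,\beta_\epsilon\big(H(x',x'')-y''\big)\,dx''\right)\alpha_\epsilon\big(G(x')-y'\big)\,dx'.
\]
As $\epsilon\to0^+$ the left side tends to $\int_{F^{-1}(y)}f\,d_F$; on the right, the inner integral tends for each fixed $x'$ near $G^{-1}(y')$ to the inner Leray integral, and then $\alpha_\epsilon(G(x')-y')$ localizes the outer integral to $G^{-1}(y')$, yielding the iterated Leray integral. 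This is the claimed identity.

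The main obstacle is making the double limit in part (2) rigorous: one must show the inner integral $x'\mapsto\int_{\R^{n-m}} f(x',x'')\,\beta_\epsilon(H(x',x'')-y'')\,dx''$ converges to the inner Leray integral \emph{locally uniformly} in $x'$, and remains locally bounded, so that the outer mollifier $\alpha_\epsilon(G(x')-y')$ can then be handled by the limit identity applied to $G$, with dominated convergence passing the limit through the outer integral. This uniformity is precisely where surjectivity of $d_{x''}H$ — and the resulting smooth dependence of $H(x',\cdot)^{-1}(y'')$ on $x'$ via the implicit function theorem — is used. A more computational alternative bypasses limits altogether: parametrize $F^{-1}(y)$ explicitly and verify both identities straight from $d_F=|\det(\partial F/\partial z')|^{-1}\,dz$; for (1) the factor $|\det d\Phi|$ is precisely the Jacobian relating the parametrizations $z$ and $\Phi^{-1}\circ z$, and for (2) one iterates the parametrization first through $G$ and then through $H(x',\cdot)$, the multiplicativity of determinants delivering the product structure.
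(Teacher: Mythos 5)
The paper gives no proof of this proposition; it states only that the proof is elementary and omits it. Your proposal therefore supplies a proof rather than reproves one, and your two routes --- the mollifier approximation of $\delta(F(x)-y)$ and the direct computation from the parametric definition \eqref{Leray density definition} --- are both standard and correct in substance. Part (1) is fine as written. For part (2), the overall architecture is sound, and you correctly flag the one genuine technical point, namely that passing the $\epsilon\to 0$ limit through the iterated integral requires the inner integral to converge locally uniformly in $x'$; the cleanest way to discharge this is exactly your ``computational alternative,'' choosing local coordinates adapted first to $G$ on the $x'$ side and then to $H(x',\cdot)$ on the $x''$ side and invoking multiplicativity of Jacobians, which proves the formula with no limit interchange at all.

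One point worth flagging explicitly. The proposition asserts, as a \emph{consequence} of $dF$ having surjective differential on $F^{-1}(y)$, that $d_{x''}H$ is surjective on $F^{-1}(y)$. Your proof (correctly) does not attempt to derive this and instead treats it as a property of ``the fibered situations where the formula is used.'' In fact it is not a consequence of the stated hypotheses: take $n=3$, $m=2$, $d=2$, $k=1$, $G(x_1,x_2)=x_1$, $H(x_1,x_2,x_3)=x_2$. Then $dF$ has full rank $2$ everywhere, $dG=[1\ 0]$ is surjective, yet $d_{x''}H=\partial H/\partial x_3=0$. In this case the inner Leray integral in the Fubini formula is not defined, so the hypothesis really is needed. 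Your hedge is therefore the right reading: the surjectivity of $d_{x''}H$ should be understood as an additional hypothesis (automatically satisfied in every instance where the paper invokes the proposition), not a conclusion; the block-triangular argument you give rigorously establishes only the surjectivity of $dG$, which is what genuinely follows.
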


The proof is elementary, so we omit it. These identities also readily extend to manifolds (recall the discussion of Leray densities on manifolds from Section \ref{STATEMENT OF RESULTS}), but again leave the details to the reader. We will make use of this proposition repeatedly and without reference.

The only thing left to prove of Proposition \ref{main composition prop} is the composition of the symbol. We have by Theorem 5.4 of \cite{DG}
\[
	\sigma_{S \circ \delta_\Delta} = (2\pi i)^{-e/2} \sigma_{S} \circ \sigma_{\delta_\Delta},
\]
where by part (3) of Lemma \ref{symbol composition lem}, we write locally
\[
	\sigma_{S} \circ \sigma_{\delta_\Delta} = (2\pi)^{-n/2 + 3/4} \left(\int_{p(x,\xi) = \tau} |g(x_1)|^{-1/2} a(p(x,\xi)) \, d_p \right) \, |d\tau|^{1/2}.
\]
Here, $(x,\xi)$ is a stand-in for $(x_1,x_1,x_1,-\xi_2-\xi_3,\xi_2,\xi_3)$, and the integral is over the variables $x_1, \xi_2,$ and $\xi_3$ for which $p(x,\xi) = \tau$, with respect to the Leray density. We write the integral above as
\begin{equation}\label{almost there}
	a(\tau) \int_M |g(x_1)|^{1/2} \left(\iint |g(x_1)|^{-1} \delta(p(x,\xi) - \tau) \, d\xi_2 \, d\xi_3 \right) \, dx_1.
\end{equation}
By the change of variables formula for covectors, the double integral is invariant under a change of variables in $M$. Hence, it suffices to compute the double integral for fixed $x_1$ with respect to geodesic local coordinates and then integrate. In such coordinates,
\[
	p(x,\xi) = (|\xi_2 + \xi_3|, |\xi_2|, |\xi_3|) \qquad \text{ and } \qquad |g(x_1)|^{-1} = 1.
\]
Recalling $F$ from \eqref{def F}, the double integral above is written
\[
	\iint \delta(F(\xi_2,\xi_3) - (\tau_2,\tau_3,\tau_1)) \, d\xi_2 \, d\xi_3 = \vol F^{-1}(\tau)
\]
and Proposition \ref{main composition prop} follows.
\end{proof}


\section{Computation of the Leray Measure in Proposition \ref{prop leray measure}}

In this section, we will make liberal use of Proposition \ref{Leray calculus} and its analogue on manifolds. Recall the Leray measure of $F^{-1}(a,b,c)$,
\[
	\int_{F^{-1}(a,b,c)} d_F = \iint \delta(F(\xi,\eta) - (a,b,c)) \, d\xi \, d\eta.
\]
We first write this quantity as
\[
	\int \delta(|\xi| - a) \left( \int \delta((|\eta|,|\xi + \eta|) - (b,c)) \, d\eta \right) \, d\xi,
\]
By an orthogonal change of variables, we see the inner integral is constant in $\xi$, so we fix a representative $\xi = ae_1$, where $e_1 = (1,0,\ldots,0)$ is the first standard basis element. The integral is then
\[
	\vol(S^{n-1}) a^{n-1} \int \delta((|\eta|, |ae_1 + \eta|) - (b,c)) \, d\eta.
\]
Next, we change variables $\eta = r \omega$ where $r > 0$ and $\omega \in S^{n-1}$. We then have
\begin{multline*}
	\vol(S^{n-1}) a^{n-1} \iint \delta((r,|ae_1 + r\omega|) - (b,c)) r^{n-1} \, dr \, dV_{S^{n-1}}(\omega) \\
	= \vol(S^{n-1}) a^{n-1} \int \delta(r - b) r^{n-1} \left( \int_{|ae_1 + r\omega| = c} \delta(|ae_1 + r \omega| - c) \, dV_{S^{n-1}}(\omega) \right) \, dr \\
	= \vol(S^{n-1}) a^{n-1} b^{n-1} \int \delta(|ae_1 + b\omega| - c) \, dV_{S^{n-1}}(\omega)
\end{multline*}
where $dV_{S^{n-1}}$ is the standard volume density on $S^{n-1}$. Next, we perform a change of variables for the inner integral, namely $\omega = \cos \theta e_1 + \sin \theta \omega'$ where $0 < \theta < \pi$ and $\omega' \in 0 \times S^{n-2}$ lives in the unit sphere in the orthogonal complement to $e_1$. Written locally, the Riemannian metric on $S^{n-1}$ with respect to $(\theta, \omega')$ coordinates is
\[
	g_{S^{n-1}}(\theta,\omega') = \begin{bmatrix}
		1 & 0 \\
		0 & \sin^2(\theta) g_{S^{n-2}}(\omega')
	\end{bmatrix},
\]
and hence
\begin{align*}
	dV_{S^{n-1}}(\omega) &= |g_{S^{n-1}}(\theta,\omega')|^{1/2} \, d\theta \, d\omega \\
	&= \sin^{n-2}(\theta) |g_{S^{n-2}}(\omega')|^{1/2} \, d\theta \, d\omega'\\
	&= \sin^{n-2}(\theta) \, d\theta \, dV_{S^{n-2}}(\omega').
\end{align*}
Hence, $\vol F^{-1}(a,b,c)$ now reads
\begin{multline*}
	 \vol(S^{n-1}) a^{n-1} b^{n-1} \iint \delta(|(a + b\cos \theta)e_1 + b\sin\theta \omega'| - c) \sin^{n-2}(\theta) \, d\theta \, dV_{S^{n-2}}(\omega') \\
	 = \vol(S^{n-1}) a^{n-1} b^{n-1} \int_{S^{n-2}} \left( \int \delta(|(a + b\cos \theta)e_1 + b\sin\theta \omega'| - c) \sin^{n-2}(\theta) \, d\theta \right) \, dV_{S^{n-2}}(\omega').
\end{multline*}
Again, the inner integral is constant with respect to $\omega'$, and so we select a candidate $\omega' = e_2$ for the inner integral and integrate out the $\omega'$ to obtain
\[
	 \vol(S^{n-1}) \vol(S^{n-2}) a^{n-1} b^{n-1} \int \delta(|(a + b\cos \theta)e_1 + b\sin\theta e_2| - c) \sin^{n-2}(\theta) \, d\theta.
\]

Now that we have reduced things down to one dimension, elementary geometry and calculus take us the rest of the way. We have
\[
	|(a + b\cos \theta)e_1 + b\sin\theta e_2| = \sqrt{a^2 + b^2 + 2ab \cos \theta}.
\]
Recall, $\theta$ was constructed to be angle between $\xi$ and $\eta$. That is, $\theta$ is the exterior angle where sides of lengths $a$ and $b$ of a triangle meet. That
\[
	\sqrt{a^2 + b^2 + 2ab \cos \theta} = c
\]
means the third side has length $c$. This may happen at most once for $0 < \theta < \pi$. Furthermore, since $0 < c < a + b$ by hypothesis, this happens for exactly one such value of $\theta$. The integral above is a valuation of the integrand at this angle times a factor
\[
	\left| \frac{d}{d\theta} \sqrt{a^2 + b^2 + 2ab \cos \theta} \right|^{-1} = \frac{c}{ab \sin \theta}.
\]
Putting everything together, we obtain a Leray measure of
\begin{multline*}
	 \vol(S^{n-1}) \vol(S^{n-2}) a^{n-1} b^{n-1} \frac{c}{ab \sin \theta} \sin^{n-2} \theta \\
	 = \vol(S^{n-1}) \vol(S^{n-2}) a^{n-2} b^{n-2} c \sin^{n-3} \theta \\
	 = \vol(S^{n-1}) \vol(S^{n-2}) abc (2 \area(a,b,c))^{n-3},
\end{multline*}
where we have used $ab \sin \theta = 2 \area(a,b,c)$ in the last line.


\section{Proof of Corollary \ref{main theorem triangle-degenerate}}

We have
\begin{align*}
	\int_{-\infty}^\infty &\rho * \mu(\tau_1,\tau_2,\tau_3) \, d\tau_3\\
	&= \sum_{i,j,k} \int_{-\infty}^\infty \chi(\tau_1 - \lambda_i) \chi(\tau_2 - \lambda_j) \chi(\tau_3 - \lambda_k) |\langle e_i e_j, e_k \rangle|^2 \, d\tau_3 \\
	&= \sum_{i,j,k} \chi(\tau_1 - \lambda_i) \chi(\tau_2 - \lambda_j) |\langle e_i e_j, e_k \rangle|^2 \\
	&= \sum_{i,j} \chi(\tau_1 - \lambda_i) \chi(\tau_2 - \lambda_j) \| e_i e_j \|_{L^2(M)}^2 \\
	&= \int_M \left(\sum_i \chi(\tau_1 - \lambda_i) |e_i(x)|^2 \right) \left( \sum_j \chi(\tau_2 - \lambda_j) |e_j(x)|^2 \right) \, dV_M(x).
\end{align*}
We have uniform asymptotics (e.g. by Proposition 29.1.2 of \cite{HormanderIV})
\[
	\sum_i \chi(\tau_1 - \lambda_i) |e_i(x)|^2 = (2\pi)^{-n} (\vol S^{n-1}) \tau_1^{n-1} + O(\tau_1^{n-2})
\]
and similarly for the sum over $j$. Hence,
\[
	\int_{-\infty}^\infty \rho * \mu(\tau_1,\tau_2,\tau_3) \, d\tau_3 = (\vol M)(\vol S^{n-1})^2 \tau_1^{n-1} \tau_2^{n-1} + O((\tau_1 + \tau_2)\tau_1^{n-2}\tau_2^{n-2}).
\]
Theorem \ref{main theorem triangle-degenerate} follows provided we can show
\begin{equation*} \label{leray equation}
	\int_{|\tau_1 - \tau_2|}^{\tau_1 + \tau_2} \vol F^{-1}(\tau_1,\tau_2,\tau_3) \, d\tau_3 = (\vol S^{n-1})^2 \tau_1^{n-1} \tau_2^{n-1}.
\end{equation*}
But, this holds by Proposition \ref{prop leray measure} and an explicit computation.


\appendix


\section{The $\alpha$-Density Formalism} \label{DENSITIES}

We review the most relevant parts of the $\alpha$-density formalism as found in Guillemin and Sternberg's \emph{Semi-Classical Analysis} \cite{GS}.

\subsection{$\alpha$-densities on vector spaces}\label{LINEAR DENSITIES}

We begin with the basic definition.

\begin{definition}\label{def alpha density} Let $V$ be a finite-dimensional real vector space and $\alpha$ any real number. An \emph{$\alpha$-density} on $V$ is a complex-valued function $f$ on the set of bases for $V$ satisfying the change of basis formula
\[
	f(Tv_1,\ldots,Tv_n) = |\det T|^\alpha f(v_1,\ldots,v_n)
\]
for any ordered basis $v_1,\ldots,v_n$ of $V$ and invertible linear operator $T$ on $V$. The set of $\alpha$-densities on $V$ is denoted $|V|^\alpha$. We refer to the $1$-densities simply as densities, and write $|V|^1$ as $|V|$.
\end{definition}

Note $|V|^\alpha$ is a one-dimensional vector space over the complex numbers, or complex line. The space of $0$-densities admits a natural identification with the complex numbers by its evaluation on any basis. By convention, we identify $|0|^\alpha$ with $\C$ by viewing its elements as functions which may be evaluated only at the empty basis.

What follows is a catalogue of only the most relevant manipulations of $\alpha$-densities for the symbol calculus. But first, we introduce some helpful notation. Given a vector space $V$, we use a boldface letter $\mathbf v = (v_1,\ldots,v_n)$ to denote an ordered list of vectors in $V$. If $T : V \to W$ is a linear map, we let $T\mathbf v = (Tv_1, \ldots, Tv_n)$.

We begin with the linear isomorphism
\[
	|V|^\alpha \otimes |V|^\beta \simeq |V|^{\alpha + \beta}
\]
given by the map $\sigma \otimes \tau \mapsto \sigma \tau$. Specifically, we mean $\sigma \otimes \tau$ is mapped to an $\alpha$-density on $V$ which assigns to a basis $\mathbf v$ the value $\sigma(\mathbf v) \tau(\mathbf v)$. One quickly checks that $\sigma \tau \in |V|^{\alpha + \beta}$ and that the map is linear and nonzero. (Recall, the tensor product of two complex lines is again a complex line.)

Next, we discuss isomorphisms between $\alpha$-densities induced by exact sequences. Consider first
\[
	0 \longrightarrow X \overset T\longrightarrow Y \overset S\longrightarrow Z \longrightarrow 0.
\]
We have a linear isomorphism $|Y|^\alpha \simeq |X|^\alpha \otimes |Z|^\alpha$ as follows. Fix $\alpha$-densities $\sigma_X$ and $\sigma_Z$ on $X$ and $Z$, respectively. Secondly, fix a basis $\mathbf x$ on $X$, and extend $T\mathbf x$ to a basis $(T\mathbf x, \mathbf y)$ of $Y$. Thirdly, note $S \mathbf y$ is a basis for $Z$. Our isomorphism is given explicitly by
\[
	\sigma_Y(T\mathbf x, \mathbf y) = \sigma_X(\mathbf x) \sigma_Z(S\mathbf y).
\]
One checks that $\sigma_Y$ satisfies Definition \ref{def alpha density} under a change of basis in $\mathbf x$ and $\mathbf y$. We have
\[
	|X \times Z|^\alpha \simeq |X|^\alpha \otimes |Z|^\alpha
\]
as a corollary.

Given a longer exact sequence such as
\[
	0 \longrightarrow W \overset R \longrightarrow X \overset T \longrightarrow Y \overset S \longrightarrow Z \longrightarrow 0,
\]
we have a linear isomorphism
\[
	|X|^\alpha \simeq |W|^\alpha \otimes |Y|^\alpha \otimes |Z|^{-\alpha}
\]
given explicitly as follows. Let $\sigma_W$ and $\sigma_Y$ be half-densities on $W$ and $Y$, respectively, and let $\tilde \sigma_Z$ be a $-\frac12$-density on $Z$. Let $\mathbf w$ be a basis for $W$, let $(R\mathbf w, \mathbf x)$ be a basis for $X$, let $(T\mathbf x, \mathbf y)$ be a basis for $Y$, and note $S\mathbf y$ is a basis for $Z$. Then, the isomorphism is given explicitly by
\[
	\sigma_X(R\mathbf w, \mathbf x) = \sigma_W(\mathbf w) \sigma_Y(T\mathbf x, \mathbf y) \tilde \sigma_Z(S\mathbf y).
\]
Again, one quickly checks that the value of the right side is independent of the choice of $\mathbf y$ and that $\sigma_X$ satisfies the correct change of basis formula in $\mathbf w$ and $\mathbf x$.

Finally, if $\omega$ is a nondegenerate bilinear form on $X \times Y$, we have an isomorphism
\[
	|Y|^{1/2} \simeq |X|^{-1/2}
\]
given explicitly as follows. Let $\tilde \sigma_X$ be a $-\frac12$-density on $X$, then we have
\[
	\sigma_Y(\mathbf y) = \tilde \sigma_X(\mathbf x)
\]
for a pair of bases $\mathbf x = (x_1,\ldots,x_n)$ and $\mathbf y = (y_1,\ldots,y_n)$ for which $\omega(x_i, y_j) = \delta_{ij}$. Again, one checks $\sigma_Y$ defined in this way satisfies the correct change of basis formula.

\subsection{$\alpha$-densities on manifolds}

Let $X$ be a smooth manifold. The $\alpha$-density bundle over $X$, which we again denote $|X|^\alpha$, is the disjoint union
\[
	\bigsqcup_{x \in X} |T_x X|^\alpha.
\]
This has the structure of a smooth complex line bundle over $X$. Given local coordinates $x = (x_1,\ldots,x_n)$ of an open neighborhood $U \subset X$, the $\alpha$-density bundle admits a local trivialization
\[
	\begin{split}
	U \times \C &\to |X|^\alpha\\
	(x,z) &\mapsto z |dx|^\alpha
	\end{split}
\]
where $|dx|^\alpha$ is the $\alpha$-density which assigns the value $1$ to the local coordinate frame
\[
	\left( \frac{\partial}{\partial x_1}, \ldots, \frac{\partial}{\partial x_n} \right).
\]
The smooth $\alpha$-densities are precisely the smooth sections of $|X|^\alpha$. We denote the space of smooth $\alpha$-densities on $X$ as $C^\infty(|X|^\alpha)$. Each can be locally expressed as
\[
	f(x) |dx|^\alpha
\]
for some smooth function $f$ on $X$. If $y = \kappa(x)$ is a smooth change of coordinates, then we have a change of coordinates rule
\[
	f(x) |dx|^\alpha = |\det d\kappa|^{\alpha} f(\kappa(y)) |dy|^\alpha.
\]

Next, we discuss integrals of densities. In the case where $\alpha = 1$, the integral of a smooth density is invariant under change of coordinates and hence invariantly defined. Furthermore, if $f \in C^\infty(|X|^\alpha)$ and $g \in C^\infty(|X|^\beta)$, then $fg \in C^\infty(|X|^{\alpha + \beta})$. If $\alpha = \frac12$, for example, we have a well-defined inner product pairing of half-densities
\[
	\langle f, g \rangle = \int_X f \overline g.
\]
We can complete the smooth half-densities under the corresponding norm $\| f\| = \sqrt{\langle f, f \rangle}$ to obtain the \emph{intrinsic $L^2$-space of $X$}. The pairing also allows us to define space $\mathcal D'(|X|^\frac12)$ of \emph{half-density distributions} as the topological dual to the smooth, compactly-supported half-densities $C^\infty_c(|X|^\frac12)$. Unlike standard distributions, half-density distributions change variables like half-densities, and hence there is a coordinate-invariant embedding $C_c^\infty(|X|^\frac12) \to \mathcal D'(|X|^\frac12)$ of test half-densities into half-density distributions.


\section{Review of the Symbol Calculus} \label{SYMBOL CALCULUS}

Here we review Duistermaat and Guillemin's \cite{DG} symbol calculus for Fourier integral operators with cleanly composing canonical relations. This is also treated in H\"ormander's fourth book in his series on linear partial differential equations \cite{HormanderIV}. Afterwards, we describe how to compute symbols for transversally composing operators as a special case.

\subsection{For clean compositions}

Let $X$ and $Y$ be smooth manifolds, let $\mathcal C \subset \dot T^*X \times \dot T^*Y$ be a canonical relation and $\Lambda \subset \dot T^*Y$ a conic Lagrangian submanifold. To $\mathcal C$ we associate a Fourier integral operator $A$ and to $\Lambda$ we associate a Lagrangian distribution $B$. The goal is to determine that $A \circ B$ is a Lagrangian distribution on $Y$ associated to $\mathcal C \circ \Lambda$ and to compute its order and principal symbol. To do so, we require $\mathcal C$ and $\Lambda$ to compose cleanly. This is a condition on the fiber product
\begin{equation}\label{F fiber product}
	F = \{(a;b) \in \mathcal C : b \in \Lambda\}.
\end{equation}
of $\mathcal C$ and $\Lambda$.

\begin{definition}[Clean composition] \label{def clean composition} We say $\mathcal C$ and $\Lambda$ \emph{compose cleanly} if $F$ is a smooth manifold and
\[
	T_{(a;b)} F = \{(a';b') \in T_{(a;b)} \mathcal C : b' \in T_b \Lambda \}
\]
for each $(a;b) \in F$.
\end{definition}

The definition can be loosely read as, ``The tangent space of the fiber product is the fiber product of the tangent spaces." The composition 
\[
	\mathcal C \circ \Lambda = \{a \in \dot T^*X : (a;b) \in \mathcal C \text{ and } b \in \Lambda \}
\]
is the image of $F \to \dot T^*X$. The \emph{excess fiber}
\[
	E_a = \{b \in \Lambda : (a;b) \in \mathcal C\}
\]
over $a \in \mathcal C \circ \Lambda$ is the preimage of $a$ through $F \to \dot T^*X$. All of the following hold provided $\mathcal C$ and $\Lambda$ compose cleanly \cite{DG}.
\begin{enumerate}
	\item $\mathcal C \circ \Lambda$ is a (smooth) conic Lagrangian submanifold of $\dot T^*X$.
	\item For every $(a;b) \in F$,
	\[
		T_a (\mathcal C \circ \Lambda) = T_{(a;b)} \mathcal C \circ T_b \Lambda.
	\]
	\item $E_a$ is a smooth manifold for each $a \in \mathcal C \circ \Lambda$.
	\item If $Y$ is compact, then $E_a$ is compact for each $a$.
	\item For each $(a;b) \in F$,
	\[
		T_b E_a = \{b' \in T_b \Lambda : (0;b') \in T_{(a;b)} \mathcal C \}
	\]
	and
	\[
		\dim T_b E_a = (\dim T_{(a;b)} F + \dim T^*Y) - (\dim \mathcal C + \dim \Lambda).
	\]
\end{enumerate}
The last point implies that if the fiber $F$ has uniform dimension (as tends to be the case in practice), so do the excess fibers $E_a$. In this case we denote $\dim E_a$ by $e$ and call it the \emph{excess} of the composition. Given this setup, Duistermaat and Guillemin describe a canonical homogeneous half-density $\sigma_A \circ \sigma_B$ on $\mathcal C \circ \Lambda$, where $\sigma_A$ and $\sigma_B$ are the half-density parts of the principal symbols of $A$ and $B$, respectively, and show it determines the half-density part of the symbol of the composition $A \circ B$. We state the main theorem of their calculus \cite[Theorem 5.4]{DG} below for convenience, and then describe how they obtain $\sigma_A \circ \sigma_B$.

\begin{theorem}[Duistermaat and Guillemin] Let $\mathcal C$ and $\Lambda$ compose cleanly with excess $e$. If $A$ is a Fourier integral operator associated to $\mathcal C$ and $B$ a Lagrangian distribution associated to $\Lambda$, then $A \circ B$ is a Lagrangian distribution associated to $\mathcal C \circ \Lambda$ with order
\[
	\ord(A \circ B) = \ord A + \ord B + \frac{e}{2}.
\]
Furthermore, if $\sigma_A$, $\sigma_B$, and $\sigma_{A \circ B}$ are the half-density parts of the principal symbols of $A$, $B$, and $A \circ B$, respectively, then
\[
	\sigma_{A \circ B} = (2\pi i)^{-e/2} \sigma_A \circ \sigma_B.
\]
\end{theorem}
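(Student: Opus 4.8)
This is \cite[Theorem~5.4]{DG} (see also \cite[\S25.2]{HormanderIV}), so the plan is not to give a complete proof but to indicate the mechanism behind the excess shift $e/2$ and the universal constant $(2\pi i)^{-e/2}$, since it is the construction of $\sigma_A \circ \sigma_B$ — not the theorem itself — that is used in the body of the paper.

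First I would pass to local oscillatory-integral representations. Near a point of $\mathcal C \circ \Lambda$, write the Schwartz kernel of $A$ as $\int e^{i\phi(x,y,\theta)} a(x,y,\theta)\, d\theta$ for a nondegenerate phase $\phi$ in $\theta \in \R^{N_1}$ parametrizing the twist $\mathcal C'$, and $B$ as $\int e^{i\psi(y,\sigma)} b(y,\sigma)\, d\sigma$ for a nondegenerate phase $\psi$ in $\sigma \in \R^{N_2}$ parametrizing $\Lambda$. Composing and integrating in $y$ realizes $A \circ B$ as an oscillatory integral
\[
	(A\circ B)(x) = \iiint e^{i(\phi(x,y,\theta)+\psi(y,\sigma))}\, a(x,y,\theta)\, b(y,\sigma)\, dy\, d\theta\, d\sigma,
\]
with phase $\Phi(x;y,\theta,\sigma) = \phi(x,y,\theta) + \psi(y,\sigma)$ and fiber variables $(y,\theta,\sigma) \in \R^{\dim Y + N_1 + N_2}$.

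The heart of the argument is to show that the clean-composition hypothesis on $(\mathcal C,\Lambda)$ is exactly the statement that $\Phi$ is a clean phase function of excess $e$. One checks that the critical set $C_\Phi = \{\Phi'_{y,\theta,\sigma} = 0\}$ maps onto the fiber product $F$ of \eqref{F fiber product}: $\Phi'_\theta = 0$ places the associated point on $\mathcal C$, $\Phi'_\sigma = 0$ places it on $\Lambda$, and $\Phi'_y = \phi'_y + \psi'_y = 0$ is precisely the matching condition cutting out $F$. Definition \ref{def clean composition} — that $T_{(a;b)}F$ be the fiber product of the tangent spaces — then translates into the assertion that $d\Phi'_{y,\theta,\sigma}$ has constant corank $e$ along $C_\Phi$, i.e. into cleanness of $\Phi$ with excess $e$. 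I expect this translation to be the main obstacle: it requires relating the second-order jet of $\Phi$ along $C_\Phi$ to the tangent-space picture of Definition \ref{def clean composition}, and this is where all the care is needed. Everything downstream is bookkeeping.

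Granting the cleanness, the order identity $\ord(A\circ B) = \ord A + \ord B + e/2$ falls out of the general formula for the order of a Lagrangian distribution attached to a clean phase function of excess $e$, once one counts the $N_1 + N_2 + \dim Y$ fiber variables of $\Phi$ against $\dim X$ in the normalization of \cite{HormanderIV}; the only new feature relative to the transversal case is the term $e/2$, which is exactly the discrepancy between $\dim C_\Phi$ and $\dim(\mathcal C \circ \Lambda)$. For the symbol I would invoke the clean (partial) stationary phase lemma: the Hessian of $\Phi'_{y,\theta,\sigma}$ is invertible along the $(\dim Y + N_1 + N_2 - e)$ non-excess fiber directions, and integrating those out produces a power of $2\pi$, a Hessian-signature phase, and an integral of the top-order amplitude over the $e$-dimensional excess fiber $E_a$ against the induced Leray density. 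It then remains to verify that this integral coincides with $\sigma_A \circ \sigma_B$ as built in Appendix \ref{SYMBOL CALCULUS} from the exact sequences $0 \to E_a \to T_{(a;b)}F \to T_a(\mathcal C\circ\Lambda) \to 0$ and $0 \to T_{(a;b)}F \to T_{(a;b)}(\mathcal C\times\Lambda) \overset{\beta}{\longrightarrow} T_bT^*Y \to \coker\beta \to 0$, together with the symplectic pairing of $E_a$ with $\coker\beta$ — a finite-dimensional linear-algebra identity matching Hessian determinants to the density isomorphisms of Appendix \ref{LINEAR DENSITIES}. Collecting the $2\pi$-factors from Fourier inversion and from the partial stationary phase, and the $i$-factors from the signature, yields the constant $(2\pi i)^{-e/2}$, completing the sketch.
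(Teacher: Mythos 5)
The paper does not prove this theorem at all — it is quoted verbatim from Duistermaat--Guillemin \cite[Theorem 5.4]{DG} for reference, with the surrounding appendix devoted only to unwinding the definition of $\sigma_A \circ \sigma_B$ via the exact-sequence/density formalism. You also defer to \cite{DG} and \cite{HormanderIV}, so your proposal agrees with the paper on the essential point. The additional sketch you give (local oscillatory-integral representations, cleanness of the composite phase $\Phi = \phi + \psi$ with excess $e$, order count from the clean-phase normalization, and partial stationary phase over the non-excess fiber directions producing the integral over $E_a$ and the $(2\pi i)^{-e/2}$ factor) is a correct outline of the standard argument in those sources, though the matching of the Hessian determinants to the density isomorphisms of Appendix \ref{LINEAR DENSITIES}, and the careful bookkeeping of the $2\pi$- and $i$-factors, are stated rather than carried out — which is appropriate since the paper itself leaves all of this to \cite{DG}.
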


In what follows, we will review the canonical isomorphism
\begin{equation}\label{canonical isomorphism}
	|T_{(a;b)} \mathcal C|^{1/2} \otimes |T_b \Lambda|^{1/2} \simeq |T_b E_a| \otimes |T_a \mathcal C \circ \Lambda|^{1/2}
\end{equation}
as described in \cite{DG}. This will associate $\sigma_A \otimes \sigma_B$ in the space on the left with an object $\sigma_A \boxtimes \sigma_B$ in the space on the right. This object can be integrated over the excess fibers to obtain a half-density on $\mathcal C \circ \Lambda$. Indeed, we have
\begin{equation} \label{integral over excess fibers}
	\sigma_A \circ \sigma_B(a) = \int_{E_a} \sigma_A \boxtimes \sigma_B.
\end{equation}
Since we will only be considering $Y$ compact, $E_a$ is compact, and the integral is finite.

Now we describe how \eqref{canonical isomorphism} is obtained. We will be working exclusively in the linear category, so we can ease a burden on the notation by replacing some letters. We will write $\mathcal C$, $\Lambda$, $F$ and $\mathcal C \circ \Lambda$ to denote their respective tangent spaces, we will write $E$ to denote $T_b E_a$, and write $Y$ to denote the symplectic tangent space $T_b \dot T^* Y$.

Let $\beta : \mathcal C \times \Lambda \to Y$ be the map $\beta((a';b'),c') = b' - c'$. Then, we have an exact sequence
\begin{equation*}\label{long exact sequence}
	0 \longrightarrow F \longrightarrow \mathcal C \times \Lambda  \overset{\beta}{\longrightarrow} Y \longrightarrow \coker \beta \longrightarrow 0
\end{equation*}
which, as described in Appendix \ref{DENSITIES}, induces a linear isomorphism
\[
	|\mathcal C \times \Lambda|^{1/2} \simeq |F|^{1/2} \otimes |Y|^{1/2} \otimes |\coker \beta|^{-1/2}.
\]
Note, $Y$ is symplectic and so comes already equipped with the symplectic half-density. By fixing this element in $|Y|^{1/2}$, we have
\begin{equation}\label{clean calculus eq 1}
	|\mathcal C \times \Lambda|^{1/2} \simeq |F|^{1/2} \otimes |\coker \beta|^{-1/2}.
\end{equation}

As proved in \cite{DG}, $E$ is the symplectic orthogonal subspace to the image of $\beta$ in $Y$, and hence the symplectic form is well-defined and nondegenerate as a bilinear form on $\coker \beta \times E$. Also by Appendix \ref{DENSITIES}, this induces an isomorphism
\begin{equation}\label{clean calculus eq 2}
	|\coker \beta|^{-1/2} \simeq |E|^{1/2}.
\end{equation}
Thirdly, if $\alpha : F \to \mathcal C \circ \Lambda$ denotes projection onto the first factor, then we have another exact sequence
\[
	0 \longrightarrow E \longrightarrow F \overset{\alpha}{\longrightarrow} \mathcal C \circ \Lambda \longrightarrow 0,
\]
which induces another linear isomorphism
\begin{equation}\label{clean calculus eq 3}
	|F|^{1/2} \simeq |E|^{1/2} \otimes |\mathcal C \circ \Lambda|^{1/2}.
\end{equation}
Stringing together \eqref{clean calculus eq 1}, \eqref{clean calculus eq 2}, and \eqref{clean calculus eq 3}, we obtain
\[
	|\mathcal C \times \Lambda|^{1/2} \simeq |F|^{1/2} \otimes |\coker \beta|^{-1/2} \simeq (|E|^{1/2} \otimes |\mathcal C \circ \Lambda|^{1/2}) \otimes |E|^{1/2} \simeq |E| \otimes |\mathcal C \circ \Lambda|^{1/2}
\]
as desired.

\subsection{For transverse compositions} We review the symbol calculus for where $\mathcal C$ and $\Lambda$ compose transversally. This is simply a special case of clean composition when $e = 0$, but came first historically speaking. The calculus of transversally composing Fourier integral operators is treated in Duistermaat's book \cite{DuistermaatFIOs} and in H\"ormander's paper \cite{HormanderPaper} and its sequel with Duistermaat \cite{HormanderPaper2}. Again, a very thorough treatment of both clean and transversal cases can be found in \cite{HormanderIV}.

$\mathcal C$ and $\Lambda$ are said to \emph{compose transversally} if the intersection
\[
	F = \{(a;b) \in \mathcal C : b \in \Lambda \} = \mathcal C \cap (T^*X \times \Lambda)
\]
is transversal, meaning that
\[
	T_{(a;b)} \mathcal C + T_{(a;b)}(T^*X \times \Lambda) = T_{(a;b)} T^*(X \times Y).
\]
As a consequence, the intersection $F$ is automatically smooth and has tangent space
\[
	T_{(a;b)} F = T_{(a;b)} \mathcal C \cap (T_aT^*X \times T_b\Lambda) = \{(a';b') \in T_{(a;b)} \mathcal C : b' \in T_b \Lambda \},
\]
and hence the composition is clean. We also have
\begin{align*}
	\dim T_{(a;b)} F &= \dim \mathcal C + \dim (T^*X \times \Lambda) - \dim(T^*X \times T^*Y) \\
	&= \dim \mathcal C + \dim \Lambda - \dim T^*Y,
\end{align*}
and hence the excess $e$ of the composition is $0$. This significantly simplifies the composition calculus above.

First, the excess fibers are $0$-dimensional and compact, meaning each is a finite set. The densities $|T_b E_a|$ in \eqref{canonical isomorphism} are trivialized by the remark after Definition \ref{def alpha density}, and hence the object $\sigma_A \boxtimes \sigma_B$ belongs to $|T_a \mathcal C \circ \Lambda|^{1/2}$. The integral \eqref{integral over excess fibers} reduces to a finite sum. The procedure for obtaining $\sigma_A \boxtimes \sigma_B$ also simplifies. First, $\coker \beta$ is trivial, and hence the exact sequence involving $\beta$ simplifies to
\[
	0 \longrightarrow F \longrightarrow \mathcal C \times \Lambda \overset \beta \longrightarrow Y \longrightarrow 0.
\]
Again by fixing the symplectic half-density on $Y$, we have
\[
	|\mathcal C \times \Lambda|^{1/2} \simeq |F|^{1/2} \otimes |Y|^{1/2} \simeq |F|^{1/2}
\]
by the procedure in Appendix \ref{DENSITIES}. The exact sequence involving $\alpha$ then describes an isomorphism
\[
	0 \longrightarrow F \overset \alpha \longrightarrow \mathcal C \circ \Lambda \longrightarrow 0,
\]
which yields the identification
\[
	|F|^{1/2} \simeq |\mathcal C \circ \Lambda|^{1/2}.
\]
Putting these together yields a simplified \eqref{canonical isomorphism},
\begin{equation}\label{canonical isomorphism transversal}
	|\mathcal C \times \Lambda|^{1/2} \simeq |\mathcal C \circ \Lambda|^{1/2}.
\end{equation}


\bibliographystyle{plain}
\bibliography{references}

\end{document}